\author{Thomas Baird}
\title{Moduli spaces of vector bundles over a real curve: $\Z/2$-Betti numbers}
\theoremstyle{plain}
\newtheorem{thm}{Theorem}[section]
\newtheorem{prop}[thm]{Proposition}
\newtheorem{lem}[thm]{Lemma}
\newtheorem{cor}[thm]{Corollary}
\theoremstyle{definition}
\newtheorem{definition}[equation]{Definition}
\newtheorem{remark}[equation]{Remark}
\theoremstyle{remark}
\numberwithin{equation}{section}
\newcommand{\gau}{\mathcal{G}}
\newcommand{\D}{\mathcal{D}}
\newcommand{\Z}{\mathbb{Z}}
\newcommand{\R}{\mathbb{R}}
\newcommand{\C}{\mathbb{C}}
\newcommand{\rk}{\mathrm{rank}}
\renewcommand{\phi}{\varphi}
\newcommand{\im}{\mathrm{im}}
\newcommand{\M}{\mathcal{M}}
\DeclareMathSymbol{\sdp}{\mathbin}{AMSb}{"6F}
\newcommand{\G}{\mathcal{G}}
\newcommand{\E}{\mathcal{E}}
\newcommand{\F}{\mathcal{F}}
\newcommand\ca{\mathcal}
\newcommand{\ignore}[1]{}
\newcommand{\rank}{\mathrm{rank}}
\begin{document}
\baselineskip=16pt

\maketitle

\begin{abstract}
Moduli spaces of real bundles over a real curve arise naturally as Lagrangian submanifolds of the moduli space of semi-stable bundles over a complex curve. In this paper, we adapt the methods of Atiyah-Bott's ``Yang-Mills over a Riemann Surface'' to compute $\Z/2$-Betti numbers of these spaces.  MR 32L05, 14P25.
\end{abstract}

\section{Introduction}\label{intro}
\subsection{Background}
A \emph{real curve} $(\Sigma, \sigma)$ is a closed, complex $1$-manifold $\Sigma = (\Sigma, J)$ equipped with a $C^{\infty}$-map $$\sigma: \Sigma \rightarrow \Sigma$$ such that $\sigma^2 = Id_{\Sigma}$ and $d\sigma \circ J = - J \circ d\sigma$ (we suppress $J$ in our notation throughout). The map $\sigma$ is called the \emph{anti-holomorphic involution} and the fixed point set $\Sigma^{\sigma}$ is called the set of \emph{real points} of $(\Sigma, \sigma)$. 

Given relatively prime integers $r$ and $d$ with $r\geq 1$,  there exists a non-singular projective moduli space $M_{\Sigma}(r,d)$ classifying stable holomorphic bundles of rank $r$ and degree $d$ over the underlying complex curve $\Sigma$ (Mumford \cite{mumford2004projective}).  The anti-holomorphic involution $\sigma$ induces an anti-holomorphic involution on $M_{\Sigma}(r,d)$ sending (the isomorphism class of) the holomorphic bundle $\E \rightarrow \Sigma$ to the bundle $$\sigma(\E) = \overline{\sigma^*\E}.$$ The set of fixed points $M_{\Sigma}(r,d)^{\sigma}$ is a real submanifold that is Lagrangian with respect to a natural Kaehler structure on $M_{\Sigma}(r,d)$. The main result of this paper is a recursive formula for the $\Z_2$-Betti numbers of the path components of $M_{\Sigma}(r,d)^{\sigma}$.

The case of rank $r=1$ was considered by Gross-Harris \cite{gross1981real}. Recall that $$M_{\Sigma}(1,d) = Pic_d(\Sigma)$$ is homeomorphic to a compact torus $(S^1)^{2g}$, where $g$ is the genus of $\Sigma$. For a divisor class $[D] \in Pic_d(\Sigma)$, the involution satisfies $\sigma([D]) = [\sigma(D)].$ The fixed point set $Pic(\Sigma)^{\sigma}$ is a disjoint union of Lagrangian tori each diffeomorphic to $(S^1)^g$.

The general rank case was studied in independent papers by Biswas-Huisman-Hurtubise \cite{biswas2009moduli} and Schaffhauser \cite{schaffhauser2009moduli}. They proved that the fixed points lying in $M_{\Sigma}(r,d)^{\sigma}$ correspond to bundles admitting an antiholomorphic lift

$$\xymatrix{ \E \ar[r]^{\tau} \ar[d] & \E \ar[d] \\
            \Sigma \ar[r]^{\sigma} & \Sigma } $$ 
such that either 
\begin{itemize}
	\item[(a)] $\tau^2 = Id_\E$,~~~~~~in which case we call $(\E,\tau)$ a \emph{real bundle} over $(\Sigma,\sigma)$, or 
	\item[(b)] $\tau^2 = -Id_\E$,~~~~in which case we call $(\E,\tau)$ a \emph{quaterionic bundle} over $(\Sigma,\sigma)$.
\end{itemize}

The axioms defining real and quaterionic bundles make sense for $C^{\infty}$-bundles $E\rightarrow \Sigma$ as well as for holomorphic ones. The authors \cite{biswas2009moduli} and \cite{schaffhauser2009moduli} proved that the path components of  $M_{\Sigma}(r,d)^{\sigma}$ are classified by isomorphism types of real and quaterionic $C^{\infty}$-bundles.

Given a real curve $(\Sigma, \sigma)$, the set of real points $\Sigma^{\sigma}$ is a finite union of circles. If $(E,\tau)\rightarrow (\Sigma, \sigma)$ is a real $C^{\infty}$-bundle, then the fixed point set $E^{\tau}$ forms a $\R^r$-bundle over $\Sigma^{\sigma}$.  We paraphrase Prop 4.1 and Prop 4.2 of \cite{biswas2009moduli}.

\begin{thm}\label{arecbu}
Real $C^{\infty}$-vector bundles $(E,\tau)$ over a real curve $(\Sigma, \sigma)$ are classified up to isomorphism by rank $r$, degree $d$ and Stieffel-Whitney class $w_1(E^{\tau}) \in H^1(\Sigma^{\sigma};\Z_2)$ subject to the condition that $$d \equiv w_1(E^{\tau})(\Sigma^{\sigma}) ~mod~2. $$ 
Quaternionic vector bundles are classified by rank $r$ and degree $d$, subject to the condition
\begin{equation}\label{quatcond}
d \equiv r(g-1) ~mod~2
\end{equation}
and that $\Sigma^{\sigma} = \emptyset$ if $r$ is odd.
\end{thm}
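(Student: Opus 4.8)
The plan is to treat the real and quaternionic cases separately, in each case reducing the classification to a bundle over the real locus (for real bundles) or to a classical extension/obstruction argument (for quaternionic bundles). For the real case: a real $C^\infty$ bundle $(E,\tau)$ over $(\Sigma,\sigma)$ is, by descent, the same data as a $C^\infty$ vector bundle over the topological quotient orbifold $\Sigma/\sigma$ — more precisely, a real bundle on the ``doubled'' picture is determined by its restriction data over a fundamental domain $\Sigma_+$ (a surface with boundary $\Sigma^\sigma$, or possibly disconnected boundary) together with the gluing $\tau$ along the boundary. First I would recall that $\Sigma \setminus \Sigma^\sigma$ either is connected (when $\Sigma/\sigma$ is non-orientable, the ``non-dividing'' case) or has two components swapped by $\sigma$ (the ``dividing'' case); either way $\Sigma$ is obtained from a surface $\Sigma_+$ with boundary $\partial\Sigma_+ = \Sigma^\sigma$ by doubling. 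A real structure $\tau$ on $E$ then corresponds to an arbitrary $C^\infty$ complex bundle $E_+ \to \Sigma_+$ together with a real structure on $E_+|_{\partial\Sigma_+}$, i.e. an $\mathbb{R}^r$-subbundle $F \to \Sigma^\sigma$ with $E_+|_{\partial\Sigma_+} = F\otimes_\mathbb{R}\mathbb{C}$. Complex bundles on a surface with boundary are classified by rank alone (the degree is not yet pinned down until we cap off), and $\mathbb{R}^r$-bundles over a disjoint union of circles are classified by $w_1 \in H^1(\Sigma^\sigma;\Z_2)$; the only subtlety is how the degree $d$ of the closed bundle $E\to\Sigma$ is recovered from this boundary data.

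The key computation is therefore the congruence $d \equiv w_1(E^\tau)(\Sigma^\sigma) \bmod 2$. I would prove this by a Chern–Weil / index argument: choose a $\tau$-invariant Hermitian metric and a $\tau$-compatible connection $A$ on $E$; then $\tau$-invariance forces $\sigma^* c_1(A) = -c_1(A)$ as a $2$-form, so $d = \int_\Sigma c_1(A) = \int_{\Sigma_+} c_1(A) - \int_{\sigma(\Sigma_+)} c_1(A)$, and integrating over the fundamental domain and applying Stokes reduces $d \bmod 2$ to a boundary term along $\Sigma^\sigma$ detecting exactly $w_1(E^\tau)$ (equivalently: the mod-$2$ degree is the obstruction to extending the real sub-bundle, and over each circle of $\Sigma^\sigma$ the relevant $O(r)$-bundle is non-orientable iff the local contribution to the degree is odd). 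Conversely, given any rank $r$, any $w_1$, and any $d$ satisfying the congruence, I would construct $(E,\tau)$ explicitly: build $F\to\Sigma^\sigma$ with the prescribed $w_1$, set $E_+|_{\partial} = F\otimes\mathbb{C}$, extend over $\Sigma_+$ with the correct relative Euler number so that the doubled bundle has degree $d$ (possible precisely because of the mod-$2$ matching), and double $\tau$. For the uniqueness half, two real bundles with the same $(r,d,w_1)$ have isomorphic $E_+$ (same rank) and isomorphic boundary real structures (same $w_1$ on each circle, once we know circle components of $\Sigma^\sigma$ are classified by $\Z_2$), and the space of real isomorphisms is connected enough to glue — I would cite or reprove that $\pi_0$ of the real gauge group surjects appropriately.

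For the quaternionic case the real locus plays no role (indeed when $r$ is odd a quaternionic structure on a rank-$r$ bundle over a circle cannot exist, forcing $\Sigma^\sigma=\emptyset$, which I would note first). Here $\tau$ with $\tau^2 = -\mathrm{Id}$ makes $E$ into a bundle of modules over the quaternions when $r$ is even, and the classification reduces to: rank and degree are the only invariants, subject to $d \equiv r(g-1)\bmod 2$. I would derive this parity constraint from the fact that a quaternionic structure is a lift of $\sigma$ with $\tau^2=-1$, and the sign obstruction to the existence of such a lift on a bundle of given degree is governed by a Stiefel–Whitney-type class: concretely, $\sigma$ acts on $\Sigma$ reversing orientation, and the mod-$2$ reduction of $d - r(g-1)$ is exactly the obstruction (this is the real/quaternionic analogue of the parity of the degree of a spin or theta-characteristic bundle, $g-1 = \deg K_\Sigma /2$ appearing because $K_\Sigma$ carries a canonical quaternionic or real structure depending on the topology of $\sigma$). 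Existence when the congruence holds is again an explicit construction (take a direct sum of a fixed rank-$2$ quaternionic line bundle of appropriate degree with complex bundles of the form $L\oplus\sigma(L)$), and uniqueness follows by a gauge-group connectivity argument parallel to the real case.

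\textbf{Main obstacle.} The routine parts are the existence constructions and the topological classification of $\mathbb{R}^r$- and $\mathbb{C}^r$-bundles over $1$- and $2$-manifolds with boundary; the genuinely delicate point is pinning down the exact parity relations — $d \equiv w_1(E^\tau)(\Sigma^\sigma)$ in the real case and $d \equiv r(g-1)$ in the quaternionic case — and showing they are the \emph{only} obstructions. I expect the cleanest route is the index/Chern–Weil computation on a fundamental domain with boundary $\Sigma^\sigma$, being careful about the orientation-reversing nature of $\sigma$ and about which components of $\Sigma^\sigma$ contribute; alternatively one can cite Propositions 4.1–4.2 of \cite{biswas2009moduli} directly, since the statement is paraphrased from there, and spend the effort instead on the later homotopy-theoretic inputs this paper actually needs.
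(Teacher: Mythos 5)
The first thing to say is that the paper contains no proof of this statement at all: Theorem \ref{arecbu} is introduced with the words ``We paraphrase Prop 4.1 and Prop 4.2 of \cite{biswas2009moduli}'', so the paper's own ``proof'' is exactly the citation you offer as a fallback in your last sentence. For the purposes of this paper, citing Biswas--Huisman--Hurtubise is the intended argument, and your proposal's closing option coincides with it.

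Judged as an independent proof, however, your sketch has genuine gaps, and they sit precisely at the points the theorem is about. (i) The opening descent claim is false as stated: since $\tau$ is antilinear, $(E,\tau)$ does not descend to a complex vector bundle on $\Sigma/\sigma$; what descends is a bundle with structure group a $\Z/2$-extension of $U_r$ (equivalently one must keep the antilinear gluing data), and this matters because the naive descent would misclassify the case $\Sigma^{\sigma}=\emptyset$. (ii) Your fundamental-domain picture assumes $\partial\Sigma_+=\Sigma^{\sigma}$, which holds only in the dividing case. In the non-dividing case the domain has extra boundary circles along which $\sigma$ acts freely (compare the paper's model in Section \ref{sect6}, where only $a$ of the $n$ boundary circles are real and the remaining ones are glued by $\theta\mapsto\theta+\pi$); your Stokes/boundary-term argument must show these circles contribute trivially to $d$ mod $2$, and the case $\Sigma^{\sigma}=\emptyset$ -- where the theorem still asserts something nontrivial, namely that $d$ is even and $(r,d)$ classifies -- is not covered by your framework at all. (iii) The two congruences, $d\equiv w_1(E^{\tau})(\Sigma^{\sigma})$ and $d\equiv r(g-1)$, are exactly what you defer: the first is announced as a Chern--Weil computation you do not carry out, and the second is justified only by an analogy with theta-characteristics, not an argument; likewise ``rank-$2$ quaternionic line bundle'' needs to be replaced by an actual construction (a genuine quaternionic line bundle exists only when $\Sigma^{\sigma}=\emptyset$). (iv) Uniqueness rests on an unproved connectivity/surjectivity claim for $\pi_0$ of the real gauge group. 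So the proposal is a plausible outline of the kind of proof found in \cite{biswas2009moduli}, but the delicate steps are all left open; since the paper itself only cites that reference, the citation is the appropriate resolution here.
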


\begin{remark}\label{needremakr}
Condition (\ref{quatcond}) implies that a real curve $(\Sigma, \sigma)$ admits a quaternionic vector bundle of rank coprime rank and degree if and only if it admits a quaternionic line bundle.
\end{remark}

The strategy of the current paper (pursued independently by Liu-Schaffhauser \cite{liu2011yang}) is to adapt the methods of Atiyah-Bott \cite{ab2} to compute the $\Z/2$-Betti numbers of path components of $M_{\Sigma}(r,d)^{\sigma}$. We outline this approach in the following section.

\subsection{The Atiyah-Bott argument}

The \emph{slope} of a holomorphic vector bundle $\E\rightarrow \Sigma$ is the ratio of the degree to the rank, $$\mu(\E) := \deg(\E)/\rk(\E) =\deg(E)/\rk(E)=d/r .$$
The bundle $\E$ is called \emph{semi-stable} (resp. \emph{stable}) if for every proper subbundle $\F\subset \E$, we have $\mu(\F) \leq \mu(\E)$ (resp. $\mu(\F) < \mu(\E)$). It was proven by Harder-Narasimhan \cite{hn} that over a Riemann surface, every bundle $\E$ admits a \emph{canonical} filtration by subbundles $$\{0\}= \E_0 \subset \E_1\subset ... \subset \E_n = \E$$ such that $\mu(\E_i) >\mu(\E_{i+1})$ and $\E_{i}/\E_{i-1}$ is semi-stable. Let $r_i$ and $d_i$ be the rank and the degree of $\E_i/\E_{i-1}$. The sequence $((r_1,d_1),...,(r_n,d_n))$ is called the \emph{Harder-Narasimhan type} or \emph{HN-type} of $\E$.

Let $E\rightarrow \Sigma$ be a smooth $\C^r$-bundle of degree $d$, and let $C(r,d)$ be the \emph{space of holomorphic structures} on $E$. Choosing a basepoint in $C(r,d)$ determines a diffeomorphism  $$ C(r,d) \cong \Omega^{0,1}(\Sigma, End(E)),$$ which is a contractible complex, Banach manifold after appropriate Sobolev completion (\cite{ab2} section 14). The complex gauge group $\G_{\C}(r,d)$ acts on $C(r,d)$, and there is a natural bijection of sets 

$$C(r,d)/\gau_{\C}(r,d)  \stackrel{1:1}{\longleftrightarrow} \frac{\{ \text{holomorphic bundles of rank r and degree d over $\Sigma$}\}}{\text{isomorphism}}  $$ 
Decomposing $C(r,d)$ according to Harder-Narasimhan types $\lambda = ((r_1,d_1),...,(r_n,d_n))$
produces an equivariant stratification\footnote{Stratification (\ref{laoecbulcob}) may also be interpreted as the Morse theoretic stable manifolds for the Yang-Mills functional. This point of view will not be used in this paper.}
\begin{equation}\label{laoecbulcob} C(r,d)  = \bigcup_{\lambda} C_{\lambda}(r,d) \end{equation}
into locally closed, finite codimension complex submanifolds, indexed by $\lambda$ satisfy $r_1+...+r_n = r$, $d_1+...+d_n = d$ and $d_1/r_1 > ....> d_n/r_n$.  The semi-stable stratum $C_{ss}(r,d):= C_{((r,d))}(r,d)$ is dense and open, and we have a surjective map

	$$ C_{ss}(r,d)/\G_{\C}(r,d) \twoheadrightarrow M_{\Sigma}(r,d) $$
	which is a homeomorphism when $gcd(r,d)=1$. Atiyah and Bott \cite{ab2} prove that the stratification (\ref{laoecbulcob}) is \emph{equivariantly perfect} for any coefficient field. We take a moment to explain this result.

Given a topological group $G$ and a $G$-space $X$, the equivariant Poincar\'e series of $X$ is the generating function $$P_t^G(X) = \sum_{i=0}^{\infty} \dim (H^i_G(X)) t^i$$ where $H^*_G(X)$ is the Borel equivariant cohomology of $X$ over some fixed coefficient field. The equivariant perfection result of Atiyah and Bott states that

\begin{equation}\label{orbulabocelb0} P_t^{\G_{\C}(r,d)}(C(r,d)) = \sum_{\lambda} t^{2d_{\lambda}}P_t^{\G_{\C}(r,d)}(C_{\lambda}(r,d)) \end{equation}
where $d_{\lambda}$ is the complex codimension of $C_{\lambda}(r,d)$ in $C(r,d)$. In other words, up to degree shifts, the equivariant Betti numbers of $C(r,d)$ is simply the sum of those of the strata.  Because $C(r,d)$ is contractible, it follows that 
\begin{equation}\label{orbulabocelb1}P_t^{\G_{\C}(r,d)}(C(r,d)) = P_t(B\G_{\C}(r,d)). \end{equation}
Furthermore, for an unstable stratum $\lambda = ((r_1,d_1),...,(r_n,d_n))$, Atiyah and Bott demonstrate that  
\begin{equation}\label{orbulabocelb2} P_t^{\G_{\C}(r,d)}(C_{\lambda}(r,d)) = \prod_{i=1}^n P_t^{\G_{\C}(r_i,d_i)}(C_{ss}(r_i,d_i)).
\end{equation}  Rearranging (\ref{orbulabocelb0}) and substituting (\ref{orbulabocelb1}) and (\ref{orbulabocelb2}) yields the formula
\begin{equation}\label{alroecbualrcoe}
P_t^{\G_{\C}(r,d)}(C_{ss}(r,d)) = P_t(B\G_{\C}(r,d)) - \sum_{\lambda \neq (r,d)} t^{2d_{\lambda}} \prod_{i=1}^n P_t^{\G_{\C}(r_i,d_i)}(C_{ss}(r_i,d_i))
\end{equation} 	
which recursively expresses $P_t^{\G_{\C}(r,d)}(C_{ss}(r,d))$ in terms of the lower rank cases $P_t^{\G_{\C}(r_i,d_i)}(C_{ss}(r_i,d_i))$. 
Finally, if $gcd(r,d) =1$ then $$P_t(M_{\Sigma}(r,d)) =  (1-t^2)P_t^{\G_{\C}(r,d)}(C_{ss}(r,d)). $$ The correction factor $(1-t^2) =  1/P_t(B\C^*)$ is due to the constant scalar action by $\C^*$ acting trivially on $C(r,d)$.

A parallel story can holds for real/quaternionic vector bundles. Given a real/quaternionic structure $\tau$ on a smooth $\C^{r}$-bundle of degree $d$, define 

\begin{itemize}
\item $C(r,d,\tau) \subset C(r,d)$,~~the \emph{space of real/quaternionic holormophic structures}
\item $\G_{\C}(r,d,\tau)\subset \G_{\C}(r,d)$,~~the \emph{real/quaternionic gauge group}
\end{itemize}
to be those operators commuting with $\tau$. Equivalently, $\tau$ determines involutions on $C(r,d)$ and $\G_{\C}(r,d)$ for which $C(r,d,\tau)=C(r,d)^{\tau}$ and $\G_{\C}(r,d,\tau) = \G_{\C}(r,d)^{\tau}$ are the fixed points. Define the \emph{moduli space of real/quaternionic semi-stable bundles} of type $\tau$,
$$ M(r,d,\tau) = M_{(\Sigma, \sigma)}(r,d,\tau) := C_{ss}(r,d,\tau)/ \G_{\C}(r,d,\tau) .$$
According to Schaffhauser \cite{schaffhauser2010real}, if $gcd(r,d) =1$, then we may identify $ M(r,d,\tau)$ with a corresponding path component of the set of real points $M(r,d)^{\sigma}$. 

In the current paper, we adapt the Atiyah-Bott method to derive recursive formulas for the $\Z/2$-Betti numbers of $M(r,d,\tau)$. We will focus on moduli spaces of real bundles, because quaternionic case reduces to the real case by the following remark. 

\begin{remark}\label{thanks} If $M(r,d,\tau)$ is a moduli space of quaternionic bundles with $gcd(r,d)=1$, then by Remark \ref{needremakr}, there exists a quaternionic line bundle $(L,\tau')$ of some degree $d'$. Tensor product by $(L,\tau')$ defines an isomorphism between $M(r,d,\tau)$ and the moduli space of real bundles $M(r, d + r d', \tau \otimes \tau')$ which also has coprime rank and degree.
\end{remark}

\subsection{Summary}

In \S \ref{sect2}, we construct a stratification into locally closed, finite codimension submanifolds 
$$C(r,d,\tau) = \bigcup_{\lambda} C_{\lambda}(r,d,\tau)$$ indexed by \emph{real Harder-Narasimhan} types $\lambda = ((r_1,d_1,\tau_1),...,(r_n,d_n,\tau_n))$, and prove that the stratification satisfies the conditions necessary to apply the standard Morse theory arguments.

In \S \ref{sect3}, we show that the stratification is $\G_{\C}(r,d,\tau)$-equivariantly perfect for $\Z/2$-coefficients.   This implies a recursive formula
\begin{equation}\label{aolrcubalocre} P_t^{\G_{\C}(r,d,\tau)}(C_{ss}(r,d,\tau))  =  P_t(B\G_{\C}(r,d,\tau)) - \sum_{\lambda \neq (r,d,\tau)} t^{d_{\lambda}} \prod_{i=1}^n P_t^{\G_{\C}(r_i,d_i\tau_i)}(C_{ss}(r_i,d_i,\tau_i)).
\end{equation}

Sections \S \ref{sect4}, \ref{sect5} and \ref{sect6} are devoted to calculating the Poincar\'e series $P_t(B\G_{\C}(r,d,\tau))$ which is needed as input for the recursive formula (\ref{aolrcubalocre}), and this calculation takes up the bulk of the paper. The calculations involve Eilenberg-Moore spectral sequences, which are reviewed in Appendix \ref{EM spec seq}. We find it convenient to work instead with the subgroup of unitary gauge transformations $ \G(r,d,\tau) \subseteq \G_{\C}(r,d,\tau)$, which includes as a deformation retract. 

In \S \ref{sect7} we prove that if gcd($r$,$d$) = 1 then $$P_t(M_{\Sigma}(r,d,\tau)) =  (1-t)P_t^{\G_{\C}(r,d)}(C_{ss}(r,d,\tau)), $$
where now the factor $(1-t) = (P_t(B\R^*))^{-1}$ corrects for a trivial scalar action by $\R^*$ on $C(r,d,\tau)$. Combined with the recursive formula (\ref{aolrcubalocre}) this allows a calculation of $P_t(M_{\Sigma}(r,d,\tau))$, and we present explicit formulas for ranks $r=1,2$ and $3$.

Throughout the paper, we make frequent reference to \cite{ab2} and we recommend that readers have a copy close at hand.

This paper covers largely the same ground as the independent paper by Liu-Schaffhauser \cite{liu2011yang}. The biggest difference in methods is that we use Eilenberg-Moore spectral sequences instead of Serre spectral sequences to compute the Poincare series $P_t(B\G_{\C}(r,d,\tau))$. Their paper also considers more directly the case of quaternionic bundles and solves the recursion (\ref{aolrcubalocre}) to get closed formulas for the Poincar\'e series $P_t(M_{\Sigma}(r,d,\tau)) $.  

\textbf{Notation:} For $G$ a topological group, $X$ a $G$-space, we denote the homotopy quotient $X_{hG} = EG \times_G X$.  We denote holomorphic bundles by $\E$ and $\D$ and the underlying $C^{\infty}$ or topological bundles by $E$ and $D$.

\textbf{Acknowledgements:}
I want to thank Jacques Hurtubise, Melissa Liu, Florent Schaffhauser, and all the participants at the Los Andes conference for valuable discussions, Misha Kotchetov for advice about Hopf algebras, and to the referee who made several useful recommendations including Remark \ref{thanks}. This research was supported by an NSERC Discovery grant.

\section{The Harder-Narasimhan stratification}\label{sect2}

\subsection{Harder-Narasimhan over complex curves}

We summarize the relevant material from Section 7 of \cite{ab2} that has not already been explain in \S \ref{intro}.

Let $\Sigma$ be a Riemann surface and $E \rightarrow \Sigma$ a smooth $\C^r$-bundle of degree $d$. Let $C(r,d) = C(E)$ denote the space of holomorphic structures on $E$ (under an appropriate Sobolev completion). For a given HN-type $\lambda = ( (r_1,d_1),...,(r_k,d_k))$, choose a $C^{\infty}$-splitting of $E = D_1 \oplus ... \oplus D_k$ where $(r_i, d_i)$ are the rank and degree of $D_i$ respectively. This determines an injective map 
\begin{equation}\label{loch}
	\prod_{i=1}^k C_{ss}(r_i,d_i) \hookrightarrow C_{\lambda}(r,d)
\end{equation}
that induces a homotopy equivalence of homotopy quotients 

\begin{equation}\label{loch2}
\prod_{i=1}^kC_{ss}(r_i,d_i)_{h\G_{\C}(r_i,d_i)} \cong C_{\lambda}(r,d)_{h\G_{\C}(r,d)},
\end{equation}
responsible for the equality of Poincar\'e series (\ref{orbulabocelb2}).

Each stratum $C_{\lambda}(r,d) \subseteq C(r,d)$ is a finite codimension submanifold with complex normal bundle $N_{\lambda} \rightarrow C_{\lambda}(r,d)$.  A holomorphic bundle  $\ca{E} \in \prod_{i=1}^k C_{ss}(r_i,d_i) \subseteq C_{\lambda}(r,d),$ decomposes as $ \E =  \D_1 \oplus ... \oplus \D_k$ and the normal bundle $N_{\lambda}$ of $C_{\lambda}(r,d)$ can be identified at $\E$ with 
\begin{equation}\label{lfd}
N_{\lambda, \E} \cong \bigoplus_{i<j} H^1(\Sigma, \D_i^*\otimes \D_j).
\end{equation}
The complex rank can be computed using Riemann-Roch and is given by the formula
\begin{equation}
d_{\lambda} := \rank_{\C} N_{\lambda} = \sum_{i<j} d_ir_j -d_jr_i +r_ir_j(g-1)	
\end{equation}
The points in the stratum $C_{\lambda}(r,d)$ are fixed by the subgroup  $G_{\lambda} \subset \G_{\C}(r,d)$ isomorphic to $(\C^*)^k$ that acts by scalar multiplication on the summands $E = D_1 \oplus ...\oplus D_k$.  However, $G_{\lambda}$ acts non-trivially on the normal bundle by $(t_1,...,t_k) \in G_{\lambda}$ multiplying the summand $H^1(\Sigma, \D_i^* \otimes \D_j)$ by the scalar $t_i^{-1} t_j$.

\subsubsection{Over $\C P^1$}\label{cp1complex}

For later use, we consider more explicitly the Harder-Narasimhan decomposition and the Atiyah-Bott formula in the special case $\Sigma = \C P^1$ where some simplifications occur.

By a result of Grothendieck \cite{grothendieck1957classification}, holomorphic bundles over $\C P^1$ are always isomorphic to a direct sum of line bundles. Consequently, every rank $r$ degree $d$ bundle must have the form $\mathcal{O}(k_1) \oplus ... \oplus \mathcal{O}(k_r)$ for some integers $k_1 \geq ...\geq k_r$ such that $k_1+...+k_r =d$. The corresponding stratum in $C(r,d)$ is a single $\G_{\C}(r,d)$-orbit with stabilizer isomorphic to $Aut( \mathcal{O}(k)^{\oplus r}) \cong GL_{r_1}(\C) \times ...\times GL_{r_n}(\C)$ where $r_1,...,r_n$ are the multiplicities of degrees occurring in the sequence $d_1 \geq ...\geq d_r$. The recursive formula (\ref{aolrcubalocre}) can be rewritten in this case as

\begin{equation}
P_t(B\G_{\C}(r,0)) = \sum_{\substack{k_1\geq ...\geq k_r \\
 k_1 +...+k_r =0}}   t^{2(\sum_{k_i>k_j} k_i -k_j -1 )} P_t( BAut( \oplus_{i=1}^r \mathcal{O}(k_i)))
\end{equation}

If any $k_i$ has absolute value greater than one, the index $2(\sum_{k_i>k_j} k_i -k_j -1 )$ is greater than $r$.  Consequently, in the stable limit $$B\G_{\C}(\infty, 0) := \lim_{r\rightarrow \infty} B\G_{\C}(r,0),$$ we only need to consider strata for which $|k_i| \leq 1$ for all $i$. In particular, we obtain the formula

\begin{eqnarray*} P_t(B\G_{\C}(\infty,0)) &=& \sum_{n=0}^{\infty} t^{2n^2} P_t(\lim_{r\rightarrow \infty} BAut(\mathcal{O}(1)^{\oplus n}\oplus \mathcal{O}^{\oplus r-2n}\oplus \mathcal{O}(-1)^{\oplus n})) \\
	&=& \sum_{n=0}^{\infty} t^{2n^2} P_t(BU_n)^2 P_t( \lim_{r\rightarrow \infty} BU_{r-2n})\\
	 &= &P_t(BU)\sum_{n=0}^{\infty} t^{2n^2} P_t(BU_n)^2 \end{eqnarray*}
Substituting known values on both sides of the equation produces the formula

\begin{equation}\label{darinking} \prod_{k=1}^{\infty}\frac{1}{ (1-t^{2k})^2} =  \Big( \prod_{k=1}^{\infty} \frac{1}{1-t^{2k}}\Big) \sum_{n=0}^{\infty}  t^{2n^2}\Big(\prod_{k=1}^n \frac{1}{(1-t^{2k})^2}\Big) . \end{equation}
Substituting $x = t^2$ and simplifying yields the formula
\begin{equation}\label{oeulracbo} \prod_{k=1}^{\infty}\frac{1}{ (1-x^{k})} =   \sum_{d=0}^{\infty} \frac{ x^{d^2}}{\prod_{k=1}^d (1-x^{k})^2} =   \sum_{d=0}^{\infty}\prod_{k=1}^d \frac{ x^{d}}{ (1-x^{k})^2}. \end{equation}

\begin{remark} Equation (\ref{oeulracbo}) also has a combinatorial proof. The left hand side of (\ref{oeulracbo}) is the generating function $\sum_{n=0}^{\infty} p(n) x^n$ where $p(n)$ counts partitions of $n$, or equivalently the number of Young diagrams of size $n$. The right hand side also counts partitions, where the $d$th term is the generating function counting Young diagrams containing a $d\times d$-square but no $(d+1)\times(d+1)$-square.
\end{remark}

\subsection{Harder-Narasimhan over real curves}\label{aorecbularcoj}

Let $M$ be a smooth manifold, possibly infinite dimensional and let $$ M  = \bigcup_{\lambda \in I} M_{\lambda}$$ be a partition of $M$ into locally closed, finite codimension submanifolds $M_{\lambda}$.  To apply the standard Morse-Bott arguments, the index set $I$ must admit a partial order $\leq$ satisfying the following properties  (see \cite{ab2} section 1).
\begin{enumerate}
\item For each $\lambda \in I$, the closure $ \overline{M_{\lambda}} \subseteq \bigcup_{\mu \geq \lambda} M_{\mu}$,
\item the complement of any finite subset of $I$ contains a finite number of minimal elements, and
\item for each integer $q$ there are only finitely many strata of codimension less than or equal to $q$.
\end{enumerate}
A stratification satisfying all of the above is said to satisfy the \emph{Morse package}.

Let $(E,\tau)$ denote a $C^{\infty}$-real\footnote{To simplify the exposition, we refer only to real bundles in this subsection.  The quaternionic case is almost identical}bundle over a real surface $(\Sigma,\sigma)$ of rank $r$ and degree $d$. Then $\tau$ induces an involution of $C(E)=C(r,d)$ and the set of fixed points $C(E)^{\tau} = C(E,\tau) = C(r,d,\tau)$ is an affine manifold modeled on $ \Omega^1( \Sigma, E)^{\tau}$.  Select $\E \in C(E, \tau)$. Because the involution $\tau$ respects the holomorphic structure of $\E$, it must also preserve the Harder-Narasimhan filtration $\E_0 \subset \E_1 \subset ... \subset \E_k =\E$. Consequently, the quotient bundles $\D_i = \E_i/\E_{i-1}$ are real  bundles. The list $((D_1,\tau_1),...,(D_k,\tau_k))$ of isomorphism types of $C^{\infty}$-real bundles is called the \emph{real Harder-Narasimhan type} of $(\E, \tau)$. 

\begin{prop}
 The affine manifold $C(r,d,\tau)$ admits a stratification into finite codimension, locally closed submanifolds

\begin{equation}\label{lcrchoelcruh}
 C(r,d, \tau) = \bigcup_{\lambda} C_{\lambda}(r,d, \tau)
 \end{equation}
indexed by real Harder-Narasimhan types $\lambda = ((D_1,\tau_1),...,(D_k,\tau_k))$ such that $(E,\tau) \cong (D_1\oplus...\oplus D_k, \tau_1\oplus ... \oplus \tau_k)  $ .  The stratification admits a partial order $\leq$ satisfying the Morse package.
\end{prop}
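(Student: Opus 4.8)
The plan is to reduce the real statement to the complex statement of Atiyah--Bott (summarized above) by equipping the $\tau$-fixed point set with the restricted structure and checking that the restricted stratification inherits everything it needs. First I would observe that every stratum $C_\lambda(r,d)\subset C(r,d)$ is $\tau$-invariant as a set: if $\E$ has complex Harder--Narasimhan type $((r_i,d_i))$, then since $\tau$ is an anti-holomorphic bundle map covering $\sigma$, the bundle $\tau(\E)=\overline{\sigma^*\E}$ has an induced HN filtration obtained by applying $\tau$ to that of $\E$, and uniqueness of the HN filtration forces the numerical type to be preserved. Hence the complex strata restrict to a partition $C(r,d,\tau)=\bigcup_\mu \big(C_\mu(r,d)\cap C(r,d,\tau)\big)$, and each such piece further decomposes according to the isomorphism type of the real/quaternionic structures $(\D_i,\tau_i)$ induced on the graded pieces $\D_i=\E_i/\E_{i-1}$; this refinement is exactly the indexing by real HN-types $\lambda=((D_1,\tau_1),\dots,(D_k,\tau_k))$ with $(E,\tau)\cong\bigoplus(D_i,\tau_i)$, and Theorem~\ref{arecbu} tells us this is a finite (for fixed codimension) discrete set of labels. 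One must check that the set of possible $(\D_i,\tau_i)$ really is constrained: the underlying complex type is fixed, and the real/quaternionic refinement is governed by Stiefel--Whitney data on $\Sigma^\sigma$, of which there are finitely many choices.

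Next I would establish that each $C_\lambda(r,d,\tau)$ is a locally closed submanifold of finite codimension. The cleanest route is to note that $C_\lambda(r,d,\tau)$ is an open subset (cut out by the discrete condition of a fixed real HN-type) of $C_\mu(r,d)\cap C(r,d)^\tau=C_\mu(r,d)^\tau$, where $\mu$ is the underlying complex type; since $C_\mu(r,d)$ is a $\tau$-invariant submanifold of the affine space $C(r,d)$ and $\tau$ acts on $C(r,d)$ with the model fixed space $\Omega^1(\Sigma,E)^\tau$ (equivalently $\Omega^{0,1}(\Sigma,\End E)^\tau$ after choice of basepoint), the fixed locus of a smooth involution on a Banach manifold is again a smooth submanifold, and its codimension inside $C(r,d)^\tau$ is the real dimension of the fixed subspace of $\tau$ acting on the normal space $N_{\mu,\E}\cong\bigoplus_{i<j}H^1(\Sigma,\D_i^*\otimes\D_j)$. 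Here $\tau$ acts anti-linearly, so the fixed space has real dimension equal to the complex dimension of $N_\mu$, giving $d_\lambda=\dim_\C N_\mu=\sum_{i<j}d_ir_j-d_jr_i+r_ir_j(g-1)$ — this is the reason the power of $t$ in the real recursion (\ref{aolrcubalocre}) is $t^{d_\lambda}$ rather than $t^{2d_\lambda}$. I would flag that a small amount of care is needed to confirm that the $\tau$-action on $C(r,d)$ is smooth in the Sobolev completion and genuinely restricts to the normal bundle $N_\mu$ compatibly with the identification (\ref{lfd}).

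For the partial order and the Morse package, I would define $\lambda\leq\lambda'$ on real HN-types by declaring it to hold exactly when the underlying complex types satisfy the Atiyah--Bott partial order $\mu\leq\mu'$ (so, loosely, when the convex-hull/``HN-polygon'' of $\mu'$ lies above that of $\mu$), with ties among real refinements of the same complex type left incomparable or broken arbitrarily — incomparability is harmless for the Morse package. Property (1), $\overline{C_\lambda(r,d,\tau)}\subseteq\bigcup_{\lambda'\geq\lambda}C_{\lambda'}(r,d,\tau)$, follows because closure commutes with intersecting by the closed set $C(r,d)^\tau$: $\overline{C_\lambda(r,d,\tau)}\subseteq\overline{C_\mu(r,d)}\cap C(r,d)^\tau\subseteq\big(\bigcup_{\mu'\geq\mu}C_{\mu'}(r,d)\big)\cap C(r,d)^\tau$, and each point of the right-hand side lies in some real stratum whose complex type $\mu'\geq\mu$, hence $\geq\lambda$ in our order. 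Properties (2) and (3) are inherited because the map $\lambda\mapsto\mu$ from real types to complex types has finite fibers in each bounded range of codimension (by Theorem~\ref{arecbu}) and the analogous finiteness statements hold downstairs for $C(r,d)$ by Atiyah--Bott; in particular the codimension $d_\lambda$ tends to infinity as the complex HN-polygon degenerates, so only finitely many $\lambda$ have $d_\lambda\leq q$.

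The main obstacle I expect is not the combinatorics of the partial order but the analytic point in the middle step: verifying rigorously that the involution $\tau$ acts smoothly on the Sobolev-completed $C(r,d)$ and preserves each $C_\mu(r,d)$ as a smooth submanifold, so that $C_\mu(r,d)^\tau$ is itself a Banach submanifold with normal bundle the $\tau$-fixed part of $N_\mu$. Atiyah--Bott set up the complex strata as orbits of a (semi-)algebraic gauge action and identify the normal bundle via $H^1$; one must check that every ingredient of that construction — the splitting $E=D_1\oplus\cdots\oplus D_k$, the map (\ref{loch}), the identification (\ref{lfd}) — can be chosen $\tau$-equivariantly. Concretely I would choose the $C^\infty$-splitting of $(E,\tau)$ into real/quaternionic sub-bundles $(D_i,\tau_i)$ (possible by Theorem~\ref{arecbu} applied to the graded pieces), which makes (\ref{loch}) $\tau$-equivariant and lets all the transversality/normal-bundle computations of Atiyah--Bott be carried out inside the fixed-point category verbatim, with real gauge group $\G_\C(r,d,\tau)$ in place of $\G_\C(r,d)$.
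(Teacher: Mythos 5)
Your overall strategy is the same as the paper's: restrict the Atiyah--Bott stratification to the fixed locus $C(r,d,\tau)=C(r,d)^\tau$, note that each complex stratum is $\tau$-invariant so the intersections $C_\mu(r,d)\cap C(r,d,\tau)$ inherit the submanifold structure and the Morse package (with the restricted partial order), and then refine by real HN-types. Your remarks on the normal bundle being the $\tau$-fixed part of $N_\mu$ (hence real codimension $d_\lambda$ rather than $2d_\lambda$) and on the partial order are consistent with what the paper does.

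However, there is a genuine gap at the one step that is actually the heart of the paper's proof. You assert that $C_\lambda(r,d,\tau)$ is ``an open subset (cut out by the discrete condition of a fixed real HN-type)'' of $C_\mu(r,d)^\tau$. That the set of possible labels is finite or discrete does not imply that the label function is locally constant on $C_\mu(r,d)\cap C(r,d,\tau)$; local constancy is precisely what must be proved, and you give no argument for it. The paper proves it as follows: given a path $\gamma$ in $C_\mu(r,d)\cap C(r,d,\tau)$, the Harder--Narasimhan filtrations $E_1(t)\subset\cdots\subset E$ vary continuously and are $\tau$-invariant, so each $E_i(t)$ assembles into a $\tau$-subbundle $F\subset E\times I$; applying the equivariant rigidity theorem of Palais--Stewart to the sphere bundle of $F$ shows that $E_i(0)$ and $E_i(1)$ are isomorphic as $\Z/2$-equivariant smooth bundles, hence the real HN-type is constant on path components and each $C_\lambda(r,d,\tau)$ is a union of path components of $C_\mu(r,d)\cap C(r,d,\tau)$. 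Your proposal needs either this rigidity argument or an explicit substitute (e.g.\ showing directly that the classifying invariants of Theorem \ref{arecbu} --- ranks, degrees, and $w_1$ of the fixed-point bundles $E_i(t)^\tau\to\Sigma^\sigma$ --- are locally constant along such a continuous $\tau$-equivariant family); as written, the refinement step is asserted rather than proved.
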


\begin{proof}

By results of Atiyah-Bott, the complex HN-stratification

\begin{equation}\label{osrcbicoa}
 C(r,d) =  \bigcup_{\mu} C_{\mu}(r,d)
 \end{equation}
satisfies the Morse package.  Consider the filtration induced on $C(r,d,\tau) \subset C(r,d)$ by intersecting with (\ref{osrcbicoa})

\begin{equation}\label{alcelurca}
C(r,d,\tau) = \bigcup_{\mu} (C_{\mu}(r,d) \cap C(r,d,\tau)).
\end{equation}
with the restricted partial order. Because  $C(r,d,\tau)$ is the fixed point set of a $\Z/2$-action preserving the stratification (\ref{osrcbicoa}), standard arguments from the theory of proper  group actions on manifolds tell us that (\ref{alcelurca}) inherits the Morse package. 

The decomposition (\ref{lcrchoelcruh}) is a refinement of (\ref{alcelurca}). Indeed for each complex HN-type $\mu$, we have a finite partition

$$ C_{\mu}(r,d) \cap C(r,d,\tau) = \bigcup_{f(\lambda) = \mu} C_{\lambda}(r,d,\tau)  $$
indexed by the real HN-types $\lambda$ that map to $\mu$ under the forgetful map $f$. Thus to complete the proof, it is enough to show that  each $C_{\lambda}(r,d,\tau) $ is a union of path-components of $ C_{\mu}(r,d) \cap C(r,d,\tau)$.  Let $\gamma: I \rightarrow (C_{\mu}(r,d)\cap C(r,d,\tau))$ be a path. Then for a fixed smooth real bundle $(E,\tau)$,  for each $t$ the holomorphic structure $\gamma(t)$ produces a continuously varying filtration of vector bundles  $E_1(t) \subset  E_2(t) ... \subset E$ preserved by $\tau$. Because the subbundle $E_i (t)$ varies continuously with $t$, we attain a $\tau$-subbundle
$$ F \subset \gamma^*E = E \times I $$
with $E_i(t) = F_t$. 
Applying the rigidity results of Palais-Stewart \cite{ps} to the sphere bundle of $F$, we find that $E_i(0)$ and $E_i(1)$ are isomorphic as $\Z/2$-equivariant smooth vector bundles.   Therefore $\gamma(0)$ and $\gamma(1)$ have the same real  HN-type.

\end{proof}

\begin{thm}\label{Aoeurcoler}
	For a given real HN-type $\lambda = ((r_1,d_1,\tau_1),...,(r_k,d_k, \tau_k))$, a choice of $C^{\infty}$-splitting of $(E,\tau) = (D_1 \oplus ... \oplus D_k, \tau_1 \oplus ...\oplus \tau_k)$ into real bundles determines a homotopy equivalence of homotopy quotients 
	\begin{equation}
	\prod_{i=1}^kC_{ss}(r_i,d_i, \tau_i)_{h\G_{\C}(r_i,d_i, \tau_i)} \cong C_{\lambda}(r,d,\tau)_{h\G_{\C}(r,d,\tau)}.
	\end{equation}
\end{thm}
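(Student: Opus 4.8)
The plan is to mimic the complex case (equations~(\ref{loch}) and~(\ref{loch2})) exactly, using the real structure as an extra layer of equivariance that restricts everything to $\Z/2$-fixed points. First I would fix the $C^\infty$-splitting $(E,\tau) = \bigoplus_{i=1}^k (D_i, \tau_i)$ and use it to build the analogue of the inclusion~(\ref{loch}). Concretely, a point of $\prod_i C_{ss}(r_i,d_i,\tau_i)$ is a tuple of real semi-stable holomorphic structures $(\E_1,\tau_1),\dots,(\E_k,\tau_k)$; sending this tuple to the holomorphic structure on $E$ determined by the upper-triangular extension data being zero (i.e. the split holomorphic structure $\bigoplus \E_i$) lands in $C_\lambda(r,d,\tau)$, because the split bundle has the prescribed real HN-type. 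This defines an embedding $j_\lambda \colon \prod_i C_{ss}(r_i,d_i,\tau_i) \hookrightarrow C_\lambda(r,d,\tau)$. Simultaneously, the block-diagonal subgroup $\prod_i \G_\C(r_i,d_i,\tau_i) \hookrightarrow \G_\C(r,d,\tau)$ acts compatibly, so $j_\lambda$ is equivariant and descends to a map of homotopy quotients.

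The core of the argument is to show this map of homotopy quotients is a homotopy equivalence. I would run the same deformation-retraction/fiber-bundle argument Atiyah--Bott use for~(\ref{loch2}), but carried out $\tau$-equivariantly. The stratum $C_\lambda(r,d)$ (before taking real points) retracts $\G_\C(r,d)$-equivariantly onto the sub-locus of split holomorphic structures, the retraction being implemented by the parabolic subgroup $P_\lambda \subset \G_\C(r,d)$ of gauge transformations preserving the filtration: one has a bundle $C_\lambda(r,d) \to \G_\C(r,d)/P_\lambda$ whose fiber deformation retracts onto $\prod_i C_{ss}(r_i,d_i)$, and $\G_\C(r,d)/P_\lambda$ is contractible (it is the space of filtrations of type $\lambda$ on $E$, an affine-bundle tower). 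All of these maps and homotopies are natural in $E$ and hence commute with the involution $\tau$. Taking $\tau$-fixed points, the parabolic becomes $P_\lambda^\tau = P_\lambda(r,d,\tau)$, the flag space becomes $\bigl(\G_\C(r,d)/P_\lambda\bigr)^\tau \cong \G_\C(r,d,\tau)/P_\lambda(r,d,\tau)$ (using that $H^1(\Z/2,\text{unipotent})$ vanishes for these pro-unipotent real groups, so every $\tau$-fixed filtration has a $\tau$-fixed splitting and the fixed-point functor is exact on the relevant fibrations), and this fixed flag space is still contractible because it is built from fixed points of affine spaces under affine involutions, which are again affine. The fiberwise retraction of $C_\lambda(r,d)$ onto $\prod_i C_{ss}(r_i,d_i)$ restricts on fixed points to a retraction of $C_\lambda(r,d,\tau)$ onto $\prod_i C_{ss}(r_i,d_i,\tau_i)$. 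Finally, since $P_\lambda(r,d,\tau)$ deformation retracts onto its Levi $\prod_i \G_\C(r_i,d_i,\tau_i)$ (again because the unipotent radical is a $\tau$-fixed affine space, hence contractible), forming homotopy quotients converts the flag-bundle picture into the asserted homotopy equivalence $\prod_i C_{ss}(r_i,d_i,\tau_i)_{h\G_\C(r_i,d_i,\tau_i)} \cong C_\lambda(r,d,\tau)_{h\G_\C(r,d,\tau)}$.

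I expect the main obstacle to be the passage to fixed points in the middle step: justifying that $\bigl(\G_\C(r,d)/P_\lambda\bigr)^\tau$ equals $\G_\C(r,d,\tau)/P_\lambda(r,d,\tau)$ and is contractible, and more generally that taking $\tau$-fixed points commutes with the relevant fibrations and deformation retractions. In the smooth/Banach setting this is the statement that the short exact sequences of (pro-)unipotent groups and the affine bundles appearing in Atiyah--Bott's analysis have vanishing nonabelian $H^1$ for the $\Z/2$-action, so that fixed points are taken "fiberwise" without obstruction; concretely the unipotent radical of $P_\lambda$ is a vector space on which $\tau$ acts linearly (an $\R$-linear involution of a $\C$-vector space), its fixed set is a real subspace of half the real dimension, and exactness follows. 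Once this bookkeeping is in place, the rest is a routine translation of the Atiyah--Bott argument, and in fact one could alternatively deduce the result by applying their homotopy equivalence to the $\Z/2$-equivariant category throughout and then passing to fixed points at the very end. I would also remark that this theorem is exactly what is needed to obtain the real analogue of~(\ref{orbulabocelb2}), namely $P_t^{\G_\C(r,d,\tau)}(C_\lambda(r,d,\tau)) = \prod_{i=1}^k P_t^{\G_\C(r_i,d_i,\tau_i)}(C_{ss}(r_i,d_i,\tau_i))$, feeding the recursion~(\ref{aolrcubalocre}).
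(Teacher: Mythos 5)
Your overall strategy is the paper's own: the paper's proof of this theorem is a one-line reference to Atiyah--Bott Prop.\ 7.12, and you reconstruct that argument with the extra $\tau$-equivariant bookkeeping, which is exactly the intended content. However, one step in the middle of your write-up is wrong as stated. The homogeneous space $\G_{\C}(r,d)/P_\lambda$ is the space of $C^{\infty}$ filtrations of $E$ of type $\lambda$, i.e.\ a space of sections of a flag bundle over $\Sigma$; it is \emph{not} contractible and is not an ``affine-bundle tower'' (already for $r=2$ and $E$ trivial it is a component of the section space $Maps(\Sigma,\P^1)$, which has plenty of topology). The contractible object in this picture is the unipotent radical, i.e.\ the kernel of $P_\lambda \to \prod_i \G_{\C}(r_i,d_i)$, not the flag space $\G_{\C}(r,d)/P_\lambda$. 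Consequently your intermediate assertion that $C_{\lambda}(r,d)$, or its fixed-point set $C_{\lambda}(r,d,\tau)$, deformation retracts onto $\prod_i C_{ss}(r_i,d_i)$ (resp.\ $\prod_i C_{ss}(r_i,d_i,\tau_i)$) is false: $C_\lambda$ is the induced space $\G_{\C}(r,d)\times_{P_\lambda} C'_\lambda$, a bundle over the non-contractible flag space whose fibre $C'_\lambda$ (structures whose HN filtration is the fixed smooth one) retracts onto the split locus $\prod_i C_{ss}(r_i,d_i)$. The correct mechanism --- which your final sentence in fact invokes --- needs no contractibility of the flag space: the induced-bundle description gives $(C_\lambda)_{h\G_{\C}(r,d)} \simeq (C'_\lambda)_{hP_\lambda}$, and then one retracts $C'_\lambda$ onto the split structures and $P_\lambda$ onto its Levi. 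So the false claim is excisable, but as written your argument routes through it.

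The genuinely new point in the real case, which you gesture at but should make precise, is the fixed-point bookkeeping. The fixed set $(\G_{\C}(r,d)/P_\lambda)^{\tau}$ of $\tau$-invariant filtrations is in general a disjoint union of $\G_{\C}(r,d,\tau)$-orbits indexed by the possible real types of the subquotients --- this is precisely why the real stratification refines the complex one in Proposition 2.1. What the theorem requires is that the subspace of $\tau$-invariant filtrations whose subquotients have the prescribed types $(D_i,\tau_i)$ is the single orbit $\G_{\C}(r,d,\tau)/P_\lambda^{\tau}$ of the chosen split filtration (this uses the topological classification of real bundles, Theorem 1.1, together with the existence of $\tau$-invariant smooth complements/splittings, which is where your ``vanishing of nonabelian $H^1$ for the unipotent part'' remark belongs), and that $P_\lambda^{\tau}$ retracts onto $\prod_i \G_{\C}(r_i,d_i,\tau_i)$ because the fixed points of the unipotent radical form a real affine, hence contractible, space. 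With those two points in place of the contractibility claim, your argument is the paper's.
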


\begin{proof}
This is proved exactly like (\cite{ab2} Prop. 7.12).
\end{proof}

For a point $$\E = (\D_1,...,\D_k) \in \prod_{i=1}^kC_{ss}(r_i,d_i, \tau_i) \subseteq C_{\lambda}(r,d,\tau)$$ the fibre of the normal bundle $N_{\lambda}^{\tau}$ is identified with 

\begin{equation}\label{alorcebualrco}
N_{\lambda,\E}^{\tau} = (\bigoplus_{i<j} H^1(\Sigma, \D_i^*\otimes \D_j))^\tau =  \bigoplus_{i<j} H^1(\Sigma, \D_i^*\otimes \D_j)^{\tau_i^* \otimes \tau_j}
\end{equation}
Let $G_{\lambda}^{\tau} \subset \G_{\C}(r,d,\tau)$ be the subgroup isomorphic to $(\R^*)^k$ that acts by scalar multiplication on the summands $D_i$. An element $(t_1,...,t_k)$ acts trivially on $\prod_{i=1}^kC_{ss}(r_i,d_i, \tau_i)$ and acts on acts on the normal bundle (\ref{alorcebualrco}) by multiplying the summand $H^1(\Sigma, \D_i^*\otimes \D_j))^\tau$ by $t_i^{-1}t_j$.

\section{Equivariant perfection}\label{sect3}

In the case of complex bundles, the basic topological result responsible for the equivariant perfection is the so-called Atiyah-Bott Lemma (\cite{ab2} Prop. 13.4). In our current situation, we require a variation on the Atiyah-Bott Lemma valid in characteristic 2.  A similar result, proven under more restrictive hypotheses, can be found in Goldin-Holm (\cite{goldin2004real}, Lemma 2.3).

\begin{lem}\label{aoelurcalorce}
Let $G$ be a compact connected Lie group with $H^*(G;\Z)$ torsion free. Let $X$ be a $G$-space of finite type and let $E\rightarrow X$ be a $G$-equivariant $\R^n$-vector bundle. Suppose that there exists $\epsilon \in G$ such that
\begin{itemize}
	\item $\epsilon^2$ is the identity in $G$
	\item $\epsilon$ acts trivially on $X$
	\item $\epsilon$ acts by scalar multiplication by $-1$ on $E$.
\end{itemize} 
Then the equivariant Euler class $Eul_G(X)$ is not a zero divisor in $H^*_G(X) =H^*_G(X;\Z_2)$.
\end{lem}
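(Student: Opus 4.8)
The plan is to imitate the Atiyah--Bott argument (\cite{ab2} Prop.~13.4) but carefully tracking what survives with $\Z_2$-coefficients, using the element $\epsilon$ to detect the Euler class. The key point is that the hypotheses force $E$ to be, in a suitable equivariant sense, ``pulled back from a representation'', and the extra involution $\epsilon$ guarantees the Euler class is nontrivial on each fibre. First I would reduce to the case where $G$ has a maximal torus $T$ and use the splitting principle: since $H^*(G;\Z)$ is torsion free, $H^*(BG;\Z_2) = H^*(BT;\Z_2)^{W}$ and the restriction map $H^*_G(X;\Z_2)\to H^*_T(X;\Z_2)$ is injective, so it suffices to show $Eul_T(E)$ is not a zero divisor in $H^*_T(X;\Z_2)$. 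Thus we may assume $G=T$ is a torus.

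Next, since $\epsilon\in T$ with $\epsilon^2=1$ acts trivially on $X$ and by $-1$ on $E$, I would decompose $E$ according to the characters of $T$ appearing in the fibres. Because $\epsilon$ acts by $-1$, every weight $\chi$ of $E$ satisfies $\chi(\epsilon)=-1$; in particular no weight is trivial. Over $\Z_2$ the equivariant Euler class of the associated bundle behaves like a ``total Stiefel--Whitney-type'' class: for each weight $\chi\colon T\to\{\pm1\}\subset\Or(1)$ that occurs, the corresponding summand contributes the factor coming from $w_1$ of the associated real line bundle, which in $H^*_T(\mathrm{pt};\Z_2)=H^*(BT;\Z_2)=\Z_2[x_1,\dots,x_\ell]$ is a nonzero linear form $a_\chi:=\sum n_i x_i$ (nonzero precisely because $\chi$ is nontrivial, i.e. $\chi(\epsilon)=-1$ forces some $n_i$ odd). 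The upshot is that $Eul_T(E)$ restricted to the fibre is a product of nonzero elements of the polynomial ring $H^*(BT;\Z_2)$, hence nonzero there; and since $X$ is of finite type and $H^*_T(X;\Z_2)$ is a module over $H^*(BT;\Z_2)$, one concludes that $Eul_T(E)$ is not a zero divisor by a Leray--Hirsch/spectral sequence comparison with the fibre.

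More precisely, the final step I would carry out is the non-zero-divisor conclusion: consider the Borel fibration $X\to X_{hT}\to BT$ and the sub-bundle structure; multiplication by $Eul_T(E)$ on $H^*_T(X;\Z_2)$ is injective iff it is injective after passing to a point of $X$, because $H^*_T(X;\Z_2)$ is a finitely generated module over the integral domain $H^*(BT;\Z_2)$ and $Eul_T(E)$ maps to a nonzero element there (localization/associated primes argument: a nonzero element of the base domain cannot be a zero divisor on a torsion-free-at-the-relevant-primes module — or more elementarily, filter $X$ by $T$-invariant skeleta and induct, the boundary being exactness of $\cdot Eul$ which reduces to the fibre computation since $Eul$ is a nonzerodivisor on each $H^*(BT;\Z_2)$-summand in an equivariant cell decomposition). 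I expect the main obstacle to be this last module-theoretic step: over $\Z_2$ one loses the clean ``Euler class = product of weights'' statement and must instead argue that the Stiefel--Whitney-type Euler class of a bundle whose weights are all $\epsilon$-odd is a nonzerodivisor, which requires knowing the relevant linear forms $a_\chi$ are nonzero in $H^1(BT;\Z_2)$ and that products of such (over possibly repeated $\chi$'s) remain nonzerodivisors on $H^*_T(X;\Z_2)$ — the torsion-free hypothesis on $H^*(G;\Z)$ is exactly what makes $H^*(BT;\Z_2)$ a polynomial ring and keeps these forms from degenerating.
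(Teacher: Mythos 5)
Your first reduction (to the maximal torus $T\ni\epsilon$ via injectivity of $H^*_G(X)\to H^*_T(X)$) agrees with the paper, but after that the argument has two genuine gaps. First, the weight decomposition you invoke does not exist in this generality: only $\epsilon$ is assumed to act trivially on $X$, while $T$ may move the points of $X$, so $E$ need not split into $T$-isotypic subbundles over $X$ at all. Even at a $T$-fixed point your fibrewise analysis is off: a connected torus has no nontrivial characters to $\{\pm 1\}$, so a real line summand would force $\epsilon$ to act by $+1$, and $H^1(BT;\Z_2)=0$ (the generators of $H^*(BT;\Z_2)$ sit in degree $2$), so the ``nonzero linear forms $a_\chi$ coming from $w_1$'' you want to multiply do not exist; the $\epsilon$-odd weights occur in two-dimensional rotation blocks contributing degree-$2$ classes. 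Second, and more seriously, the concluding module-theoretic step is not valid: $H^*_T(X;\Z_2)$ is in general a torsion $H^*(BT;\Z_2)$-module (already for a single free orbit $X=T/\{1\}$), there may be no $T$-fixed point to restrict to, and a nonzero element of the base ring can perfectly well annihilate torsion classes. Nothing in your sketch (restriction to a point, associated primes, or induction over equivariant skeleta, where the cell contributions are $H^*(BT_\alpha)$ rather than $H^*(BT)$ and can have zero divisors) rules this out.

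The hypothesis has to be exploited through the subgroup $C_2=\{1,\epsilon\}$, which \emph{does} act trivially on $X$: then $H^*_{C_2}(X)=H^*(X)\otimes \Z_2[u]$ and $Eul_{C_2}(E)=u^n+(\text{terms of lower }u\text{-degree})$, which is a non-zero-divisor by a leading-term argument using finite type. The real content of the lemma is therefore the injectivity of the restriction $H^*_G(X)\to H^*_{C_2}(X)$, which your proposal never addresses. The paper obtains it by splitting $T\cong S^1\times T'$ with $\epsilon=(-1,\mathrm{Id})$, replacing $X$ by $X_{hT'}$ to reduce to $G=S^1$, and then proving injectivity of $H^*_{S^1}(X)\to H^*_{C_2}(X)$ by a Gysin-sequence/Poincar\'e-series count that again uses the triviality of the $C_2$-action. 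That reduction-to-$C_2$ step is exactly where the characteristic-$2$ difficulty lives, and it is the missing idea in your plan.
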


\begin{proof}
To begin, we reduce to the case that $G$ is abelian. Let $T \subset G$ be a maximal torus containing $\epsilon$, then by (\cite{ab2} 13.3) the functorial map $$ H^*_G(X) \rightarrow H^*_T(X) $$ is injective. Since the functorial map also sends $Eul_G(E)$ to $Eul_T(E)$, it suffices to show that $Eul_T(E)$ is not a zero divisor in $H^*_T(X)$.
	
Next, we reduce to the case of a circle group. Choose a decomposition $T \cong S^1 \times T'$, where $S$ is the circle group and $\epsilon = (-1,Id_{T'})$. Then there is a canonical isomorphism
$$ H_T^*(X) = H_{S^1}^*( X_{hT'}) $$
which identifies $Eul_T(E) = Eul_S(E_{hT'})$.  Thus, by replacing $X$ with $X_{hT'}$ and $E$ with $E'_{hT'}$,  it suffices to consider the case $G = S^1$ and $\epsilon = -1$.

So let $S = S^1$ and $C_2 = \{\pm 1\} \subset S$. 

\textbf{Claim:} The functorial map $ H^*_{S}(X) \rightarrow H^*_{C_2}(X)$ is injective. 

\begin{proof} The functorial map is induced $S/C_2$-principal fibration $\phi$
$$S/C_2 \rightarrow X_{hC_2}\stackrel{\phi} {\rightarrow}X_{hS}.$$
By considering the associated Gysin sequence we gain an inequality of Poincar\'e series
$$ P_t(X_{hC_2}) \leq P_t(X_{hS})(1+t)$$
with equality if and only if $\phi^*$ is injective. Since $C_2$ acts trivially on $X$ we have equality $P_t(X_{hC_2}) = P_t(X) P_t(BC_2) = P_t(X)/(1-t)$.  Furthermore, using the Serre spectral sequence of the fibration $X \rightarrow X_{hS} \rightarrow BS$ we get the inequality $ P_t(X_{hS})\leq P_t(X)P_t(BS) = P_t(X)/(1-t^2)$. Putting this all together we have
$$  P_t(X)/(1-t) = P_t(X_{hC_2})  \leq P_t(X_{hS})(1+t) \leq P_t(X)(1+t)/(1-t^2)= P_t(X)/(1-t) $$
so all of these inequalities are equalities and we are done.
\end{proof}

The injective map $H^*_S(X) \rightarrow H^*_{C_2}(X)$ sends $Eul_S(E)$ to $Eul_{C_2}(E)$, so it is enough to show that $Eul_{C_2}(E)$ is not a zero divisor in $H^*_{C_2}(X) = H^*(X)\otimes H^*(BC_2)$. This becomes a straight forward argument in direct analogy with the proof of (\cite{ab2} Prop. 13.4). This argument is carried out in Goldin-Holm (\cite{goldin2004real} Lemma 2.3), though they state the lemma with unnecessarily restrictive hypotheses suited to their applications in symplectic geometry.
\end{proof}

\begin{lem}\label{aolercuaroceb}
Consider a stratum $ C_{\lambda}(r,d, \tau)$ with $\lambda = ((D_1, \tau_1),...,(D_k,\tau_k))$ and normal bundle $N_{\lambda}^{\tau}$. 
Then $Eul_{\G_{\C}(r,d,\tau)}(N_{\lambda}^{\tau})$ is not a zero divisor in $H^*_{\G_{\C}(r,d,\tau)}(C_{\lambda}(r,d, \tau))$.
\end{lem}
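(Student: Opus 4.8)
The plan is to deduce this from Lemma \ref{aoelurcalorce} by exhibiting, for each stratum, a suitable involution $\epsilon$ in the gauge group that acts trivially on the stratum and by $-1$ on the normal bundle. The group $G = \G_{\C}(r,d,\tau)$ is not compact, but as noted in the Summary it deformation retracts onto the unitary gauge group $\G(r,d,\tau)$, and all of the equivariant cohomology in sight is the same for $G$ and this retract; so I would first replace $\G_{\C}(r,d,\tau)$ with $\G(r,d,\tau)$ (or, more precisely, work with the homotopy quotients, which are homotopy equivalent). The hypothesis of Lemma \ref{aoelurcalorce} that $G$ be a compact connected Lie group with torsion-free integral cohomology is itself a technical point: the relevant gauge group is an infinite-dimensional Banach Lie group, so I would either invoke the standard reduction (approximating $B\G$ by finite-dimensional pieces, as in \cite{ab2}) or, better, note that the argument in Lemma \ref{aoelurcalorce} only really used (i) injectivity on passing to a maximal torus and to $C_2$, and (ii) the Künneth splitting $H^*_{C_2}(X) = H^*(X)\otimes H^*(BC_2)$ with trivial $C_2$-action — both of which go through for the relevant class of infinite-dimensional groups by the same reasoning, so the lemma applies verbatim.

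The main construction is the choice of $\epsilon$. Recall from the end of \S\ref{sect2} that the subgroup $G_{\lambda}^{\tau} \cong (\R^*)^k$ acts trivially on $\prod_i C_{ss}(r_i,d_i,\tau_i)$ (hence, up to homotopy, trivially on all of $C_{\lambda}(r,d,\tau)$) and acts on the normal bundle summand $H^1(\Sigma, \D_i^*\otimes\D_j)^{\tau}$ by $t_i^{-1}t_j$. I would take $\epsilon = (t_1,\dots,t_k)$ with $t_i \in \{\pm 1\}$ chosen so that $t_i^{-1}t_j = -1$ for all $i < j$; since there are at least two distinct indices as soon as the stratum is unstable, I can for instance set $t_i = +1$ for $i$ odd and $t_i = -1$ for $i$ even — but that fails for $k \geq 3$. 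The correct choice is dictated by the reduction to a circle: rather than asking a single $\epsilon$ to act by $-1$ on every summand at once, I follow the structure of Lemma \ref{aoelurcalorce}'s proof and argue summand by summand, or equivalently observe that Lemma \ref{aoelurcalorce} is applied not to $N_\lambda^\tau$ as a whole but inductively. Concretely: it suffices to show $Eul(N_\lambda^\tau)$ is a non-zero-divisor, and $Eul(N_\lambda^\tau) = \prod_{i<j} Eul(N_{ij})$ where $N_{ij}$ is the bundle with fibre $H^1(\Sigma,\D_i^*\otimes\D_j)^\tau$; a product of non-zero-divisors in a (graded-commutative) ring is a non-zero-divisor, so I reduce to each $N_{ij}$ separately, and for $N_{ij}$ the element $\epsilon_{ij} \in (\R^*)^k$ with $t_i = -1$, $t_\ell = +1$ for $\ell \neq i$, has $\epsilon_{ij}^2 = 1$, acts trivially on the stratum, and acts by $-1$ on $N_{ij}$ (and possibly on other summands, which is harmless). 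Then Lemma \ref{aoelurcalorce} applies directly to $N_{ij}$.

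So the sequence of steps is: (1) replace the complex gauge group by its unitary deformation retract and reduce to checking the non-zero-divisor property there; (2) use the homotopy equivalence of Theorem \ref{Aoeurcoler} together with the product decomposition (\ref{alorcebualrco}) of the normal bundle to write $Eul_{\G}(N_\lambda^\tau)$ as a product over pairs $i<j$ of Euler classes of the sub-bundles $N_{ij}$; (3) for fixed $i<j$, apply Lemma \ref{aoelurcalorce} with $G = \G(r,d,\tau)$ (torsion-free cohomology being known from the Künneth/loop-group structure), $X = C_\lambda(r,d,\tau)$, $E = N_{ij}$, and $\epsilon = \epsilon_{ij}$ the scalar $-1$ on the $i$th summand, to conclude each $Eul(N_{ij})$ is a non-zero-divisor; (4) conclude that the product is a non-zero-divisor. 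The step I expect to be the real obstacle is (1) together with the hypotheses of Lemma \ref{aoelurcalorce}: one must be careful that the lemma, stated for compact Lie groups, is legitimately applicable to the infinite-dimensional gauge group — this requires either citing the standard finite-dimensional approximation of $\G(r,d,\tau)$ by compact connected groups with torsion-free cohomology (cf. \cite{ab2} \S2, \S8) and checking the Euler class and the involution $\epsilon$ are compatible with that approximation, or reworking the proof of Lemma \ref{aoelurcalorce} to run in the infinite-dimensional setting directly. Everything else is formal, given the results already established in \S\ref{sect2}.
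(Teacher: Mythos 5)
Your steps (2)--(4) --- decomposing $N_\lambda^{\tau}$ into the summands $N_{ij}$ with fibre $H^1(\Sigma,\D_i^*\otimes\D_j)^{\tau}$, using the scalar subgroup $(\R^*)^k$ to produce, for each pair $i<j$, an involution $\epsilon_{ij}$ acting by $-1$ on that summand, and concluding via ``a product of non-zero-divisors is a non-zero-divisor'' --- are exactly the shape of the paper's argument. The genuine gap is the point you yourself flag as ``the real obstacle'' and then wave through: Lemma \ref{aoelurcalorce} is stated for a \emph{compact connected} Lie group $G$ with $H^*(G;\Z)$ torsion free, and neither of your proposed fixes makes it applicable to $G=\G(r,d,\tau)$. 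The real gauge group is not connected (cf.\ $\pi_0(LU_r^{\tau})\cong(\Z/2)^2$ in the proof of Lemma \ref{osrecbuarsc}), its topology is governed by orthogonal-group phenomena so the torsion-freeness hypothesis fails rather than ``being known from the K\"unneth/loop-group structure,'' and the first reduction in the proof of Lemma \ref{aoelurcalorce} --- injectivity of $H^*_G(X)\to H^*_T(X)$ for a maximal torus $T$ containing $\epsilon$, quoted from \cite{ab2} 13.3 --- has no readily available analogue for this infinite-dimensional, disconnected group. So ``the lemma applies verbatim'' is not justified, and a finite-dimensional approximation by compact connected groups with torsion-free cohomology compatible with $\epsilon$ is not something you can simply cite.

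The missing idea in the paper's proof is that one never applies Lemma \ref{aoelurcalorce} to the gauge group at all. Choose a point $p\in\Sigma$ \emph{not} fixed by $\sigma$, and use the exact sequences $\G_i^{bas}\to\G_i\to GL(D_{i,p})$, where $\G_i^{bas}$ is the based subgroup acting trivially on the fibre $D_{i,p}$; since $\G_1^{bas}\times\cdots\times\G_k^{bas}$ is normal, the homotopy quotient of $N_\lambda^{\tau}|_{C_1\times\cdots\times C_k}$ can be formed in two stages, first by the based gauge groups and then by $U(D_{1,p})\times\cdots\times U(D_{k,p})$ (homotopy equivalent to the quotient $\prod_i GL(D_{i,p})$). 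Lemma \ref{aoelurcalorce} is then applied with $G$ this finite-dimensional compact connected group, whose integral cohomology is torsion free, with $\epsilon$ the central element $(\pm1,\dots,\pm1)\in\prod_i C_2^i$ acting as you describe on the summands. The choice of a non-real point $p$ is essential: at a point of $\Sigma^{\sigma}$ the evaluation would land in an orthogonal form, which is disconnected and has $2$-torsion, and the lemma would again be unavailable. Without this two-stage reduction your step (3) does not go through as written.
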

\begin{proof}

For notational simplicity, denote $\G_i := \G_{\C}(r_i,d_i, \tau_i) $ and $C_i := C_{ss}(r_i,d_i, \tau_i) $.  As explained in \S \ref{aorecbularcoj}, we have a homotopy equivalence

\begin{equation}
\prod_{i=1}^k(C_i)_{h\G_i} \cong C_{\lambda}(r,d,\tau)_{h\G_{\C}(r,d,\tau)}.
\end{equation}
under which there is an isomorphism of vector bundles

\begin{equation}\label{aoeurblaoerc}
(N_{\lambda}^{\tau}|_{C_1 \times ... \times C_k})_{h(\G_1 \times ...\times \G_k)} \cong (N_{\lambda}^{\tau})_{h\G_{\C}(r,d,\tau)} .
\end{equation}
We can also form the vector bundle (\ref{aoeurblaoerc}) in two stages.  Let $p \in \Sigma$ be a point that is not fixed by $\sigma$, then we have short exact sequences
$$\G_i^{bas} \rightarrow \G_i \rightarrow GL(D_{i,p}) $$
where $\G_i^{bas} \subset \G_i$ is the subgroup that acts trivially on the fibre $D_{i,p}$ and $GL(D_{i,p})$ is the general linear group of the fibre.  Up to homotopy, we may restrict to the subgroup $U(D_{i,p}) \subset GL(D_{i,p})$. The subgroup $\G^{bas}_1 \times ...\times \G^{bas}_k $ is normal, so we can form the homotopy quotient in stages 
$$ (N_{\lambda}^{\tau}|_{C_1 \times ... \times C_k})_{h(\G_1 \times ...\times \G_k)} \cong  ((N_{\lambda}^{\tau}|_{C_1 \times ... \times C_k})_{h(\G_1^{bas} \times ...\times \G_k^{bas})})_{h(U(D_{1,p})\times ...\times U(D_{k,p}))}$$ 

The vector bundle $(N_{\lambda}^{\tau}|_{C_1 \times ... \times C_k})_{h(\G_1^{bas} \times ...\times \G_k^{bas})}$ decomposes into summands according to (\ref{alorcebualrco}). The central subgroup $\prod_{i=1}^k C_2^{i} \subset \prod_{i=1}^kU(D_{i,p})$ acts trivially on $(C_1 \times...\times C_k)_{h(\G_1^{bas} \times ...\times \G_k^{bas})} $ and $(t_1,...,t_k)\in \prod_{i=1}^k C_2^{i}$ acts on $N_{\lambda}$ by scalar multiplying the summand $H^1(\Sigma, Hom(D_i, D_j))^{\tau}$ by $t_i^{-1}t_j$. Applying Lemma \ref{aoelurcalorce},  we conclude that Euler classes of the summands of $(N_{\lambda}^{\tau}|_{C_1 \times ... \times C_k})_{h(\G_1 \times ...\times \G_k)}$ are not a zero-divisors, so $Eul((N_{\lambda}^{\tau}|_{C_1 \times ... \times C_k})_{h(\G_1 \times ...\times \G_k)})$ is not a zero divisor.
\end{proof}

\begin{thm}\label{erablricbeoajj}
For $(E,\tau)$ a real/quaternionic $C^{\infty}$-bundle over a real curve $(\Sigma,\sigma)$, the Harder-Narasimhan stratification of $C(E, \tau)$ is $\G(E,\tau)$-equivariantly perfect, establishing the recursive formula (\ref{aolrcubalocre}).
\end{thm}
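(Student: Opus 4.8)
The plan is to follow the standard Atiyah--Bott machinery (\cite{ab2} \S1) exactly as in the complex case, with Lemma \ref{aolercuaroceb} supplying the one input that genuinely differs in characteristic $2$. The starting point is the Thom--Gysin sequence associated to the stratum $C_\lambda(r,d,\tau)$, viewed as a closed submanifold of finite codimension inside the open set $U_\lambda := C_\lambda(r,d,\tau) \cup (\text{open strata below it in the order})$, with real normal bundle $N_\lambda^\tau$ of rank $d_\lambda$. Passing to homotopy quotients by $\G(r,d,\tau)$ (equivalently $\G_\C(r,d,\tau)$, since the former is a deformation retract of the latter), the Thom isomorphism gives a long exact sequence
\begin{equation*}
\cdots \to H^{j-d_\lambda}_{\G}(C_\lambda(r,d,\tau)) \xrightarrow{\ \cup\, Eul\ } H^j_{\G}(U_\lambda) \to H^j_{\G}(U_\lambda \setminus C_\lambda(r,d,\tau)) \to \cdots
\end{equation*}
in $\Z/2$-cohomology, where the connecting-type map is cup product with $Eul_{\G}(N_\lambda^\tau)$. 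By Lemma \ref{aolercuaroceb} this Euler class is not a zero divisor in $H^*_{\G}(C_\lambda(r,d,\tau))$, so the first map is injective; hence the long exact sequence breaks into short exact sequences and we obtain, for each $\lambda$,
\begin{equation*}
P_t^{\G}(U_\lambda) = t^{d_\lambda} P_t^{\G}(C_\lambda(r,d,\tau)) + P_t^{\G}(U_\lambda \setminus C_\lambda(r,d,\tau)),
\end{equation*}
i.e.\ the stratum is ``equivariantly perfect'' in the sense that no cancellation occurs.

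Next I would assemble these local statements into the global one. Using property (2) of the Morse package, enumerate the strata $\lambda_1, \lambda_2, \dots$ so that $U_m := \bigcup_{i \le m} C_{\lambda_i}$ is open for each $m$ with $C_{\lambda_{m+1}}$ closed in $U_{m+1}$; property (3) guarantees that in each cohomological degree only finitely many strata contribute, so the colimit $\varinjlim_m H^*_{\G}(U_m) = H^*_{\G}(C(r,d,\tau))$ is computed degreewise and the Poincar\'e series identities add up. Iterating the short exact sequences gives
\begin{equation*}
P_t^{\G}(C(r,d,\tau)) = \sum_{\lambda} t^{d_\lambda} P_t^{\G}(C_\lambda(r,d,\tau)),
\end{equation*}
which is the assertion that the stratification is equivariantly perfect. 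Separating off the open semistable stratum ($d_\lambda = 0$) and rearranging yields $P_t^{\G}(C_{ss}(r,d,\tau)) = P_t^{\G}(C(r,d,\tau)) - \sum_{\lambda \ne (r,d,\tau)} t^{d_\lambda} P_t^{\G}(C_\lambda(r,d,\tau))$. Finally, substitute $P_t^{\G}(C(r,d,\tau)) = P_t(B\G_\C(r,d,\tau))$ (since $C(r,d,\tau)$ is affine, hence contractible) and apply Theorem \ref{Aoeurcoler}, which identifies $P_t^{\G}(C_\lambda(r,d,\tau)) = \prod_{i=1}^n P_t^{\G_\C(r_i,d_i,\tau_i)}(C_{ss}(r_i,d_i,\tau_i))$; this produces precisely the recursive formula (\ref{aolrcubalocre}).

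The only genuinely new ingredient compared with \cite{ab2} is the non--zero--divisor property of the Euler class, which is exactly Lemma \ref{aolercuaroceb}; everything else is a formal consequence of the Morse package (the Proposition of \S\ref{sect2}) together with homological bookkeeping that is insensitive to the coefficient field. I therefore expect the main point needing care to be the justification that the long exact Thom--Gysin sequence is available \emph{equivariantly} and degreewise finite: one must check that the homotopy quotients $U_{\lambda,h\G}$ are of finite type in each degree (so that Poincar\'e series are well defined and the colimit argument is legitimate) and that the normal bundle $N_\lambda^\tau$ is genuinely a finite--rank equivariant real vector bundle over $C_\lambda(r,d,\tau)$ with the stated Euler class appearing as the Gysin map — both of which follow from the finite--codimension submanifold structure established in \S\ref{sect2} and the identification (\ref{alorcebualrco}). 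Given these, the proof is a line--by--line transcription of (\cite{ab2} \S\S1--2) with $\Z/2$ coefficients, so I would simply state that and refer the reader there, exactly as the paper does for Theorem \ref{Aoeurcoler}.
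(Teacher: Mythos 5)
Your proposal is correct and is essentially the paper's proof: the paper simply compresses the Thom--Gysin/stratification bookkeeping you spell out into a citation of the Atiyah--Bott ``self-completing principle'' (\cite{ab2} Prop.\ 1.9), with Lemma \ref{aolercuaroceb} as the only new input, exactly as you identify. Your unpacking (short exact sequences from the non--zero--divisor Euler class, summation over strata via the Morse package, then Theorem \ref{Aoeurcoler} and contractibility of $C(r,d,\tau)$ to obtain (\ref{aolrcubalocre})) matches the intended argument.
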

\begin{proof}
This is follows from Lemma \ref{aolercuaroceb} by the self-completing principle of Atiyah-Bott (\cite{ab2} Prop. 1.9).	
\end{proof}

\section{Classifying spaces of gauge groups}\label{sect4}

Let $G$ be a topological group and $P \rightarrow M$ a principal bundle over a finite cell complex $M$.  Let $$\G(P) = \G_P = Maps_G(P, G)$$ denote the group of continuous gauge transformations. If $BG$ can be represented by a CW-complex (say if $G$ is a Lie group), then there is a homotopy equivalence (see Atiyah-Bott \cite{ab2} Prop. 2.4)
\begin{equation}\label{erceb} B\G(P) \cong Maps_P( M, BG) \end{equation}
where $Maps( M, BG) $ is the space of continuous maps from $M$ to $BG$ with compact-open topology, and $Maps_P( M, BG) $ is the path component classifying $P$.

Given a $\C^r$-vector bundle $E$, we denote by $\G_{\C}(E)$ the gauge group of the $GL_r(\C)$-frame bundle of $E$ and by $\G(E)$ the gauge group of the orthonormal frame bundle with respect to an unspecified Hermitian metric.  It is explained in (\cite{ab2} section 8), the natural inclusion $\G(E) \hookrightarrow \G_{\C}(E)$ is  is a homotopy equivalence, so they are largely interchangeable for our purposes. We prefer to work with  $\G(E)$ to take advantage of the compactness of $P$.

Suppose that $f: N \rightarrow M$ is a continuous map of finite complexes, and $\phi: G \rightarrow H$ a homomorphism of topological groups. Combining pull-back and induction (in either order), form the $H$-bundle $f^*P \times_G H$ over $N$. There is a canonically induced homomorphism of gauge groups $\psi: \G(P) \rightarrow \G(f^*P \times_G H)$.

\begin{prop}
Denote by $P' := f^*P \times_G H$. The following diagram commutes up to homotopy

\begin{equation}\label{aoleruccorei}
	\xymatrix{  B\G(P)  \ar[r]^{B\psi} \ar[d] & B\G(P') \ar[d] \\
              Maps_P(M, BG)  \ar[r] &   Map_{P'}(N, BH) }  
\end{equation}
where $B\psi$ is functorially induced by $\psi$, the vertical arrows are the isomorphism from \ref{erceb} and the bottom arrow is defined by composition by $f$ and $B\phi$. 
\end{prop}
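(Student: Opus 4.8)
The plan is to prove the proposition by exhibiting explicit, functorial models for the homotopy equivalence (\ref{erceb}) for which the diagram (\ref{aoleruccorei}) commutes strictly; since any two choices of classifying space and of the equivalence (\ref{erceb}) agree up to homotopy, this suffices. Recall the standard model underlying (\cite{ab2} Prop.~2.4): one identifies $E\G(P)$ with the contractible space $Maps_G(P, EG)$ of $G$-maps $P \to EG$, on which $\G(P) = Maps_G(P, G)$ acts freely by $(\Phi, g) \mapsto \Phi\cdot g$, $(\Phi\cdot g)(p) = \Phi(p)g(p)$, so that $B\G(P) = Maps_G(P, EG)/\G(P)$; under this model the equivalence of (\ref{erceb}) sends the class of $\Phi$ to the map $M \to BG$ obtained by descending $P \xrightarrow{\Phi} EG \to BG$ along $P \to M$. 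We use the analogous model for $P' = f^*P \times_G H$ over $N$, and we choose $EG$ and $EH$ to be Milnor's functorial models, so that $\phi$ induces a $\phi$-equivariant map $j\colon EG \to EH$ covering $B\phi\colon BG \to BH$.

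Let $\pi\colon f^*P \to P$ be the canonical $G$-map covering $f$. Unwinding the definitions of pull-back and extension of structure group, the homomorphism $\psi\colon \G(P) \to \G(P')$ is given explicitly by $\psi(g)([q,h]) = h^{-1}\phi\big(g(\pi(q))\big)h$ for $q \in f^*P$ and $h \in H$. Define $\Theta\colon Maps_G(P, EG) \to Maps_H(P', EH)$ by $\Theta(\Phi)([q,h]) = j\big(\Phi(\pi(q))\big)h$. A direct check, using only that $j$ is $\phi$-equivariant, shows that $\Theta(\Phi)$ is well defined on $P' = (f^*P \times H)/G$ and is $H$-equivariant, and that $\Theta(\Phi\cdot g) = \Theta(\Phi)\cdot\psi(g)$. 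Hence $\Theta$ is a $\psi$-equivariant map of contractible free spaces and descends to a map $B\G(P) \to B\G(P')$ realizing $B\psi$.

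It remains to check that $\Theta$ intertwines the two vertical maps of (\ref{aoleruccorei}). Composing $\Theta(\Phi)\colon P' \to EH$ with $EH \to BH$ yields the $H$-invariant map $[q,h] \mapsto B\phi\big(\bar\Phi(\pi(q))\big)$, where $\bar\Phi\colon P \to BG$ is $\Phi$ followed by $EG \to BG$. Since $\bar\Phi$ descends along $P \to M$ to the classifying map $c\colon M \to BG$ of $P$ and since $\pi$ covers $f$, this $H$-invariant map descends along $P' \to N$ to $B\phi \circ c \circ f$. Thus, with the chosen models, (\ref{aoleruccorei}) commutes on the nose, and therefore commutes up to homotopy in general. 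The only real work is bookkeeping --- keeping track of the right $G$-actions on $P$, $EG$, and $\G(P)$, of the conjugation formula defining $\psi$, and of the well-definedness and equivariance of $\Theta$ --- and nothing arises beyond the naturality already present in Atiyah--Bott's proof of (\ref{erceb}).
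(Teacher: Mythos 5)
Your proof is correct and takes essentially the same route as the paper: both exhibit explicit Milnor-type models for $E\G(P)$ and for the equivalence (\ref{erceb}) so that the square commutes strictly at the level of equivariant total spaces and then descends to the diagram (\ref{aoleruccorei}). The only real difference is bookkeeping: the paper keeps the join model $Maps_G(P,G)^{*\infty}$ in its equivariant square so that the induced map on orbit spaces is literally the functorial $B\psi$, whereas you work solely with $Maps_G(P,EG)$ and identify your descended map with $B\psi$ by the standard uniqueness (up to homotopy) of maps covered by $\psi$-equivariant maps of universal bundles --- a legitimate, standard extra step.
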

\begin{proof}
We use the Milnor join construction of classifying spaces to make $B$ a functor \cite{milnor1956construction2}.  This construction models $EG$ as the infinite join $G^{* \infty}$. From this point of view, diagram (\ref{aoleruccorei}) is the orbit space map of the equivariant diagram

$$\xymatrix{  Maps_G(P, G)^{* \infty}  \ar[r] \ar[d] & Maps_{H}(P', H)^{* \infty} \ar[d] \\
              Maps_G(P, G^{* \infty})  \ar[r] &   Map_{H}(P', H^{* \infty}) }  $$
which is readily seen to be commutative on the nose.
\end{proof}

Using the identification (\ref{erceb}), we have an evaluation map
$$ev: M \times B\G(P) \rightarrow BG.$$
Define a linear map $t: H_p(M) \otimes H^q(BG) \rightarrow H^{q-p}(B\G(P))$ by 

\begin{equation}\label{rebairceb} t( \sigma \otimes \alpha) = \int_{\sigma} ev^*(\alpha) \end{equation}
where we the integral denotes the \emph{slant product} of $\alpha$ with respect to $\sigma$.

\begin{prop}\label{aeoucgxalbrcu}
Denote by $P' := f^*P \times_G H$ as before. The diagram  $$ \xymatrix{  H_*(N) \otimes H^*(BH) \ar[rr]^t \ar[d]^{f_* \otimes B\phi^*} & &  H^*(B\G_{P'}) \ar[d]^{B\psi^*} \\  H_*(M) \otimes H^*(BG) \ar[rr]^t &&   H^*(B\G( P))  } $$ commutes. In other words, $t$ is natural with respect to pull-back and induction of principal bundles.
\end{prop}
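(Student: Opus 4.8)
The plan is to reduce the commutativity of the square to the already-established homotopy-commutativity of diagram (\ref{aoleruccorei}), together with the naturality of the evaluation map and of the slant product. First I would unwind the definition of $t$ on both rows: by (\ref{rebairceb}), $t(\sigma\otimes\alpha) = \int_\sigma ev^*(\alpha)$, where on the top row $ev\colon N\times B\G_{P'}\to BH$ and on the bottom row $ev\colon M\times B\G_P\to BG$. So the statement is really a compatibility between these two evaluation maps under the map $f\times B\psi\colon N\times B\G_P\to$ (wait—directions): more precisely, under $\id_N\times B\psi$ and $f\times\id$ combined appropriately. I would make this precise by producing a single commuting (up to homotopy) square of spaces
\begin{equation*}
\xymatrix{ N\times B\G_P \ar[r]^{\id\times B\psi} \ar[d]_{f\times\id} & N\times B\G_{P'} \ar[d]^{ev} \\ M\times B\G_P \ar[r]^{ev} & \ar@{}[l] \ }
\end{equation*}
no—cleaner: the key geometric input is that the composite $N\times B\G_P \xrightarrow{\id\times B\psi} N\times B\G_{P'}\xrightarrow{ev} BH$ is homotopic to $N\times B\G_P \xrightarrow{f\times\id} M\times B\G_P \xrightarrow{ev} BG\xrightarrow{B\phi} BH$. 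This is exactly what diagram (\ref{aoleruccorei}) says once one recalls that under the identification (\ref{erceb}) the evaluation map $ev\colon M\times B\G_P\to BG$ corresponds to the tautological ``evaluate at a point of $M$'' map on $Maps_P(M,BG)$, so that postcomposition with $B\phi$ and precomposition with $f$ (the bottom arrow of (\ref{aoleruccorei})) is intertwined with $ev$ in precisely this way.

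Granting that homotopy, the rest is formal. I would chase $\sigma\otimes\alpha \in H_*(N)\otimes H^*(BH)$ around the square. Going right then down: $t(\sigma\otimes\alpha) = \int_\sigma ev^*\alpha \in H^*(B\G_{P'})$, and applying $B\psi^*$ gives $B\psi^*\int_\sigma ev^*\alpha = \int_\sigma (\id\times B\psi)^* ev^*\alpha$ by naturality of the slant product in the space variable that is not integrated over. By the homotopy just discussed, $(\id\times B\psi)^*ev^*\alpha = (f\times\id)^*ev^*(B\phi^*\alpha)$. Going down then right: $f_*\otimes B\phi^*$ sends $\sigma\otimes\alpha$ to $f_*\sigma\otimes B\phi^*\alpha$, and then $t$ gives $\int_{f_*\sigma} ev^*(B\phi^*\alpha)$. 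The identity $\int_{f_*\sigma}\beta = \int_\sigma (f\times\id)^*\beta$ (the projection/naturality formula for the slant product under $f_*$ in homology) with $\beta = ev^*(B\phi^*\alpha)$ matches the two expressions. So the square commutes.

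The main obstacle — really the only non-bookkeeping point — is justifying that cleanly. There are two sub-issues: (i) identifying, under (\ref{erceb}), the evaluation map $ev$ with the bottom arrow of (\ref{aoleruccorei}) composed appropriately with the ``universal evaluation'' $Maps_P(M,BG)\times M\to BG$; this is essentially the definition of (\ref{erceb}) and of $ev$, but it should be stated. (ii) The slant-product naturality identities $\int_{f_*\sigma}\beta = \int_\sigma(f\times\id)^*\beta$ and $g^*\int_\sigma\gamma = \int_\sigma(\id\times g)^*\gamma$; these are standard (e.g. Spanier), and I would simply cite them. Since diagram (\ref{aoleruccorei}) only commutes up to homotopy, one works throughout at the level of cohomology, where homotopic maps induce equal maps, so no coherence subtleties arise. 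I would write this up in a few lines: state the two slant-product identities, invoke (\ref{aoleruccorei}) to get the homotopy of composites $N\times B\G_P\to BH$, and then give the one-line diagram chase above.
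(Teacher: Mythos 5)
Your argument is correct and is essentially the paper's own proof: the paper likewise factors the square through the evaluation maps and the slant product, with the left-hand square commuting by the homotopy-commutativity of diagram (\ref{aoleruccorei}) and the right-hand square by naturality of the slant product. You simply spell out the diagram chase and the two slant-product identities that the paper dismisses as ``well known to commute.''
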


\begin{proof}
The square above factors as two squares that are both well known to commute

$$  \xymatrix{  H_*(N) \otimes H^*(BH) \ar[rr]^{Id \otimes ev^*} \ar[d]^{f_* \otimes B\phi^*} &&  H_*(N) \otimes H^*(N \times B\G_{\tilde{P}}) \ar[rr]^{\int} \ar[d]^{f_* \otimes B\psi^*}&&  H^*(B\G_{\tilde{P}}) \ar[d]^{B\psi^*} \\  H_*(M) \otimes H^*(BG) \ar[rr]^{Id \otimes ev^*} && H_*(M) \otimes H^*(M \times B\G_{P}) \ar[rr]^{\int}&&   H^*(B\G_P)  }. $$

\end{proof}

\subsection{Loop groups}

Given a Lie group $G$, the \emph{loop group} $LG = Maps(S^1, G)$ can be thought of as the group of gauge transformations of the trivial $G$ bundle over $S^1$.  By (\ref{erceb}),  we identify,
$$  BLG \cong  L_0 BG $$
where $L_0BG$ is the path component of $LBG = Maps(S^1, BG)$ containing the constant maps. Consider the fibration sequence  
\begin{equation}\label{oleuablcroeb}
\Omega BG \longrightarrow LBG \stackrel{ev_1}{\longrightarrow} BG
\end{equation}
where $ev_1$ is evaluation at the basepoint $1 \in S^1$.

\begin{prop}\label{aolercubaloxe}
	In case $G = U_r, SU_r$ or $O_r$, the fibre of (\ref{oleuablcroeb}) is totally non-homologous to zero in characteristic $2$.  Consequently, there are isomorphisms
	$$ H^*(LBG) \cong H^*(G) \otimes H^*(BG) $$
as graded $H^*(BG)$-modules.
\end{prop}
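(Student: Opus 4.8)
The plan is to show that the Serre spectral sequence of the fibration $\Omega BG \to LBG \xrightarrow{ev_1} BG$ degenerates at $E_2$ with $\Z/2$-coefficients, which by the Leray--Hirsch theorem gives the claimed module isomorphism. Since $G$ is connected, $BG$ is simply connected, so there is no local coefficient issue, and $\Omega BG \simeq G$ identifies the fibre cohomology with $H^*(G;\Z_2)$. For $G = U_r, SU_r, O_r$ the mod $2$ cohomology $H^*(G;\Z_2)$ is an exterior algebra (for $U_r, SU_r$) or a tensor product of truncated polynomial algebras (for $O_r$) on explicit generators, and in each case it is generated as an algebra by classes in a set of degrees that we can track. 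The heart of the argument is to produce, for each algebra generator $x \in H^*(G;\Z_2)$, a class in $H^*(LBG;\Z_2)$ restricting to $x$ on the fibre; then multiplicativity of the spectral sequence forces all differentials to vanish on $E_2^{0,*}$, hence everywhere, and Leray--Hirsch applies.

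The key step is the construction of the lifting classes, and here I would use the section/evaluation structure available for a \emph{loop} fibration rather than a general fibration. The map $ev_1 \colon LBG \to BG$ admits a section $s$ given by constant loops, so $H^*(BG) \to H^*(LBG)$ is a split injection and $E_\infty^{*,0} = E_2^{*,0}$; this already handles the base. For the fibre, consider instead the evaluation map $ev \colon S^1 \times LBG \to BG$, $(\theta, \gamma) \mapsto \gamma(\theta)$. Slant product with the fundamental class $[S^1] \in H_1(S^1;\Z_2)$ gives a transgression-type map $H^q(BG;\Z_2) \to H^{q-1}(LBG;\Z_2)$, $\alpha \mapsto \int_{[S^1]} ev^*\alpha$ --- this is exactly the map $t$ from equation (\ref{rebairceb}) applied to $M = S^1$, $P$ the trivial bundle. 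Restricting to the fibre over $1 \in S^1$, this recovers the classical cohomology suspension $H^q(BG) \to H^{q-1}(G)$, which is well known to hit the exterior/polynomial generators of $H^*(G;\Z_2)$ in each of the three cases (the mod $2$ cohomology of $BU_r, BSU_r, BO_r$ is polynomial on Chern/Stiefel--Whitney classes, whose suspensions are precisely the standard generators of $H^*(G;\Z_2)$). Thus every algebra generator of $H^*(\text{fibre};\Z_2)$ lifts to $H^*(LBG;\Z_2)$.

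Once the generators lift, the Leray--Hirsch theorem gives that $H^*(LBG;\Z_2)$ is a free $H^*(BG;\Z_2)$-module with basis any additive basis of $H^*(G;\Z_2)$, i.e. $H^*(LBG;\Z_2) \cong H^*(G;\Z_2) \otimes H^*(BG;\Z_2)$ as $H^*(BG;\Z_2)$-modules, and that the fibre is totally non-homologous to zero. Restricting to the path component $L_0 BG \simeq BLG$ changes nothing since $BG$ is connected, so $LBG$ is connected. The main obstacle is purely bookkeeping: verifying in each of the three cases $U_r, SU_r, O_r$ that the mod $2$ cohomology suspension is surjective onto the algebra generators --- for $U_r$ and $SU_r$ this is immediate since everything is torsion-free and the rational/integral statement already appears in Atiyah--Bott, while for $O_r$ one must recall that $H^*(O_r;\Z_2)$ is generated by the suspensions of the Stiefel--Whitney classes, so no new generators appear beyond the image of the suspension. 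No genuinely hard input is needed beyond these standard facts about classifying spaces of the classical groups.
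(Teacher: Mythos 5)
Your strategy for $U_r$ and $SU_r$ is fine, and it is genuinely different from the paper's: you lift the exterior generators of $H^*(G;\Z_2)$ to $H^*(LBG;\Z_2)$ via the slant/evaluation classes $t([S^1]\otimes c_k)$ of (\ref{rebairceb}), whose fibre restrictions are the cohomology suspensions of the Chern classes, and then invoke Leray--Hirsch; the paper instead stabilizes ($O_r\to O$, resp.\ $U_r\to U$), uses Bott periodicity and Milnor's theorem to make $BO$ an H-space and thereby trivializes $LBO\simeq BO\times O$, and pulls surjectivity of the fibre restriction back along the cohomology surjection $H^*(O;\Z_2)\to H^*(O_r;\Z_2)$.

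However, in the case the paper actually needs, $G=O_r$, your argument has a genuine gap. You assert ``since $G$ is connected, $BG$ is simply connected, so there is no local coefficient issue,'' but $O_r$ is \emph{not} connected: $\pi_1(BO_r)=\pi_0(O_r)=\Z/2$, the fibre $\Omega BO_r\simeq O_r$ is disconnected, and $LBO_r$ itself has two components. Two things break as a result. First, the monodromy of $\pi_1(BO_r)$ on $H^*(O_r;\Z_2)$ (induced by conjugation by an orientation-reversing element) must be shown trivial before your Serre spectral sequence has the untwisted $E_2$-page you use; the paper's proof addresses exactly this point, deducing triviality of the action together with surjectivity from the stable comparison. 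Second, your key input --- that the mod $2$ cohomology suspension of the Stiefel--Whitney classes generates $H^*(O_r;\Z_2)$ as an algebra --- is standard only for the connected group $SO_r$. For $O_r$ you must additionally account for $H^0$ and for the cohomology of the non-identity component: $\sigma^*(w_1)$ does give the class detecting $\pi_0(O_r)$, but you still need an argument (e.g.\ primitivity of suspension elements with respect to the loop multiplication, so that translation to the other component preserves them) that the subalgebra generated by the $\sigma^*(w_k)$ surjects onto the cohomology of \emph{both} components. This is repairable, but as written the $O_r$ case --- the one used for the real loop groups in Proposition \ref{robelurcbe} --- is not proved; alternatively one can note that surjectivity of the fibre restriction, once established, itself forces the $\pi_1$-action to be trivial, and then apply Leray--Hirsch without any spectral sequence, but the generation statement for the disconnected group still has to be supplied.
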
 

\begin{proof}
We consider the case $G = O_r$ (cases $G = U_r$ and $G =SU_r$ are similar). Let $O = \lim_{r \rightarrow \infty} O_r$ denote the infinite orthogonal group.  By Bott Periodicity,  $BO$ is a loop space hence has the homotopy type of a topological group by a result of Milnor \cite{milnor1956construction}. Exploiting multiplication on $BO$, one easily constructs a trivialization of the bundle $$LBO \cong BO \times \Omega BO \sim BO \times O.$$ The inclusion $O_r \rightarrow O $ is surjective on $\Z_2$-cohomology so the morphism of fibration sequences
$$\xymatrix{ O \ar[r]  & LBO \ar[r] & BO  \\
           O_r \ar[r] \ar[u] & LBO_r \ar[r] \ar[u] & BO_r \ar[u] } $$
implies that the fibre inclusion $O_r \rightarrow LBO_r$ is a cohomology surjection and $\pi_1(BO_r) $ acts trivially on $H^*(O_r)$.  The result now follows from the Leray-Hirsch theorem.
\end{proof}

For the following Lemma, let $M = \vee_{i=1}^m S^1_i$ be a wedge of $m$ circles.  For some $p$, $0 \leq p \leq m$ let $\G$ be the subgroup of $Maps(M, U_r)$ of maps that restrict to contractible loops on the first $p$ circles. We have a composition of maps $B\G \rightarrow BMaps(M,U_r) = Maps(M, BU_r)$, so it makes sense to define an evaluation map
$$ ev: M \times B\G \rightarrow BU_r $$
and the operator $t: H_*(M)\otimes H^*(BU_r) \rightarrow H^*(B\G)$ as in Proposition \ref{aeoucgxalbrcu}.

Recall that $H^*(BU_r;\Z_2) = S(c_1,...,c_r)$ where $c_k$ is (the mod $2$ reduction of) the universal $k$th Chern class, with degree $|c_k| = 2k$. 

\begin{lem}\label{oaarceublrc} 
The cohomology ring $H^*( B\G)$ decomposes as a tensor product of a polynomial algebra generated by classes $c_k :=  t ([pt] \otimes c_{k}  )$ for $k=1,...,r$ and an exterior algebra generated by classes $\bar{c}_{i,k} = t ([S^1_i]\otimes c_{k}  )$ for $ (i,k) \in \{ \{1,..,m\}\times \{1,...,r\} | k\neq 1 \text{ if } i\leq p\}$
\end{lem}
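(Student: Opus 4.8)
The plan is to build the cohomology of $B\G$ from known pieces via a fibration and the Eilenberg--Moore (or Leray--Hirsch) machinery, exploiting the fact that $M$ is a wedge of circles so that $\mathrm{Maps}(M, BU_r)$ is an iterated fibre product of copies of $LBU_r$ and $BU_r$. First I would identify $B\G$ concretely: by (\ref{erceb}), $B\,\mathrm{Maps}(M,U_r) \cong \mathrm{Maps}(M,BU_r)$, and since $M = \bigvee_{i=1}^m S^1_i$, the restriction maps give a homotopy equivalence $\mathrm{Maps}(M,BU_r) \cong LBU_r \times_{BU_r} \cdots \times_{BU_r} LBU_r$ ($m$ factors, fibre product over evaluation at the wedge point). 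The subgroup $\G$ replaces the first $p$ factors of $LBU_r$ by the based loop-mapping space whose classifying space is $L_0BU_r$-with-basepoint-fixed, i.e. effectively $\Omega BU_r \simeq U_r$ fibred trivially; concretely $B\G$ is the fibre product over $BU_r$ of $p$ copies of (the total space of) $\Omega BU_r \to \mathrm{pt} \to BU_r$ — wait, more precisely of $p$ copies of $U_r$ (with trivial map to $BU_r$ after the based condition) and $m-p$ copies of $LBU_r$. So $B\G \simeq (U_r)^{\times p} \times \big( LBU_r \times_{BU_r}\cdots\times_{BU_r} LBU_r\big)$ with $m-p$ loop factors.

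Next I would compute the cohomology. By Proposition \ref{aolercubaloxe}, each $LBU_r \to BU_r$ has fibre $U_r$ totally non-homologous to zero mod $2$, so by Leray--Hirsch $H^*(LBU_r) \cong H^*(U_r)\otimes H^*(BU_r)$ as $H^*(BU_r)$-modules; iterating the fibre product over $BU_r$ (each stage again satisfies Leray--Hirsch because the fibre inclusion stays a cohomology surjection) yields
\begin{equation*}
H^*\big(LBU_r\times_{BU_r}\cdots\times_{BU_r}LBU_r\big) \cong H^*(BU_r)\otimes H^*(U_r)^{\otimes(m-p)}
\end{equation*}
for the $m-p$ unbased factors, and then a further $H^*(U_r)^{\otimes p}$ for the based factors (which split off as a genuine product, contributing exterior generators with no residual polynomial part). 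Since $H^*(U_r;\Z_2)$ is an exterior algebra on generators $e_1,\dots,e_r$ with $|e_k| = 2k-1$, and $H^*(BU_r;\Z_2) = S(c_1,\dots,c_r)$, assembling the pieces gives exactly a polynomial algebra on $r$ generators of degrees $2,4,\dots,2r$ tensor an exterior algebra on generators of degrees $2k-1$ indexed by circle $i$ and Chern degree $k$, with the single exception that the generator attached to $c_1$ on a based circle ($i\le p$) is killed — matching the claimed index set.

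The remaining, and genuinely essential, step is to \emph{identify} these abstractly-produced generators with the slant-product classes $c_k = t([pt]\otimes c_k)$ and $\bar c_{i,k} = t([S^1_i]\otimes c_k)$. Here I would use the naturality of $t$ (Proposition \ref{aeoucgxalbrcu}): the polynomial generators $c_k$ pull back from $H^*(BU_r)$ along $B\G \to BU_r$ (the "constant map" section), so $t([pt]\otimes c_k)$ is literally $ev^*(c_k)$ restricted to $\{pt\}\times B\G$, which is the image of $c_k$ under $H^*(BU_r)\to H^*(B\G)$; these are precisely the Leray--Hirsch base classes. For the exterior generators, restrict $ev$ to $S^1_i \times B\G$ and then further to $S^1_i \times (\text{the } i\text{-th factor})$: the map $S^1_i\times LBU_r \to BU_r$ is the adjoint of the identity, so $t([S^1_i]\otimes c_k)$ is the transgression/suspension of $c_k$, i.e. the exterior class $e_k$ in the $i$-th $H^*(U_r)$ factor (up to decomposables, which is enough since we only need a generating set). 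The vanishing of $\bar c_{i,k}$ for $k=1$, $i\le p$ is then automatic: on a based circle the loop is nullhomotopic so the corresponding restricted $ev$ factors through a point in the $U_r$-direction for the degree-$1$ (i.e. lowest) transgression — more carefully, the based condition precisely quotients out the $S^1$-direction of $ev$ at the $c_1$ level. I expect this last identification — pinning down that the slant products generate, not merely that an abstract iso exists — to be the main obstacle, and I would handle it by the factor-by-factor restriction argument above together with a dimension/degree count showing the $t$-classes span the indecomposables in each bidegree.
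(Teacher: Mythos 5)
Your structural identification of $B\G$ is wrong, and the error is exactly at the place the index set of the lemma is supposed to come from. The condition on the first $p$ circles is that the restriction is a \emph{contractible loop in $U_r$}, i.e.\ lies in the identity component $L_0U_r \subset LU_r$ (winding number zero); it is not a based condition, and it does not split off copies of $U_r$. Correctly, $\G$ is the fibre product over $U_r$ (evaluation at the wedge point) of $p$ copies of $L_0U_r$ and $m-p$ copies of $LU_r$, so $B\G$ fibres over $BU_r$ with fibre $(B\Omega_0 U_r)^p \times (B\Omega U_r)^{m-p} \simeq (SU_r)^p \times (U_r)^{m-p}$ --- the $i\le p$ circles contribute exterior classes only in degrees $3,5,\dots,2r-1$, which is precisely why $\bar c_{i,1}$ is absent from the generating set. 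Your claimed equivalence $B\G \simeq (U_r)^{\times p}\times\bigl(LBU_r\times_{BU_r}\cdots\times_{BU_r}LBU_r\bigr)$ manufactures $p$ spurious degree-one exterior classes: already for $m=p=1$ one has $\G = L_0U_r$ connected, hence $H^1(B\G;\Z/2)=0$, whereas $U_r\times BU_r$ has $H^1\neq 0$; in general $H^1(B\G;\Z/2)\cong(\Z/2)^{m-p}$, not $(\Z/2)^m$. Your later remark that ``the generator attached to $c_1$ on a based circle is killed'' cannot repair this, and in fact contradicts your own computation: if $H^*(B\G)$ really contained a full $H^*(U_r)^{\otimes p}$ factor, then the classes listed in the lemma would fail to generate and the asserted tensor decomposition would be false, not merely harder to prove.

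For comparison, the paper's argument runs differently at both stages. It uses the fibration $(B\Omega_0U_r)^p\times(B\Omega U_r)^{m-p}\to B\G\to BU_r$ (with $B\Omega_0U_r\simeq SU_r$), reduces to the case $p=0$ because $B\G\to B\mathrm{Maps}(M,U_r)$ is a $\Z/2$-cohomology surjection, and then, instead of your direct transgression argument (which needs care inside a fibre product and only identifies generators modulo decomposables), it embeds $M$ as a retract of a closed surface, so that $B\mathrm{Maps}(M,U_r)\to B\mathrm{Maps}(\Sigma,U_r)$ is a retract, and transports Atiyah--Bott's result for surface gauge groups via the naturality of the slant construction $t$ (Proposition \ref{aeoucgxalbrcu}); Leray--Hirsch then finishes. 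If you fix the identification of the fibre as $(SU_r)^p\times(U_r)^{m-p}$, your Leray--Hirsch strategy is viable, but you would still need a cleaner justification that the slant classes $\bar c_{i,k}$ restrict to a basis of the fibre cohomology than the ``up to decomposables'' transgression sketch you give.
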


\begin{proof}
We make use of a similar result stated for surface gauge groups and integral coefficients from \cite{ab2} Prop. 2.20. 

Restriction to the base point determines a fibration sequence
$$ (B\Omega_0U_r)^p \times (B\Omega U_r)^{m-p} \rightarrow B\G \rightarrow BU_r $$
where we have homotopy equivalences $B\Omega U_r \cong U_r$ and $B\Omega_0 U_r \cong SU_r$.  By the Leray-Hirsch theorem, it suffices to show that the classes $\bar{c}_{i,k}$ generate an exterior algebra that restricts to an isomorphism to the cohomology of the fibre.  Indeed, the inclusion $B\G \rightarrow BMaps(M,U_r)$ is a cohomology surjection, so it is enough to establish the case $p=0$.
Choose an embedding of  $M \hookrightarrow \Sigma$ as a retract in a closed surface (which must have genus at least $2m$). This induces an inclusion map $BMaps(M, U_r) \rightarrow BMaps(\Sigma,U_r)$ as a retract and thus a cohomology surjection.  The classes $\bar{c}_{i,k}$  are identified with the image of the classes $b_k^i$ of \cite{ab2} according to the functoriality of Proposition \ref{aeoucgxalbrcu}, so they form an exterior algebra that restricts isomorphically to the fibres.  \end{proof}

\section{Real gauge groups}\label{sect5}

Let $(M,\sigma)$ be a finite cell complex $M$ equipped with an automorphism $\sigma \in Aut(M)$ such that $\sigma^2= Id_M$.  A topological real vector bundle $(E,\tau)$ over $(M,\sigma)$ consists of a $\C^r$-vector bundle $\pi: E \rightarrow M$ and an antilinear bundle involution $E \rightarrow E$ such that $\tau^2 = Id_E$ and $\pi \circ \tau = \sigma \circ \pi$.

\begin{definition}
Given a real bundle $(E,\tau)$, the \emph{real gauge group} is defined $$\G_{\C}(E, \tau)  = \{ g \in \G_{\C}(E) |~g\tau = \tau g \}.$$
\end{definition}
We prefer to work with the unitary version of real gauge groups.  Fix a Hermitian metric on $E$ that is compatible with $\tau$ in the sense that orthonormal frames are sent to orthonormal frames.  Then we define
$$ \G(E, \tau)  = \G(E) \cap  \G_{\C}(E, \tau).$$
The inclusion $ \G(E, \tau) \hookrightarrow \G_{\C}(E, \tau)$ is a homotopy equivalence, because the coset space $  \G_{\C}(E, \tau) /  \G(E, \tau)$  can be identified with the convex space of $\tau$-compatible Hermitian metrics.  Thus for our purposes $ \G(E, \tau)$ and $\G_{\C}(E, \tau)$ are interchangeable.

The conjugation action $\Z/2 \curvearrowright U_r$, sending a matrix $[a_{i,j}]$ to $[\overline{a_{i,j}}]$ induces an involution on $BU_r$. Given a $\Z/2$-space $(X,\sigma)$, consider the space $Maps^{\Z/2}(X, BU_r)$ of equivariant maps. 

\begin{prop}\label{lracldio}
Isomorphism classes of topological real bundles $(E,\tau)$ over a finite $\Z/2$-cell complex $(X,\sigma)$ are classified by $\pi_0(Maps^{\Z/2}(X, BU_r))$. The classifying space $B\G_E^{\tau}$ is identified with the path component $Maps^{\Z/2}_E(X, BU_r)$ classifying $(E,\tau)$.
\end{prop}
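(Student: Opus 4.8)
The plan is to recognize this as an equivariant version of the standard Atiyah–Bott fact (their equation~\eqref{erceb}) that $B\G_P \cong Maps_P(M,BG)$, and to obtain the $\Z/2$-equivariant statement by applying the non-equivariant result to the doubled data and then taking fixed points. First I would set up the Borel-type construction: the conjugation action on $U_r$ commutes with the group multiplication only in the ``twisted'' sense ($\overline{gh} = \bar g\,\bar h$), so $\Z/2$ acts on $U_r$ by automorphisms, hence on $EU_r$ and $BU_r$ via the Milnor join model (functoriality of $B$, as used in Proposition~\ref{aeoucgxalbrcu}); this exhibits the conjugation involution on $BU_r$. A real bundle $(E,\tau)$ is, by definition, a $U_r$-bundle with an antilinear lift of $\sigma$ squaring to the identity; equivalently it is a $\Z/2$-equivariant principal bundle for the group $U_r$ with its $\Z/2$-action, in the sense of Lashof / tom~Dieck. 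The classifying space for such objects is $Maps^{\Z/2}(X,BU_r)$ with $BU_r$ carrying the conjugation action, and $\pi_0$ of this mapping space enumerates the iso classes. This gives the first sentence.

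For the identification of $B\G(E,\tau)$, I would run the same argument that proves~\eqref{erceb} but keep track of the $\Z/2$-action throughout. Concretely: the real gauge group $\G_\C(E,\tau)$ (equivalently $\G(E,\tau)$, which is a deformation retract) is the fixed-point group $\G(E)^\tau$, where $\tau$ acts on $\G(E) = Maps_{U_r}(P,U_r)$ by $g\mapsto \bar\tau\circ g\circ\tau^{-1}$ using the frame-bundle involution induced by $\tau$ together with conjugation on the target. Applying the $B$-functor (Milnor join model) and the naturality of Proposition in the excerpt that makes $B$ commute with this kind of equivariant data, we get that $B(\G(E)^\tau)$ is the fixed point set of the induced involution on $B\G(E) \simeq Maps_P(M,BU_r)$. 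The involution on the mapping space is precisely precomposition with $\sigma$ and postcomposition with conjugation on $BU_r$, so its fixed point set is $Maps^{\Z/2}(M,BU_r)$, and restricting to the path component that classifies $(E,\tau)$ gives $Maps^{\Z/2}_E(X,BU_r)$. One must check that taking fixed points commutes with the classifying-space construction in this setting — this is where the hypothesis that everything is a finite $\Z/2$-cell complex and that we use the concrete Milnor join (so that $E\G$, $B\G$ are built functorially and the $\Z/2$-action is cellular) does the work, exactly as in the proof of the Proposition following~\eqref{aoleruccorei}.

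The main obstacle I expect is the homotopy-theoretic subtlety in the phrase ``taking fixed points commutes with $B$'': in general $(BG)^{\Z/2} \neq B(G^{\Z/2})$, and one needs that for the particular $\Z/2$-action here the canonical map $B(\G(E)^\tau) \to (B\G(E))^\tau$ is a homotopy equivalence. The standard route is to note that $\G(E) \to \G(E)/\G(E)^\tau$-type obstructions vanish because the relevant coset space of $\tau$-compatible metrics (or of equivariant connections) is convex/contractible — which is exactly the argument already used in the excerpt to show $\G(E,\tau)\hookrightarrow\G_\C(E,\tau)$ is an equivalence — together with the fact that for a proper $\Z/2$-action on a space of the homotopy type of a $\Z/2$-CW complex, the Borel construction computes equivariant homotopy type and fixed points are well-behaved. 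I would cite tom~Dieck's \emph{Transformation Groups} and Lashof's work on equivariant bundle theory for the statement that equivariant principal $G$-bundles (for $G$ a topological group with $\Z/2$-action) over a finite $\Z/2$-CW complex are classified by equivariant maps into the appropriate equivariant classifying space, and remark that, because $U_r$ is connected and the action is by inner-plus-conjugation automorphisms, this equivariant classifying space is simply $BU_r$ with the conjugation action; everything else is a routine unwinding of the Milnor join model parallel to Atiyah--Bott's proof of~\eqref{erceb}.
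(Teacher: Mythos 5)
Your first sentence (classification of isomorphism classes by $\pi_0(Maps^{\Z/2}(X,BU_r))$ via Lashof/tom~Dieck equivariant bundle theory) is an acceptable citation-based route; the paper likewise defers this part to \cite{biswas2009moduli}. The problem is the second statement, which is the part actually requiring proof, and there your plan has a genuine gap at exactly the point you flag. You propose to deduce $B\G(E,\tau)\simeq Maps^{\Z/2}_E(X,BU_r)$ from the non-equivariant identification (\ref{erceb}) by ``taking fixed points,'' and to justify the needed commutation $B(\G(E)^{\tau})\simeq (B\G(E))^{\tau}$ (onto the right component) by (i) the convexity of the space of $\tau$-compatible metrics and (ii) general remarks about proper actions and the Borel construction. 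Neither does the job: the convexity argument only shows that $\G(E,\tau)\hookrightarrow\G_{\C}(E,\tau)$ is an equivalence, which is a different statement, and the Borel-construction remark concerns free actions and says nothing about genuine fixed points of the induced involution on $B\G(E)$. Moreover, even granting a Milnor-model analysis of $(B\G(E))^{\tau}$, the identification $B\G(E)\simeq Maps_E(X,BU_r)$ is only a homotopy equivalence, not a homeomorphism; an equivariant map that is a non-equivariant equivalence need not restrict to an equivalence on fixed-point sets, so you cannot transport the fixed-point computation across (\ref{erceb}) without a further argument. (Note also that the full fixed-point set of the involution on $Maps_E(X,BU_r)$ has components for every real structure on $E$, so at best $B(\G(E)^{\tau})$ is one of these components, and identifying which one is part of what must be proved.)

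The missing substance is a direct construction of a contractible space with a free $\G(E,\tau)$-action whose quotient is the stated component. The paper does this by identifying $\G(E,\tau)\cong Maps_{\hat{U}_r}(P,U_r)$, where $P$ is the unitary frame bundle and $\hat{U}_r=U_r\rtimes\Z/2$ acts through complex conjugation, letting $\hat{U}_r$ act on the Milnor model $EU_r=\lim_{\rightarrow}U_r^{*n}$, and checking that $Maps_{\hat{U}_r}(P,EU_r)$ is a $\G(E,\tau)$-bundle over the component $Maps^{\Z/2}_E(X,BU_r)$. The crux is then the contractibility of $Maps_{\hat{U}_r}(P,EU_r)$, proved by a Dold-type join argument: compactness of $P$ gives $Maps_{\hat{U}_r}(P,EU_r)=\lim_{\rightarrow}Maps_{\hat{U}_r}(P,U_r^{*n})$, and each inclusion $Maps_{\hat{U}_r}(P,U_r^{*n})\hookrightarrow Maps_{\hat{U}_r}(P,U_r^{*(m+n)})$ factors through a join $X\rightarrow X*Y$ and is hence null-homotopic. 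Some version of this contractibility argument (or an appeal to an equivariant classifying-space theorem strong enough to identify the gauge group's classifying space, not just $\pi_0$) is what your proposal would need to supply to close the gap.
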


\begin{proof}

The classification of isomorphism classes of bundles by $\pi_0(Maps^{\Z/2}(X, BU_r))$ is proven in \cite{biswas2009moduli} section 4, so we concentrate on the second statement.  

Let $(E,\tau) \rightarrow (X,\sigma)$ be a fixed topological real bundle, let $P \rightarrow X$ denote the unitary frame bundle, and let $\hat{U}_r = U_r \rtimes \Z/2$ be the semidirect defined by complex conjugation on $U_r$.  Then there is a natural identification
 $$\G_E^{\tau} \cong Maps_{\hat{U}_r}( P, U_r)$$
with the equivariant maps from $P$ to $U_r$.  If we represent $EU_r$ by the Milnor join construction, then $EU_r$ acquires a $\hat{U}_r$ action, and the space $Maps_{\hat{U}_r}( P, EU_r)$ forms a $\G_E^{\tau}$-bundle in a natural way, such that the orbit space $Maps_{\hat{U}_r}( P, EU_r) / \G_E^{\tau}$ is identified with the component of $Maps^{\Z/2}(X, BU_r)$ classifying $(E,\tau)$.

It remains to prove that $Maps_{\hat{U}_r}( P, EU_r)$ is contractible. We adapt an argument of Dold (\cite{dold1963partitions} section 8).  Recall that Milnor constructs $EU_r$ as the direct limit $\lim_{\rightarrow} U_r^{* n}$, where $U_r^{* n}$ denotes the n-fold join of $U_r$.  Because $P$ is a compact cell complex, it follows that
$$Maps_{\hat{U}_r}( P, EU_r) = \lim_{\rightarrow}Maps_{\hat{U}_r}( P, U_r^{* n}). $$
To prove that $Maps_{\hat{U}_r}( P, EU_r)$ is contractible, it suffices to show that for all $n$ there is some $m$ such that the inclusion

\begin{equation}\label{recloh}
Maps_{\hat{U}_r}( P, U_r^{* n}) \hookrightarrow Maps_{\hat{U}_r}(P, U_r^{* (m+n)})
\end{equation} 
is null-homotopic. The map (\ref{recloh}) factors through the inclusion 
$$ Maps_{\hat{U}_r}( P, U_r^{* n})  \stackrel{i}{\rightarrow}  Maps_{\hat{U}_r}( P, U_r^{* n}) * Maps_{\hat{U}_r}( P, U_r^{* m})  $$
and for any non-vacuous spaces $X$ and $Y$, the inclusion $X \rightarrow X * Y$ is null-homotopic, completing the proof. An explicit contraction can be constructed along the lines of \cite{dold1963partitions}.

\end{proof}

\subsection{Real loop groups}\label{orlecbularc}

A \emph{real loop group} is simply a real gauge group for a real bundle $(E,\tau)$ over $(S^1,\sigma)$ where $\sigma: S^1 \rightarrow S^1$ is an involution.  We consider two cases:   $\sigma = Id_{S^1}$  the identity map and $\sigma = -Id_{S^1}$ the antipodal map. As usual, we work with the Hermitian version $LU_r^{\tau} \subset LGL_r(\C)^{\tau}$.

\begin{prop}\label{eloced}
For any positive rank $r$, there are two isomorphism classes of topological real $\C^r$-bundles over $(S^1, Id_{S^1})$.  They are classified by the first Stieffel-Whitney number $w_1(E^{\tau}) \in H^1(S^1;\Z/2) = \Z/2$. 
\end{prop}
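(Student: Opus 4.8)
The plan is to reduce the classification of real bundles over $(S^1, Id_{S^1})$ to the classification of $\R^r$-bundles (equivalently $O_r$-bundles) over $S^1$, using the fixed-point functor $(E,\tau) \mapsto E^{\tau}$. First I would observe that when $\sigma = Id_{S^1}$, a real structure $\tau$ on a $\C^r$-bundle $E \to S^1$ has fixed-point locus $E^{\tau}$ which is a genuine $\R^r$-subbundle of $E$ (the $+1$-eigenbundle of the antilinear involution on each fiber), and that $E = E^{\tau} \otimes_{\R} \C$ is recovered by complexification; conversely any real $\R^r$-bundle $V \to S^1$ gives a real bundle $(V \otimes_{\R} \C, \mathrm{conj})$. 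One should check that this sets up a bijection between isomorphism classes of real bundles $(E,\tau)$ over $(S^1, Id)$ and isomorphism classes of real vector bundles over $S^1$ — this is a standard fact, and it is also precisely the specialization of Theorem~\ref{arecbu} to the case where $\Sigma = \Sigma^\sigma = S^1$, so in fact one could simply quote Theorem~\ref{arecbu} here: rank and $w_1(E^\tau) \in H^1(S^1;\Z_2)$ are the invariants, the degree being forced to $0$ on $S^1$ (and the congruence $d \equiv w_1(E^\tau)(S^1) \bmod 2$ becomes vacuous/automatic).

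Next I would invoke the classical classification of real vector bundles over $S^1$: since $S^1$ has the homotopy type of a CW complex with one $0$-cell and one $1$-cell, an $\R^r$-bundle is determined by a clutching element of $\pi_0(O_r) = \Z/2$, so there are exactly two isomorphism classes, distinguished by $w_1 \in H^1(S^1;\Z/2) = \Z/2$ (the trivial bundle versus the bundle that is a Möbius band summed with a trivial $\R^{r-1}$). Transporting this back along the bijection from the first paragraph gives exactly two isomorphism classes of real $\C^r$-bundles over $(S^1, Id_{S^1})$, classified by $w_1(E^\tau)$, which is the assertion. Alternatively, and perhaps more in the spirit of the surrounding sections, one could argue via Proposition~\ref{lracldio}: isomorphism classes are $\pi_0(\mathrm{Maps}^{\Z/2}(S^1, BU_r))$ where $\Z/2$ acts trivially on $S^1$ and by complex conjugation on $BU_r$; an equivariant map $S^1 \to BU_r$ with trivial action on the source is the same as a map $S^1 \to (BU_r)^{\Z/2} = BO_r$ (the fixed locus of complex conjugation on $BU_r$ being $BO_r$), so $\pi_0 = [S^1, BO_r] = \pi_1(BO_r) = \pi_0(O_r) = \Z/2$.

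I do not expect any serious obstacle here — the statement is essentially a bookkeeping specialization of results already available (Theorem~\ref{arecbu}, Proposition~\ref{lracldio}, and elementary bundle theory over $S^1$). The one point requiring a small amount of care is the identification $(BU_r)^{\Z/2} = BO_r$ for the conjugation action, i.e. that the fixed points of complex conjugation on the classifying space recover the classifying space of the orthogonal group; this is standard (it follows, e.g., from realizing $BU_r$ as the Grassmannian of $r$-planes in $\C^\infty$ with conjugation, whose fixed locus is the real Grassmannian), but it is the step where one should be explicit. With that in hand the count of two components and the identification of the invariant as $w_1(E^\tau)$ are immediate.
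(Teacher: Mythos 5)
Your proposal is correct, and your ``alternative'' argument is in fact exactly the paper's proof: by Proposition~\ref{lracldio} the isomorphism classes are $\pi_0(\mathrm{Maps}^{\Z/2}(S^1,BU_r))$, and since the involution on $S^1$ is trivial an equivariant map lands in the conjugation-fixed locus $BO_r\subset BU_r$, so the set of classes is $[S^1,BO_r]=\pi_1(BO_r)=\Z/2$, detected by $w_1$. Your primary route (pass to the fixed subbundle $E^\tau$, recover $E$ by complexification, and quote the clutching-function classification of $\R^r$-bundles over $S^1$) is an equally elementary and perfectly valid variant; it buys a classification statement without invoking the equivariant classifying-space machinery, at the cost of having to verify that $(E,\tau)\mapsto E^\tau$ and $V\mapsto(V\otimes_\R\C,\mathrm{conj})$ are mutually inverse on isomorphism classes. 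One small caveat: you cannot literally ``specialize Theorem~\ref{arecbu} to $\Sigma=\Sigma^\sigma=S^1$,'' since that theorem classifies real bundles over a real \emph{curve} $(\Sigma,\sigma)$ (a closed complex $1$-manifold), not over a circle; the circle only arises there as a component of $\Sigma^\sigma$. This appeal is inessential to your argument, since both your direct complexification argument and the argument via Proposition~\ref{lracldio} stand on their own.
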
  

\begin{proof}
Equivariant maps from $(S^1, Id)$ to $BU_r$ are the same thing as maps $S^1$ to $BO_r \subset BU_r$. Up to homotopy, these are in correspondence with $\pi_1(BO_r) = \Z/2$ and correspond to a choice of first Stieffel-Whitney class.
\end{proof}

\begin{prop}\label{arclricdo}
For any positive rank $r$, there is only one topological real bundle over $(S^1, -Id_{S^1})$ up to isomorphism.
\end{prop}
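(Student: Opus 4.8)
The plan is to reduce the classification to an equivariant homotopy computation, as in Proposition \ref{lracldio}, and then show that the relevant set $\pi_0(\Maps^{\Z/2}((S^1,-\Id),BU_r))$ is a singleton. The key geometric point is that the antipodal involution on $S^1$ is free, so the $\Z/2$-space $(S^1,-\Id_{S^1})$ is the total space of a principal $\Z/2$-bundle over the quotient circle $S^1/(\pm 1)\cong S^1$. Equivariant maps from a free $\Z/2$-space into a $\Z/2$-space $Y$ are, up to equivariant homotopy, classified by honest maps of the quotient into the Borel construction $Y_{h\Z/2} = Y\times_{\Z/2}E\Z/2$; more precisely, since $S^1$ with the antipodal action is $(S^0$-join-)$E\Z/2$ through dimension $0$ and the target $BU_r$ is connected, an equivariant map $S^1\to BU_r$ is the same datum as a section of the bundle over $S^1/(\pm1)$ with fibre $BU_r$ associated to the double cover $S^1\to S^1/(\pm 1)$ via the conjugation action $\Z/2\curvearrowright U_r$.

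The concrete steps: (1) Identify, using the free action, $\Maps^{\Z/2}((S^1,-\Id),BU_r)$ with the space of sections of the $BU_r$-bundle $E\to S^1/(\pm1)$ associated to the connected double cover and the conjugation action. (2) Since $\pi_0(BU_r)=\ast$ and $\pi_1(BU_r)=0$, the primary obstruction to the existence of a section vanishes, and more relevantly two sections over a $1$-complex are homotopic through sections provided $\pi_1$ of the fibre vanishes — which it does here since $\pi_1(BU_r)=\pi_0(U_r)=0$ as $U_r$ is connected. (3) Conclude that the section space is nonempty and path-connected, hence $\pi_0=\ast$. (4) Invoke the classification in Proposition \ref{lracldio} (equivalently \cite{biswas2009moduli} section 4) to translate this into the statement that there is a unique topological real bundle over $(S^1,-\Id_{S^1})$.

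An alternative, more hands-on route avoids obstruction theory entirely: a real bundle over $(S^1,-\Id_{S^1})$ is equivalent to a complex vector bundle over the quotient circle together with a "conjugate-linear" clutching datum along the preimage of a point; since every complex bundle over $S^1$ is trivial and $U_r$ (hence its conjugation-fixed coset space) is connected, all such clutching data are isotopic, yielding a single isomorphism class. Either way, the one genuine subtlety — and the step I would be most careful about — is the comparison between equivariant homotopy classes of maps out of the free $\Z/2$-space $S^1$ and sections over the quotient: one must check that no low-dimensional obstruction or $\pi_1$-action on fibres survives, and here the decisive fact is simply that $U_r$ is connected (equivalently $BU_r$ is simply connected), in sharp contrast to the $\sigma=\Id$ case of Proposition \ref{eloced} where $\pi_1(BO_r)=\Z/2$ produces two classes.
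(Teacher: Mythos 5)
Your argument is correct, and it is worth noting how it relates to the paper's: both proofs reduce the classification to showing that $\mathrm{Maps}^{\Z/2}((S^1,-\mathrm{Id}_{S^1}),BU_r)$ is path-connected (via Proposition \ref{lracldio}), but at that point the paper simply cites \cite{biswas2009moduli}, Section 4.1, for the fact that any equivariant map $S^1\to BU_r$ can be equivariantly contracted, whereas you supply a self-contained proof. Your key observation --- that the antipodal involution is free, so equivariant maps are literally the same as sections of the associated $BU_r$-bundle over the quotient circle, and then obstruction theory over a $1$-complex needs only $\pi_0(BU_r)=\ast$ and $\pi_1(BU_r)=\pi_0(U_r)=0$ --- is exactly the right mechanism, and it makes transparent the contrast with Proposition \ref{eloced}, where $\sigma=\mathrm{Id}$ forces one into $\pi_1(BO_r)=\Z/2$ and hence two classes. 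Your alternative clutching argument (trivialize over a fundamental domain for the free action and observe that the space of antilinear gluing data is a connected space homeomorphic to $U_r$) is also sound and is essentially the argument in the cited reference. The only place to be careful is the one you flag yourself: the identification of equivariant maps with sections is exact (not merely up to homotopy) precisely because the action on $S^1$ is free, and vertical homotopy classes of sections compute $\pi_0$ of the section space; with that said, the proof is complete and arguably more informative than the paper's citation.
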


\begin{proof}
Any equivariant map from $S^1$ to $BU_r$ can be equivariantly contracted to a point (see \cite{biswas2009moduli} section 4.1).  
\end{proof}

\begin{remark}\label{alrocebucrao}
The path components $[\gamma] \in \pi_0(LU_r)$ are classified by the winding number of the map $$S^1 \rightarrow U(1),~~~~~~~~\theta \mapsto \det(\gamma(\theta)).$$ It is easily checked that for the examples above, $LU^{\tau}_r$ is contained in the identity component $L_0U_r \subset LU_r$.
\end{remark}

\subsection{Cohomology of real loop groups}

In this section we compute the $\Z/2$-Betti numbers of real loop groups $BLU_r^{\tau}$ and describe the map $$i^*: H^*(BLU_r)\rightarrow H^*(BLU_r^{\tau})$$ induced by inclusion. Recall from Lemma \ref{oaarceublrc} that $H^*(BLU_r) \cong \wedge(\bar{c}_1,...,\bar{c}_r) \otimes S(c_1,...,c_r)$. The main takeaway is the following corollary.

\begin{cor}\label{orecubrobeui}
For the real loop groups described in Proposition \ref{eloced} and \ref{arclricdo},  $H^*(BLU_r^{\tau})$ is a free $H^*(BU_r) =S(c_1,...,c_r)$ module on which $\wedge(\bar{c}_1,...,\bar{c}_r)$ acts trivially. The Poincar\'e series satisfy
$$ P_t( BLU_r^{\tau}) = \frac{1}{1+t^r} \prod_{k=1}^r\frac{(1+t^k)^2}{1-t^{2k}} $$
for $\sigma = Id_{S^1}$ independently of $\tau$, and
 $$  P_t( BLU_r^{\tau}) =         \prod_{k=1}^r \frac{1+t^{2k-1}}{1-t^{2k}} $$
  for $\sigma = -Id_{S^1}$.
\end{cor}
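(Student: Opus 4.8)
The plan is to compute $H^*(BLU_r^\tau)$ directly as an algebra via the fibration from Proposition \ref{aolercubaloxe}, restricted to the real locus, and then extract the Poincar\'e series. First I would set up the equivariant version of the fibration sequence \eqref{oleuablcroeb}: since $BLU_r^\tau$ is a path component of $Maps^{\Z/2}(S^1, BU_r)$ by Proposition \ref{lracldio}, evaluation at a chosen point of $S^1$ gives a fibration whose base is either $(BU_r)^{\Z/2} = BO_r$ (when $\sigma = Id$, so the basepoint is fixed) or $BU_r$ (when $\sigma = -Id$, where one takes a non-fixed point and its orbit), and whose fibre is a real/ordinary loop space of $BU_r$. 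Concretely, for $\sigma = -Id_{S^1}$ the space of equivariant maps from $(S^1,-Id)$ to $BU_r$ retracts onto $Maps(\text{interval}, BU_r)$ rel the twisted endpoint condition, giving fibre $\Omega BU_r \simeq U_r$ over $BU_r$; for $\sigma = Id_{S^1}$ one gets fibre $\Omega(BU_r)^{\Z/2}$-type data, i.e. a based real loop space of $BU_r$, over $BO_r$.

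The key computational step is to show these fibrations are totally non-homologous to zero mod $2$, exactly as in Proposition \ref{aolercubaloxe}. For $\sigma = -Id_{S^1}$ I would argue that $BLU_r^\tau \to BU_r$ has a section up to homotopy (constant equivariant maps exist since by Proposition \ref{arclricdo} the bundle is unique and deforms to a constant, cf. Remark \ref{alrocebucrao}), and combine this with the stable-range trivialization trick: map $U_r \to U$, use that $BU$ is an infinite loop space to trivialize the analogous fibration in the limit, then pull back via the $\Z/2$-cohomology surjection $U_r \to U$. Leray--Hirsch then gives $H^*(BLU_r^\tau) \cong H^*(U_r) \otimes H^*(BU_r)$ as an $H^*(BU_r)$-module, whence $P_t(BLU_r^\tau) = P_t(U_r) P_t(BU_r) = \prod_{k=1}^r (1+t^{2k-1}) \cdot \prod_{k=1}^r \frac{1}{1-t^{2k}}$, which is the claimed formula. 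For $\sigma = Id_{S^1}$ the fibre is morally the space of paths in $BU_r$ from a point of $BO_r$ back to itself after conjugation, which has the homotopy type related to $U_r/O_r$ together with a copy of $O_r$; I would instead realize $BLU_r^\tau$ over $(S^1, Id)$ as $Maps(S^1, BO_r)$-type data inside $Maps(S^1, BU_r)$ — no, more carefully: an equivariant map $(S^1, Id) \to (BU_r, \text{conj})$ is a map $S^1 \to BO_r$, so $BLU_r^\tau \simeq $ a component of $L BO_r$. Then Proposition \ref{aolercubaloxe} applied with $G = O_r$ gives $H^*(L BO_r) \cong H^*(O_r) \otimes H^*(BO_r)$, and after dividing out by the component count / correcting for $\pi_0$ (the factor $\frac{1}{1+t^r}$ reflects passing from $O_r$ to $SO_r$, i.e. $P_t(BO_r) = \frac{1}{1+t^r}\prod_{k=1}^r\frac{1}{1-t^k}$ is false — rather $P_t(BSO_r)$ vs $P_t(BO_r)$), one reads off $P_t(BLU_r^\tau) = P_t(O_r) P_t(BO_r)$; writing $P_t(O_r) = \prod_{k=1}^r(1+t^k) \cdot$ correction and $P_t(BO_r) = \prod_{k=1}^r \frac{1}{1-t^k}$ and accounting for the identity-component normalization produces $\frac{1}{1+t^r}\prod_{k=1}^r \frac{(1+t^k)^2}{1-t^{2k}}$.

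Finally, to get the statement about module structure — that $H^*(BLU_r^\tau)$ is free over $S(c_1,\dots,c_r)$ with $\wedge(\bar c_1,\dots,\bar c_r)$ acting trivially — I would trace through the map $i^*: H^*(BLU_r) \to H^*(BLU_r^\tau)$ using naturality of the slant-product operator $t$ from Proposition \ref{aeoucgxalbrcu}. The classes $c_k = t([pt]\otimes c_k)$ pull back to the analogous polynomial generators (this identifies the free $S(c_1,\dots,c_r)$-module structure), while each $\bar c_k = t([S^1]\otimes c_k)$ maps to $t$ applied to the fundamental class of $S^1$ with its $\Z/2$-action: since $[S^1] \in H_1(S^1;\Z_2)$ is $\sigma$-equivariantly trivial (for $\sigma = Id$, $H_1^{\Z/2}$ pushes it to zero; for $\sigma = -Id$, $S^1$ is a free $\Z/2$-space with quotient $S^1$ and the transfer argument kills the relevant class), the image $i^*(\bar c_k)$ vanishes, giving the triviality claim.

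The main obstacle I anticipate is the $\sigma = Id_{S^1}$ case: identifying the fibre of the evaluation fibration precisely and proving it is totally non-homologous to zero requires care, because the natural base $BO_r$ does not have the clean infinite-loop-space structure that $BU$ has, so the stable-trivialization trick needs the analogous (and true, but more delicate) statement that $BO$ is an infinite loop space by Bott periodicity, together with checking that $O_r \to O$ is a mod-$2$ cohomology surjection and that $\pi_1(BO_r)$ acts trivially on $H^*(\text{fibre})$ — essentially re-running the proof of Proposition \ref{aolercubaloxe} but keeping track of the extra $\frac{1}{1+t^r}$ bookkeeping coming from the distinction between $O_r$ and its identity component, which is where sign/component errors are easiest to make.
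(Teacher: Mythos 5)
Your $\sigma = Id_{S^1}$ argument is essentially the paper's: identify $BLU_r^{\tau}$ with a path component of $LBO_r$ and apply Proposition \ref{aolercubaloxe} with $G=O_r$ (this is Proposition \ref{robelurcbe}), the $\frac{1}{1+t^r}$ coming from restricting the fibre $\Omega BO_r\simeq O_r$ to one component, i.e.\ replacing $H^*(O_r)$ by $H^*(SO_r)$. The computation $i^*(\bar c_k)=t([S^1]\otimes w_k^2)=2\bar w_k w_k=0$ via naturality of $t$ is also exactly what the paper does.

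The $\sigma = -Id_{S^1}$ case, however, has a genuine gap. First, the evaluation fibration $ev_1\colon BLU_r^{\tau}\to BU_r$ admits no section, not even up to homotopy: a (homotopy) section $s$ would give, via $H(x,t)=s(x)(e^{i\pi t})$, a homotopy from $Id_{BU_r}$ to the conjugation involution, which is impossible since conjugation acts by $-1$ on $c_1\in H^2(BU_r;\Z)$. Your justification (``constant equivariant maps exist'') fails because a constant map is equivariant only if its value lies in the fixed locus $BO_r\subset BU_r$; Proposition \ref{arclricdo} says each equivariant map can be contracted to such a constant, which is a statement about $\pi_0$, not about sections of $ev_1$. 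Second, the stable trivialization also fails: the obvious ``divide by the basepoint value'' map $f\mapsto f\cdot f(1)^{-1}$ is not equivariant (one would need $f(1)=\overline{f(1)}$), and in fact the stable total space $Maps^{\Z/2}((S^1,-Id),BU)$ is a component of the representing space of Atiyah's self-conjugate K-theory $KSC$, whose homotopy groups contain $2$-torsion, so it is not homotopy equivalent to $BU\times U$; only its mod $2$ Poincar\'e series agrees with the product. So cohomological triviality of $ev_1$ (hence freeness over $S(c_1,\dots,c_r)$ and the second Poincar\'e series) cannot be obtained this way. The paper's Proposition \ref{ealrcboeruc} gets around this by pulling the fibration back along the mod $2$ cohomology injection $BO_r\to BU_r$ using the subgroup $H\subset LU_r^{\tau}$ of loops whose value at $1$ lies in $O_r$, observing that the ``traverse the half circle at double speed'' embedding $i_2\colon H\to LU_r$ exhibits the same pullback as a pullback of the ordinary loop-group fibration (\ref{oleuablcroeb}), and then applying Lemma \ref{ebarecbiro} in both directions; some such device is needed to replace your section/trivialization step, and the same mechanism is what gives $i^*(\bar c_k)=0$ there (via the degree two self-map of $S^1$), rather than the transfer heuristic you sketch.
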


\begin{proof}
An immediate consequence of Propositions \ref{robelurcbe} and \ref{ealrcboeruc}.	
\end{proof}

\subsubsection{The case $\sigma = Id_{S^1}$}

\begin{prop}\label{robelurcbe}

Let $LU_r^{\tau}$ be a real loop group over $(S^1, Id_{S^1})$. Then
\begin{equation}\label{oeluarcbo}
H^*(BLU_r^{\tau})\cong  H^*(SO_r) \otimes S(w_1,...,w_r)
\end{equation}
with degrees $|w_k| =k$, as a graded free module over $S(w_1,...,w_r)$.  The inclusion induced map $i: BLU_r^{\tau} \rightarrow BLU_r$ satisfies $i^*(\bar{c}_k)= 0$ and $i^*(c_k)=w_k^{2}$. 
\end{prop}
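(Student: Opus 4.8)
The idea is to reduce the statement about the real loop group $LU_r^\tau$ over $(S^1, Id_{S^1})$ to a statement about the ordinary orthogonal group $O_r$, exactly mirroring the proof of Proposition \ref{aolercubaloxe}. First I would identify $LU_r^\tau$ more concretely. A real bundle $(E,\tau)$ over $(S^1, Id_{S^1})$ is the same as a $\Z/2$-equivariant bundle where $\Z/2$ acts trivially on the base, so its real gauge group is the group of equivariant gauge transformations. Since the involution on $U_r$ is complex conjugation with fixed subgroup $O_r$, one checks that $\G(E,\tau) \cong Maps(S^1, O_r) = LO_r$ when $E^\tau$ is the trivial $\R^r$-bundle (the class $w_1(E^\tau)=0$), and in general $\G(E,\tau)$ is the gauge group of the corresponding $O_r$-bundle over $S^1$. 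By Remark \ref{alrocebucrao}, both cases lie in the identity component and are homotopy equivalent, so it suffices to treat $LO_r$. Then by (\ref{erceb}) we have $BLU_r^\tau \cong L_0 BO_r$, the identity component of $Maps(S^1, BO_r)$.

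Next I would invoke Proposition \ref{aolercubaloxe} for $G = O_r$: the fibre of the evaluation fibration $\Omega BO_r \to LBO_r \xrightarrow{ev_1} BO_r$ is totally non-homologous to zero in $\Z/2$-coefficients, so by Leray--Hirsch $H^*(BLU_r^\tau) = H^*(L_0BO_r) \cong H^*(\Omega_0 BO_r) \otimes H^*(BO_r)$ as $H^*(BO_r)$-modules. Now $\Omega_0 BO_r \cong$ the identity component of $O_r$, which is $SO_r$, and $H^*(BO_r;\Z/2) = S(w_1,\dots,w_r)$ is a polynomial algebra on the Stiefel--Whitney classes with $|w_k| = k$. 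This gives precisely (\ref{oeluarcbo}): $H^*(BLU_r^\tau) \cong H^*(SO_r) \otimes S(w_1,\dots,w_r)$, free over $S(w_1,\dots,w_r)$. The Poincar\'e series formula in Corollary \ref{orecubrobeui} should then drop out by substituting the known $P_t(SO_r)$; that bookkeeping I would defer to the corollary's proof and not reproduce here.

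For the map $i^*: H^*(BLU_r) \to H^*(BLU_r^\tau)$, I would use naturality. The inclusion $LU_r^\tau \hookrightarrow LU_r$ is induced by $O_r \hookrightarrow U_r$ (equivalently $BO_r \hookrightarrow BU_r$ on the level of evaluation targets), and by Proposition \ref{aeoucgxalbrcu} the operator $t$ is natural with respect to this inclusion. Thus $i^*(c_k) = i^*(t([pt]\otimes c_k))$ equals $t([pt]\otimes j^*c_k)$ where $j: BO_r \to BU_r$, and $j^*(c_k) = w_k^2$ mod $2$ is the classical computation of Chern classes of a complexified real bundle. Similarly $i^*(\bar c_k) = t([S^1] \otimes j^*c_k) = t([S^1] \otimes w_k^2)$; since $w_k^2$ is a square and the slant product with $[S^1]$ is (up to sign) a derivation on such classes in characteristic $2$ — concretely, $\int_{[S^1]}(ev^*\alpha)^2 = 0$ because $ev^*(\alpha)^2$ is a square and in characteristic $2$ the slant product of a square with a $1$-cycle vanishes (it factors through the reduced coproduct, which kills squares in a polynomial ring, or more directly: $w_k^2$ pulls back from $BO_r$ via $ev_1$, hence its slant product with $[S^1]$ is zero) — we get $i^*(\bar c_k) = 0$.

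\textbf{Main obstacle.} The delicate point is the last one: showing $i^*(\bar c_k) = 0$. The cheap argument is that $\bar c_k = t([S^1]\otimes c_k)$ and after restriction $c_k$ becomes $w_k^2 \in \mathrm{im}(ev_1^*)$, i.e. pulls back from the basepoint $BO_r$; slant products with $[S^1]$ of classes pulled back along $ev_1$ vanish for degree/naturality reasons (the $S^1$-factor is "used up trivially"). One has to be a little careful that this is the correct interpretation of the slant product operator $t$ and that the identification of the fibre $\Omega_0 BO_r$ with $SO_r$ is compatible with everything, but modulo these checks the computation is forced. I would also want to double-check that $i^*$ is surjective onto the $S(w_1,\dots,w_r)$-subalgebra (so that the module description is tight), which again follows from the Leray--Hirsch splitting being compatible with the map of fibrations $SO_r \to BLU_r^\tau \to BO_r$ over $O_r \to BLU_r \to BU_r$.
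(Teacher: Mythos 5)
Your proposal is correct and takes essentially the same route as the paper: identify $BLU_r^{\tau}$ with a path component of $Maps^{\Z/2}(S^1,BU_r)=Maps(S^1,BO_r)=LBO_r$, apply Proposition \ref{aolercubaloxe} with $G=O_r$ (Leray--Hirsch over $BO_r$ with fibre of the homotopy type of $SO_r$), and compute $i^*$ by naturality of $t$ (Proposition \ref{aeoucgxalbrcu}) together with $c_k\mapsto w_k^2$ under complexification, the vanishing $i^*(\bar{c}_k)=t([S^1]\otimes w_k^2)=2\,\bar{w}_k w_k=0$ being exactly the paper's characteristic-two derivation argument. The only cosmetic wrinkle is your reduction to $LO_r$: when $w_1(E^{\tau})\neq 0$ the real gauge group is a twisted loop group rather than $LO_r$ itself (and Remark \ref{alrocebucrao} concerns the inclusion into $L_0U_r$, not the components of $Maps(S^1,BO_r)$), but since its classifying space is just the other component of $LBO_r$ the same Leray--Hirsch argument applies verbatim, which is how the paper treats it.
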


\begin{proof}[Proof of Proposition \ref{robelurcbe}]
In this case $\sigma$ acts trivially on $S^1$, so $BLU_r^{\tau}$ may be identified with one of the two path components of $Maps^{\Z/2}(S^1, BU_r) = Maps(S^1, BO_r) = LBO_r$, so (\ref{oeluarcbo}) follows immediately from Proposition \ref{aolercubaloxe} where the $w_i$ are the Stieffel-Whitney classes .

To study $i^*$, we have $i^*(c_k) = w_k^2$ (Milnor-Stasheff \cite{milnor1974cc} problem 15A) and by Proposition \ref{aeoucgxalbrcu}
$$ i^*(\bar{c}_k) = i^*(t( [S^1]\otimes c_k)) = t( [S^1]\otimes w_k^2 ) = 2 \bar{w}_k w_k =0.$$ 
\end{proof}

\subsubsection{The case $\sigma = -Id_{S^1}$}

We begin with a lemma.  We call a fibration $F \rightarrow E \rightarrow B$ \emph{cohomologically trivial} if $\pi_1(B)$ acts trivially on $H^*(F)$ and the Serre spectral sequence collapses so $H^*(E) \cong H^*(B)\otimes H^*(F)$ as a graded $H^*(B)$-module.

\begin{lem}\label{ebarecbiro}
Let $f: B' \rightarrow B$ be a continuous map of path-connected spaces for which $f^*: H^*(B)\rightarrow H^*(B')$ is injective and let $F \rightarrow E \rightarrow B$ be a Serre fibration with $\pi_1(B)$ acting trivially on $H^*(F)$. Then $E$ is cohomologically trivial if and only if the pull-back $f^*E$ is cohomologically trivial.
\end{lem}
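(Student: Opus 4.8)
The plan is to argue by comparing the Serre spectral sequences of the two fibrations $F \to E \to B$ and $F \to f^*E \to B'$ via the natural morphism induced by $f$. Write $(E_r^{*,*}, d_r)$ for the spectral sequence of $E \to B$ and $(\tilde E_r^{*,*}, \tilde d_r)$ for that of $f^*E \to B'$. Since $\pi_1(B)$ acts trivially on $H^*(F)$, both $E_2$-pages have the form $H^*(B;H^*(F))$ and $H^*(B';H^*(F))$ respectively, and the pullback map $f$ induces a morphism of spectral sequences which on $E_2$ is $f^* \otimes \id \colon H^*(B)\otimes H^*(F) \to H^*(B')\otimes H^*(F)$. (One checks $\pi_1(B')$ acts trivially on $H^*(F)$ too, since the local system on $B'$ is pulled back from $B$.) The key structural input is that $f^*$ is injective, so $f^*\otimes \id$ is injective on $E_2$.

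The forward direction is immediate: if $E$ is cohomologically trivial then all differentials $d_r$ vanish for $r \geq 2$; the Leray--Hirsch-type classes in $H^*(E)$ restricting isomorphically to $H^*(F)$ pull back to classes in $H^*(f^*E)$ restricting isomorphically to $H^*(F)$ (the fibre inclusion commutes with $f$), and hence $f^*E$ is cohomologically trivial by Leray--Hirsch. For the converse, suppose $f^*E$ is cohomologically trivial, so all $\tilde d_r = 0$. I will show by induction on $r$ that $d_r = 0$ and that the morphism $E_r \to \tilde E_r$ remains injective. The base case $r=2$ is the injectivity of $f^*\otimes\id$. For the inductive step, assuming $E_r \to \tilde E_r$ is injective and $\tilde d_r = 0$: the square
\begin{equation*}
\xymatrix{ E_r \ar[r]^{d_r} \ar[d] & E_r \ar[d] \\ \tilde E_r \ar[r]^{\tilde d_r = 0} & \tilde E_r }
\end{equation*}
commutes, so for any $x \in E_r$ the image of $d_r(x)$ in $\tilde E_r$ is zero; by injectivity $d_r(x) = 0$, hence $d_r = 0$. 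Then $E_{r+1} = E_r$ and $\tilde E_{r+1} = \tilde E_r$, and the induced map $E_{r+1} \to \tilde E_{r+1}$ is again the (still injective) map $E_r \to \tilde E_r$, completing the induction. Thus all $d_r$ vanish, so the Serre spectral sequence of $E \to B$ collapses at $E_2$, giving $H^*(E) \cong H^*(B)\otimes H^*(F)$ as a graded $H^*(B)$-module; since $\pi_1(B)$ was assumed to act trivially on $H^*(F)$, this is exactly cohomological triviality of $E$.

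I expect the main (minor) obstacle to be purely bookkeeping: verifying carefully that the comparison map of spectral sequences is well-defined and compatible with the $H^*(B)$- resp.\ $H^*(B')$-module structures, and that collapse at $E_2$ together with the triviality of the $\pi_1$-action genuinely upgrades the associated-graded isomorphism to the graded-module isomorphism $H^*(E)\cong H^*(B)\otimes H^*(F)$ — there is a potential extension-problem subtlety, but it is resolved exactly as in the definition of "cohomologically trivial" we are using (the $H^*(B)$-module structure splits the filtration because $H^*(B)$ maps to $H^*(E)$, so one lifts a homogeneous basis of $H^*(F)$ and applies Leray--Hirsch). All coefficients here are in $\Z/2$ (or any field), so there are no additional torsion complications. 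No step requires anything beyond the standard naturality of the Serre spectral sequence and the Leray--Hirsch theorem.
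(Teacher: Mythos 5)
Your proof is correct and follows essentially the same route as the paper: the easy direction by Leray--Hirsch, and the converse by comparing Serre spectral sequences via the map which on $E_2$ is the injective homomorphism $f^*\otimes \id_{H^*(F)}$, concluding that collapse downstairs forces collapse upstairs. Your inductive argument (injectivity persists to each page once the differentials vanish) and your remark on resolving the module-structure/extension issue via Leray--Hirsch simply make explicit what the paper leaves implicit.
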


\begin{proof}
That the pull-back of a cohomologically trivial fibration is cohomologically trivial is an easy consequence of the Leray-Hirsch Theorem.  In the other direction, the injectivity of $f^*$ implies that $f$ induces a morphism of Serre spectral sequences which at the $E_2$-page is the injective map $$ f^* \otimes id_{H^*(F)}: H^*(B)\otimes H^*(F) \rightarrow H^*(B') \otimes H^*(F).$$  Thus if the spectral sequence for $f^*E$ collapses, then the spectral sequence for $E$ must as well.
\end{proof}

\begin{prop}\label{ealrcboeruc}
Let $LU_r^{\tau}$ be a real loop group of rank $r$ over $(S^1,- Id_{S^1})$. There is an isomorphism of $H^*(BU_r)$-modules $$H^*(BLU_r^{\tau}) \cong H^*(U_r) \otimes S(c_1,...,c_r)$$ with degrees $|c_k| = 2k$. The inclusion induced map satisfies $i^*(\bar{c}_k)=0$ and $i^*(c_k) = c_k$.
\end{prop}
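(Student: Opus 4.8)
The plan is to reduce the claim for $LU_r^\tau$ over $(S^1,-\mathrm{Id}_{S^1})$ to a statement about a classifying space of an ordinary gauge group, using the standard trick for antiholomorphic involutions without fixed points. Since $\sigma = -\mathrm{Id}_{S^1}$ acts freely, the quotient $S^1/\sigma$ is again a circle, and a real bundle $(E,\tau)$ over $(S^1,\sigma)$ descends to an ordinary $\C^r$-bundle $\bar E$ over $S^1/\sigma \cong S^1$; conversely the real gauge group $\G(E,\tau)$ is naturally isomorphic to the ordinary gauge group $\G(\bar E)$ of the quotient. (Equivalently, $BLU_r^\tau \cong Maps^{\Z/2}_E(S^1, BU_r)$ by Proposition \ref{lracldio}, and since $\Z/2$ acts freely on the source, equivariant maps $S^1 \to BU_r$ are the same as maps $S^1/\sigma \to (BU_r \times_{\Z/2} E\Z/2)$; but the conjugation action of $\Z/2$ on $BU_r$ extends to a free action on $BU_r$ as $EU_r \times_{\hat U_r} U_r$... ) — in any case, modulo identifying the right base space, one expects $BLU_r^\tau$ to be homotopy equivalent to a path component of $Maps(S^1, BU_r) = LBU_r$. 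By Remark \ref{alrocebucrao}, $LU_r^\tau$ lies in the identity component, so this path component is $L_0 BU_r = BLU_r$ itself. That would immediately give $H^*(BLU_r^\tau) \cong H^*(U_r)\otimes S(c_1,\dots,c_r)$ from Proposition \ref{aolercubaloxe}, with the inclusion $i$ identified with the identity up to homotopy, forcing $i^*(c_k) = c_k$ and $i^*(\bar c_k) = \bar c_k$.

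However, the last assertion to check is $i^*(\bar c_k) = 0$, which is \emph{not} what the naive identification above predicts — so the reduction cannot be quite that clean, and the actual argument must track how the involution interacts with the loop coordinate. I would instead argue as follows. The inclusion $i\colon BLU_r^\tau \to BLU_r$ fits, via Proposition \ref{aeoucgxalbrcu} and the functoriality of the operator $t$, into a comparison of the evaluation maps $ev\colon S^1 \times BLU_r^\tau \to BU_r$ and $S^1 \times BLU_r \to BU_r$. The class $\bar c_k = t([S^1]\otimes c_k)$ on $BLU_r$ pulls back to $t([S^1]\otimes c_k)$ computed with the \emph{real} evaluation map. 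Because $\tau$ interchanges the two ends of a fundamental domain for the free $\Z/2$-action on $S^1$ via complex conjugation on $BU_r$, and complex conjugation acts on $H^{2k}(BU_r;\Z/2)$ by $c_k \mapsto c_k$ but reverses orientation on the $S^1$-direction when one passes to the quotient, the slant product $t([S^1]\otimes c_k)$ over $BLU_r^\tau$ is computed against a loop that bounds in the relevant sense, giving $\bar c_k \mapsto 0$. Concretely: the composite $S^1 \times BLU_r^\tau \xrightarrow{ev} BU_r$ factors (up to homotopy) through $(S^1/\sigma) \times BLU_r^\tau$ followed by a map to $BU_r$, but the fundamental class $[S^1]$ maps to $2[S^1/\sigma] = 0$ in $\Z/2$-homology of the quotient, so integration over $[S^1]$ of any pulled-back class vanishes. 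This is exactly parallel to the computation $i^*(\bar c_k) = 2\bar w_k w_k = 0$ in the proof of Proposition \ref{robelurcbe}, and it simultaneously yields $i^*(c_k) = c_k$ since $t([pt]\otimes c_k)$ is unaffected.

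For the module structure, I would then invoke Lemma \ref{ebarecbiro}: take $B = BLU_r$, with fibration $F = U_r \to LBU_r \to BU_r$ (cohomologically trivial by Proposition \ref{aolercubaloxe}), and let $f\colon B' = BLU_r^\tau \to BLU_r$ be the inclusion $i$. Since $i^*(c_k) = c_k$, the map $i^*$ is surjective onto the subalgebra $S(c_1,\dots,c_r) \subset H^*(BLU_r)$, and one checks $i^*$ is injective on this polynomial subalgebra (e.g. because $H^*(BU_r) \to H^*(BLU_r^\tau)$ is a sub-thing of the fibration $U_r \to BLU_r^\tau \to BU_r$, whose base injects). Applying the Lemma to the pulled-back fibration $i^*(LBU_r) \to BLU_r^\tau$ — which is the evaluation fibration $U_r \to BLU_r^\tau \to BU_r$ — shows it is cohomologically trivial, i.e. $H^*(BLU_r^\tau) \cong H^*(U_r) \otimes S(c_1,\dots,c_r)$ as an $H^*(BU_r)$-module, with degrees $|c_k| = 2k$ as claimed.

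The main obstacle I anticipate is making the "factors through the quotient circle" argument fully rigorous at the level of the slant-product operator $t$ — i.e. pinning down the homotopy between $ev\colon S^1 \times BLU_r^\tau \to BU_r$ and a map that visibly kills $[S^1]$, and confirming that complex conjugation on $BU_r$ acts trivially on $H^*(BU_r;\Z/2)$ (true, since $c_k$ mod $2$ is conjugation-invariant) so that there is no sign subtlety obstructing the vanishing. Once that identification of $ev$ is in hand, both $i^*(\bar c_k)=0$ and $i^*(c_k)=c_k$ drop out, and the module structure is a formal consequence of Lemma \ref{ebarecbiro}.
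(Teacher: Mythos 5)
There are two genuine gaps. The first is in your computation of $i^*(\bar c_k)$: the evaluation map satisfies $ev(-z,\phi)=\overline{ev(z,\phi)}$, where the bar is the conjugation involution on $BU_r$, and this involution is \emph{not} homotopic to the identity (it acts by $-1$ on $H^2(BU_r;\Z)$), so $ev$ does not factor, even up to homotopy, through $(S^1/\sigma)\times BLU_r^{\tau}$ with target $BU_r$. Knowing that conjugation is trivial on $H^*(BU_r;\Z/2)$ does not repair this: it only says $ev^*c_k$ is invariant under the deck transformation, which is vacuous here (the antipodal map of $S^1$ is homotopic to the identity), and invariance under a free involution does not imply a class is pulled back from the quotient. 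A correct version of your idea descends $ev$ to the homotopy quotient $(S^1/\sigma)\times BLU_r^{\tau}\to (BU_r)_{h\Z/2}\simeq B(U_r\rtimes\Z/2)$ and checks that $c_k$ mod $2$ extends over that space (it does, as $w_{2k}$ of the realified universal bundle) before integrating against $[S^1]\mapsto 2[S^1/\sigma]=0$. The paper sidesteps all of this with the subgroup $H\subset LU_r^{\tau}$ of loops with $\gamma(1)\in O_r$ and the half-loop embedding $i_2(\gamma)(e^{i\theta})=\gamma(e^{i\theta/2})$ into $LU_r$: the comparison map is then literally induced by a degree-two self-map of $S^1$, Proposition \ref{aeoucgxalbrcu} gives $\bar c_k\mapsto 2\bar c_k=0$, and injectivity of $Bi_1^*$, $Bi_2^*$ transports this to $i^*$.

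The second, more serious, gap is the module structure. Your invocation of Lemma \ref{ebarecbiro} does not parse: the free loop fibration lives over $BU_r$, so it cannot be pulled back along $i:BLU_r^{\tau}\to BLU_r$; and the fibration you actually need, $B\Omega U_r\to BLU_r^{\tau}\to BU_r$, is not equivalent to $LBU_r\to BU_r$ (i.e.\ not its pullback along the identity): the latter has the constant-loop section, while a section of the former would be a continuous choice of path from $x$ to $\bar x$, i.e.\ a homotopy from $\mathrm{id}_{BU_r}$ to conjugation, which does not exist. The parenthetical claim that $H^*(BU_r)\to H^*(BLU_r^{\tau})$ is injective ``because the base injects'' is circular --- that injectivity is part of the cohomological triviality you are trying to establish. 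The missing idea, which is exactly the content of the paper's proof, is to pull both fibrations back to $BO_r\subset BU_r$, where they coincide: the subgroup $H$ gives a single fibration $B\Omega U_r\to BH\to BO_r$ that is the pullback of $BLU_r^{\tau}\to BU_r$ via $Bi_1$ and of $LBU_r\to BU_r$ via $Bi_2$; since $H^*(BU_r;\Z/2)\to H^*(BO_r;\Z/2)$, $c_k\mapsto w_k^2$, is injective, two applications of Lemma \ref{ebarecbiro} (one in each direction) transfer cohomological triviality from Proposition \ref{aolercubaloxe} to $BLU_r^{\tau}\to BU_r$, yielding the free $S(c_1,\dots,c_r)$-module structure. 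Without this (or an equivalent device) neither the module statement nor, as written, your identification of $i^*$ is proved.
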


\begin{proof}

Consider the fibration
\begin{equation}\label{alorecubl}
B \Omega U_r \rightarrow BLU_r^{\tau}  \rightarrow BU_r 
\end{equation} 
induced by evaluation at the basepoint $1 \in S^1$. Let $i_1: H \hookrightarrow LU_r^{\tau}$ be the subgroup sending the base point $1 \in S^1$ to $O_r$. Then evaluation at $1$ defines a fibration   
\begin{equation}\label{lorcebualcbo}
B\Omega U_r \rightarrow BH \stackrel{\pi}{\rightarrow} BO_r
\end{equation}
that is a pull-back of (\ref{alorecubl}) under the inclusion 
\begin{equation}\label{alrcedijng}
BO_r \rightarrow BU_r.
\end{equation}
On the other hand, because an element of $LU_r^{\tau} \subset Maps(S^1, U_r)$ is determined by its values on one half of $S^1$, and the elements of $H$ send $\pm 1 \in S^1$ to the same value in $O_r$, there is a second injection $i_2: H \hookrightarrow LU_r$ defined by $i_2(\gamma)(e^{i \theta}) = \gamma(e^{i \theta/2} )$ for $\theta \in [0,2\pi]$, producing (\ref{lorcebualcbo}) as a pull-back of (\ref{oleuablcroeb}) under base map (\ref{alrcedijng}).  Since (\ref{alrcedijng}) is a cohomology injection, and (\ref{oleuablcroeb})  is cohomologically trivial, the result follows from two applications of Lemma \ref{ebarecbiro}. Finally, we have a commutative diagram
$$ \xymatrix{ BLU_r^{\tau} \ar[r]^i & BLU_r\\
              BH \ar[r]^{Bi_2} \ar[u]^{Bi_1} &  BLU_r \ar[u]^{f}    }  $$
where $f$ is induced by a degree two map $ S^1 \rightarrow S^1$. By Proposition \ref{aeoucgxalbrcu}, $f^*(c_k) = c_k$ and $f^*(\bar{c}_k) = 2\bar{c}_k = 0$.  Since both $Bi_1^*$ and $Bi_2^*$ are injective, $i^*(c_k) = c_k$ and $i^*(\bar{c}_k)=0$. 
\end{proof}

\section{Real gauge groups over surfaces}\label{ealbleia}\label{sect6}

This entire section is devoted to proving

\begin{thm}\label{alcgjbrcd.}
Suppose $(\Sigma, \sigma)$ is a genus $g$ surface with real points $\Sigma^{\sigma} \cong \sqcup_{a} S^1$ and let $(E,\tau) \rightarrow (\Sigma, \sigma)$ be a real bundle of rank $r$ and degree $d$. Then the Poincar\'e series of the $B\G(r,d,\tau)$ satisfies
$$P_t(B\G(r,d,\tau)) = \frac{1-t^{2r}}{(1+t^r)^{a}} \prod_{k=1}^r \frac{(1+t^k)^{2a} (1+t^{2k-1})^{g+1-a}}{(1-t^{2k})^2} .$$
\end{thm}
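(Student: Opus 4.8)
The plan is to compute $P_t(B\G(r,d,\tau))$ by exhibiting $B\G(r,d,\tau)$ as the homotopy quotient of a suitable mapping space and then running an Eilenberg–Moore spectral sequence, exactly as in the complex case of Atiyah–Bott but now in the equivariant setting. By Proposition \ref{lracldio}, $B\G(r,d,\tau)$ is a path component of $Maps^{\Z/2}(\Sigma, BU_r)$, where $\Z/2$ acts on $BU_r$ by complex conjugation. First I would choose a $\Z/2$-CW structure on $(\Sigma,\sigma)$ adapted to the decomposition of $\Sigma$ into the fixed circles $\Sigma^\sigma \cong \sqcup_a S^1$ and the free part; a standard model realizes $\Sigma$, up to $\Z/2$-homotopy, as built from the $a$ real circles (each with trivial involution), plus $g+1-a$ "doubled" circles carrying the antipodal-type involution, glued along a wedge point — this is the real analogue of the Atiyah–Bott decomposition of a surface as a wedge of $2g$ circles capped by a $2$-cell. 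The key local inputs are then precisely the real loop group computations of Corollary \ref{orecubrobeui}: over a fixed circle $(S^1, Id)$ the relevant factor is $P_t(BLU_r^\tau) = \frac{1}{1+t^r}\prod_k \frac{(1+t^k)^2}{1-t^{2k}}$, and over an antipodal circle $(S^1,-Id)$ it is $\prod_k \frac{1+t^{2k-1}}{1-t^{2k}}$.

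Second, I would assemble these pieces. Restriction to the $1$-skeleton gives a fibration whose base is a product of real loop classifying spaces — $a$ copies of the $Id$-type and $g+1-a$ copies of the $-Id$-type — and whose fibre captures the attaching of the top cell; dually, collapsing the $1$-skeleton gives a cofibration $S^2 \to \Sigma/\Sigma^{(1)}$-type picture whose $\Z/2$-equivariant mapping space into $BU_r$ contributes a single factor $P_t(\Omega^2 BU_r)^{\pm}$. Concretely, the Eilenberg–Moore spectral sequence for the pullback square defining $Maps^{\Z/2}(\Sigma,BU_r)$ over $Maps^{\Z/2}(\Sigma^{(1)},BU_r)$ has $E_2 = \mathrm{Tor}_{H^*(BU_r^{\Z/2}\text{-ish factor})}(\cdots)$, and I expect it to collapse for formal reasons (freeness of the modules in Corollary \ref{orecubrobeui} and Proposition \ref{robelurcbe}, together with the surjectivity/injectivity statements on $i^*$ already recorded). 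The bookkeeping then yields
$$
P_t(B\G(r,d,\tau)) = \Big(P_t(\Omega^2 BU_r)\text{-factor}\Big)\cdot \Big(\tfrac{1}{1+t^r}\prod_k\tfrac{(1+t^k)^2}{1-t^{2k}}\Big)^{a}\cdot\Big(\prod_k\tfrac{1+t^{2k-1}}{1-t^{2k}}\Big)^{g+1-a},
$$
and the $\Omega^2$-factor supplies the overall $(1-t^{2r})\prod_k(1-t^{2k})^{-1}$ correction, after which one checks by direct manipulation that the exponents collapse to the claimed closed form $\frac{1-t^{2r}}{(1+t^r)^a}\prod_k\frac{(1+t^k)^{2a}(1+t^{2k-1})^{g+1-a}}{(1-t^{2k})^2}$.

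Third, a sanity check: specializing to $g=1$, $a=1$, $r=1$ should recover the torus answer for $Pic$, and the $a=0$ case should match what one gets for a curve without real points (where only $-Id$-type circles appear and the formula reduces to $(1-t^{2r})\prod_k(1+t^{2k-1})^{g+1}(1-t^{2k})^{-2}$); comparing with Liu–Schaffhauser \cite{liu2011yang} gives independent confirmation.

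The main obstacle I anticipate is \emph{not} the formal algebra but the equivariant mapping-space geometry: one must justify that $Maps^{\Z/2}(\Sigma,BU_r)$ genuinely decomposes, up to homotopy, according to a $\Z/2$-CW filtration in which the fixed circles and the free circles contribute independent tensor factors — i.e., that the relevant Mayer–Vietoris / Eilenberg–Moore setup for equivariant mapping spaces behaves as in the non-equivariant Atiyah–Bott argument. In particular one needs (i) that the fibrations obtained by restricting to sub-complexes are cohomologically trivial in the sense of Lemma \ref{ebarecbiro}, which should follow by pulling back along cohomology injections as in the proof of Proposition \ref{ealrcboeruc}, and (ii) that there is no interaction between the $w_k$-type classes coming from real circles (Proposition \ref{robelurcbe}) and the $c_k$-type classes coming from the antipodal circles when they are glued — this is where the detailed structure of $H^*$ of the real loop groups as \emph{free} modules, and the vanishing $i^*(\bar c_k)=0$, is doing the real work. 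I would devote most of the section to making this gluing rigorous, deferring the Eilenberg–Moore generalities to the appendix.
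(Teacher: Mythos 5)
Your overall strategy (reduce to the real loop group computations of Corollary \ref{orecubrobeui} and assemble over the surface with Eilenberg--Moore spectral sequences) is the same as the paper's, but two things go wrong. First, the bookkeeping in your displayed formula is off: each loop-group factor $P_t(BLU_r^{\tau})$ already contains a full copy of $P_t(BU_r)=\prod_k(1-t^{2k})^{-1}$, and when you form the equivariant mapping space of the whole $1$-skeleton these polynomial parts are \emph{identified} rather than multiplied (this is exactly what the Koszul/Tor computation does in Lemma \ref{oeracboel}, where the homology of $\wedge(x_{i',k})\otimes S(c_{i,k})$ is a single $S(c_k)$). As written, your product has denominator $(1-t^{2k})^{g+2}$ instead of $(1-t^{2k})^{2}$, so no ``direct manipulation'' recovers the stated answer; repairing this forces you to run essentially the paper's first spectral sequence. (A smaller point: if $a=0$ there is no fixed basepoint, so the equivariant wedge model needs to be replaced by something like the doubled-surface model $\Sigma\cong(\Sigma_h(\hat g,n)\times\{0,1\})/\!\sim$ actually used in \S\ref{sect6}.)

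Second, and this is the genuine gap: the collapse of the spectral sequence governing the top cell --- the $\Omega^2 BU_r$ factor in your language, the divided power column $\Gamma(z_2,\dots,z_r)$ of Lemma \ref{aloerublaroceb} in the paper's --- is \emph{not} formal, and the freeness statements plus the computation of $i^*$ do not give it. Note that over a fixed circle $i^*(c_k)=w_k^2$ (Proposition \ref{robelurcbe}) is far from surjective, so no Leray--Hirsch-type argument applies, and the even-total-degree generators $z_k$ could a priori support differentials into odd-degree classes of $H^*(B\tilde{\G})$. The paper's proof of collapse occupies \S\ref{collapse}: stabilize in rank (Lemma \ref{aoeublracoe} and Lemma \ref{aolercuba}), use the Hopf algebra structure and Milnor--Moore (Lemma \ref{rocrdlroced;j}) to reduce collapse to injectivity of $H^*(B\tilde{\G})\to H^*(B\G)$, prove the genus-zero stable case by running the recursion (\ref{aolrcubalocre}) \emph{backwards} from explicit moduli computations over $(\C P^1,\sigma_a)$ and $(\C P^1,\sigma_b)$ (Grothendieck splitting, $BO_r$ and $BSp$ stacks, and the $q$-series identities (\ref{darinking})--(\ref{oeulracbo})), and finally compare with a wedge of surfaces to get general genus. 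Your proposal offers no substitute for this step, and ``collapse for formal reasons'' is precisely the claim that fails; any complete write-up needs an argument of comparable substance here.
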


\subsection{Constructing the real gauge group}
We use models of real surfaces that are slightly different from \cite{biswas2009moduli}.  Let $\Sigma_h = \Sigma_h(\hat{g},n)$ denote a genus $\hat{g}$ surface with $n$ disks removed, and boundary circles numbered from $1$ to $n$: $$\partial \Sigma_h \cong \coprod_{i=1}^n S^1_i.$$ 
Observe that $ \Sigma_h(\hat{g},n)$ is homotopy equivalent to a wedge of $2 \hat{g} + n-1$ circles. 

Given an $n$-tuple of real loop groups $(LU_r^{\tau_1},...,LU_r^{\tau_n})$, define $\G(\hat{g},n, r; \tau_1,...,\tau_n)$ by the pull-back diagram of groups

\begin{equation}\label{alorecbulrcoeb}
\xymatrix{ \G(\hat{g},n, r; \tau_1,...,\tau_n) \ar[r] \ar[d] & Maps(\Sigma(\hat{g},n), U_r) \ar[d]^{\pi}\\
\prod_{i=1}^n LU_r^{\tau_i} \ar[r] &  \prod_{i=1}^n LU_r  }
\end{equation}
where $\pi$ is induced by restriction to the boundary circles.  For technical reasons, we prefer to work with the identity component subgroups $L_0U_r \subseteq LU_r$ and this poses no problem by Remark \ref{alrocebucrao}.  Let $Maps_0(\Sigma(\hat{g},n), U_r)$ denote the subgroup of maps that restrict to contractible loops on the boundary circles. Then we have a pull-back diagram of groups

\begin{equation}\label{aoeurclorce...}
\xymatrix{ \G(\hat{g},n, r; \tau_1,...,\tau_n) \ar[r] \ar[d] & Maps_0(\Sigma(\hat{g},n), U_r) \ar[d]^{\pi}\\
\prod_{i=1}^n LU_r^{\tau_i} \ar[r] &  \prod_{i=1}^n L_0U_r  }
\end{equation}
for which $\pi$ is surjective.

\begin{prop}
Let $(\Sigma, \sigma)$ be a real curve with $\sigma$ orientation reversing, and let $(E, \tau) \rightarrow (\Sigma,\sigma)$ be a real bundle of rank $r$.  Then the real gauge group $\G(E,\tau)$ is isomorphic to $ \G(\hat{g},n, r; \tau_1,...,\tau_n)$ for some choice of $\hat{g}$, $n$, and $\tau_i$. 
\end{prop}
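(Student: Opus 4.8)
The plan is to realize the real curve $(\Sigma,\sigma)$ by cutting along the real locus and then to match up the gauge group with the pull-back construction. Since $\sigma$ is orientation reversing and $\Sigma^\sigma \cong \sqcup_{i=1}^n S^1_i$, the quotient $\Sigma/\sigma$ is a compact surface $\Sigma_h$ with boundary, and the quotient map $q: \Sigma \to \Sigma_h$ is a branched double cover, ramified exactly over $\partial \Sigma_h = \sqcup_{i=1}^n S^1_i$, which is the image of the real locus. Concretely, $\Sigma$ is obtained from two copies of $\Sigma_h$ glued along their common boundary (or, when $\Sigma$ is non-orientable relative to the action in the appropriate sense, from $\Sigma_h$ with its boundary, but the point is that cutting $\Sigma$ open along $\Sigma^\sigma$ yields a fundamental domain $\Sigma_h$ for the $\Z/2$-action whose boundary is a double cover of $\Sigma^\sigma$ — in this orientation-reversing case a trivial double cover, i.e. $\partial\Sigma_h = \Sigma^\sigma$). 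First I would fix such a $\Sigma_h = \Sigma_h(\hat g, n)$ together with the identification $\Sigma = \Sigma_h \cup_{\partial} \sigma(\Sigma_h)$.

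Next I would unwind what the $\Z/2$-equivariant data amounts to. By Proposition \ref{lracldio}, $\G(E,\tau)$ is the real gauge group, identified as $Maps_{\hat U_r}(P,U_r)$, and an equivariant gauge transformation of $(E,\tau)$ over $\Sigma$ is precisely a gauge transformation over the fundamental domain $\Sigma_h$ whose restriction to $\partial\Sigma_h = \Sigma^\sigma$ is constrained to lie in the fixed-point locus of the conjugation action on $LU_r$ — that is, it lies in $LU_r^{\tau_i}$ on the $i$th boundary circle, where $\tau_i$ is the restriction of $\tau$ to the real bundle $E^\tau|_{S^1_i}$ over that circle. (On the interior of $\Sigma_h$ there is no constraint because $\sigma$ acts freely there, so an equivariant map is freely determined by its values on one sheet.) This gives exactly a commuting square as in (\ref{alorecbulrcoeb}): a gauge transformation $g$ over $\Sigma$ restricts, on one hand, to a map $\Sigma_h \to U_r$ (after trivializing $E$ over the contractible-up-to-wedge-of-circles domain $\Sigma_h$, which is possible since $LU_r^\tau \subseteq L_0U_r$ by Remark \ref{alrocebucrao}, so the relevant components are the ones through the identity), and on the other hand its boundary restrictions land in $\prod_i LU_r^{\tau_i} \subseteq \prod_i LU_r$. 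Conversely any such compatible pair glues to a well-defined equivariant gauge transformation over $\Sigma$. Hence $\G(E,\tau)$ is the fibre product, which is $\G(\hat g, n, r; \tau_1,\dots,\tau_n)$; one then notes via (\ref{aoeurclorce...}) that passing to the identity components $L_0U_r$ and $Maps_0$ changes nothing, again by Remark \ref{alrocebucrao}.

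Two points need care. First, one must check that the real structure $(E,\tau)$ restricted to the fundamental domain really does trivialize as a $U_r$-bundle (not just the underlying $\C^r$-bundle): this follows because $\Sigma_h$ deformation retracts onto a wedge of $2\hat g + n - 1$ circles and any $U_r$-bundle over a $1$-complex is trivial, and the relevant boundary loop classes are in the image of $\pi_0(Maps_0)$. Second — and this is the \textbf{main obstacle} — one must verify that the boundary real loop groups $\tau_i$ that arise are exactly the ones allowed (over $(S^1, Id_{S^1})$, classified by $w_1 \in \Z/2$ as in Proposition \ref{eloced}), and that every tuple $(\tau_1,\dots,\tau_n)$ consistent with the degree parity constraint of Theorem \ref{arecbu} actually occurs for a suitable choice of $\hat g$ and $n$; conversely, that $\hat g$ and $n$ are determined by $(\Sigma,\sigma)$ (indeed $n = $ number of components of $\Sigma^\sigma$ and $2\hat g = 2g + 1 - n$ or $2\hat g + n - 1 = g$ according to the standard topological classification of real curves). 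Keeping track of the identification of $\tau$ with the involution $\tau_i$ on each $S^1_i$ — in particular that $E^{\tau_i}$ over $S^1_i$ carries the Stiefel–Whitney class dictated by $w_1(E^\tau)$ — is the delicate bookkeeping, but it is routine once the gluing picture is set up. I would close by remarking that $\hat g$ here need not equal the genus $g$ of $\Sigma$; the relation $g = 2\hat g + n - 1$ (equivalently $\Sigma_h \simeq \vee_{g} S^1$ when $n\geq 1$) is what makes the later Betti-number computation in Theorem \ref{alcgjbrcd.} come out in terms of $g$ and $a = n$.
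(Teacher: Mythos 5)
There is a genuine gap in your geometric setup: you take the fundamental domain to be the quotient by $\sigma$ cut along the real locus, and you force $\partial\Sigma_h=\Sigma^\sigma$, i.e.\ $n=a$ and every boundary circle carrying the trivial involution $Id_{S^1}$. This is only correct in the dividing case, where $\Sigma\setminus\Sigma^\sigma$ is disconnected and $\Sigma$ really is the double of $\Sigma_h$ glued by the identity along its boundary. In the non-dividing case (and in particular when $\Sigma^\sigma=\emptyset$, which is allowed here, since the proposition only assumes $\sigma$ orientation reversing), cutting along $\Sigma^\sigma$ does not produce a fundamental domain at all, and your identification breaks down. The model the paper uses, taken from the classification of real curves in \cite{biswas2009moduli} \S 2, is $(\Sigma,\sigma)\cong(\Sigma_h(\hat g,n)\times\{0,1\})/\!\sim$ with the involution swapping the two sheets, where only the first $a$ boundary circles are glued by the identity (these become $\Sigma^\sigma$), while the remaining $n-a$ boundary circles are glued with a half-rotation $(\theta,0)\sim(\theta+\pi,1)$; on their images $\sigma$ acts freely by the antipodal map. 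Consequently the boundary constraints on a gauge transformation restricted to one sheet are not all of the $(S^1,Id_{S^1})$ type of Proposition \ref{eloced}: on the last $n-a$ circles they are the real loop groups over $(S^1,-Id_{S^1})$ of Proposition \ref{arclricdo}. Your proposal never produces these, and they are essential downstream — the factors $(1+t^{2k-1})^{g+1-a}$ in Theorem \ref{alcgjbrcd.} come exactly from those circles.

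A concrete way to see that $n=a$ cannot work: with $2\hat g+n-1=g$ and $n=a$ you would need $g\equiv a-1 \pmod 2$, which fails e.g.\ for a genus $1$ curve with one (non-dividing) real circle, where the correct model is $\hat g=0$, $n=2$, $a=1$; and for $a=0$ your construction gives nothing, whereas the paper's model has $n\geq 1$ antipodal-type boundary circles. The rest of your argument — restricting an equivariant gauge transformation to one sheet, trivializing the unitary bundle over $\Sigma_h(\hat g,n)$ because it retracts onto a wedge of circles, and recognizing $\G(E,\tau)$ as the fibre product defining $\G(\hat g,n,r;\tau_1,\dots,\tau_n)$ in (\ref{alorecbulrcoeb}) — is the same as the paper's and is fine once the correct equivariant model is in place; also note that the "main obstacle" you flag (realizing every admissible tuple $(\tau_1,\dots,\tau_n)$) is not needed for the statement, which only asks that $\G(E,\tau)$ be isomorphic to \emph{some} $\G(\hat g,n,r;\tau_1,\dots,\tau_n)$.
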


\begin{proof}

Suppose that $\Sigma$ has genus $g$ and the fixed point set $\Sigma^{\sigma}$ consists of 
$a\geq 0$ circles.  Then by the classification of real curves  (found in \cite{biswas2009moduli}, section 2),  
$(\Sigma, \sigma)$ is equivariantly homeomorphic to a quotient $(\Sigma_h(\hat{g}, n)  \times \{0,1\})/ \sim $ 
with involution $\sigma$ sending $(\theta,j)$ to $(\theta, j+1~mod~2)$. Here $2 \hat{g} +n-1 = g$ , and the quotient relation is defined on boundary circles by  $ (\theta, 0) \sim (\theta,1)$  if $ i \leq a$ and $(\theta ,0) \sim (\theta +\pi,1)$ if $i > a$,  where $a < n$ if $\Sigma \setminus \Sigma^{\sigma}$ is connected and  $a= n$ if not.

Finally, since the involution transposes the two copies of $\Sigma_h(\hat{g},n)$,  and the restriction of $E$ to one copy of $\Sigma_h(\hat{g},n) $ is trivial, we may identify $\G(E,\tau)$ with the subgroup of $Maps(\Sigma(\hat{g},n), U_r)$ satisfying the boundary conditions of lying in the appropriate real loop groups, determined by restricting $(E,\tau)$ to the boundary circles of $\Sigma_h(\hat{g},n) $. 
 \end{proof}

\subsection{The first spectral sequence}

In this section, we use the pull-back diagram (\ref{aoeurclorce...}) to compute the Betti numbers of $B\G^{\tau}$. It is convenient to first consider an auxiliary space. Denote by $X$ the surface $\Sigma(\hat{g},n)$ with an open disk removed and denote by $S \subseteq \partial X$ the newly introduced boundary circle. Note that $X$ is homeomorphic to $\Sigma(\hat{g},n+1)$, but the new boundary circle will play a different role than the others. Consider the pull-back diagram of topological groups

\begin{equation}\label{lrcedlrilde}
\xymatrix{ \tilde{\G}(\hat{g}, n, r; \tau_1,...,\tau_n) \ar[r] \ar[d] & Maps_0(X, U_r) \ar[d]^{\pi}\\
\prod_{i=1}^n LU_r^{\tau_i} \ar[r] &  \prod_{i=1}^n L_0U_r  }
\end{equation}
where $Maps_0(X, U_r)$ is the subgroup of $Maps(X, U_r)$ of maps sending all boundary circles to contractible loops in $U_r$. 

\begin{lem}\label{oeracboel}
Suppose that $\sigma_i = Id_{S^1}$ for the first $a$ boundary circles and the remaining $\sigma_i =-Id_{S^1}$. Then $B \tilde{\G}(\hat{g}, n, r; \tau_1,...,\tau_n)$ has $\Z/2$ Poincar\'e series
	$$  P_t(B \tilde{\G}(\hat{g}, n, r; \tau_1,...,\tau_n)) = \frac{1}{(1+t^r)^{a}} \prod_{k=1}^r \frac{(1+t^k)^{2a} (1+t^{2k-1})^{2 \hat{g} +n-a}}{1-t^{2k}}. $$
	\end{lem}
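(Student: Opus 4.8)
The plan is to exploit the pull-back diagram (\ref{lrcedlrilde}) by running the Eilenberg-Moore spectral sequence of classifying spaces. Applying $B$ to (\ref{lrcedlrilde}) produces a homotopy pull-back square of classifying spaces, and since the right-hand map $B\pi: BMaps_0(X,U_r) \to \prod_{i=1}^n BL_0U_r$ is a fibration (because $\pi$ is surjective in (\ref{aoeurclorce...})), the Eilenberg-Moore spectral sequence computes $H^*(B\tilde{\G})$ with $E_2$-page $\mathrm{Tor}_{H^*(\prod BL_0U_r)}\big(H^*(BMaps_0(X,U_r)),\, H^*(\prod BLU_r^{\tau_i})\big)$. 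All the ingredients are known: $BMaps_0(X,U_r) = Maps_0(X, BU_r)$ and since $X \simeq \Sigma(\hat g, n+1)$ is a wedge of $2\hat g + n$ circles with all of them being ``contractible-loop'' boundary circles, Lemma \ref{oaarceublrc} (with $p=m=2\hat g + n$) gives $H^*(BMaps_0(X,U_r)) \cong \wedge(\bar c_{i,k}: 1\le i \le 2\hat g+n,\ 2\le k\le r)\otimes S(c_1,\dots,c_r)$; the base $H^*(\prod_{i=1}^n BL_0U_r) = H^*(BLU_r)^{\otimes n} = (\wedge(\bar c_1,\dots,\bar c_r)\otimes S(c_1,\dots,c_r))^{\otimes n}$ by Lemma \ref{oaarceublrc}; and the fibre cohomology $H^*(\prod BLU_r^{\tau_i}) = \bigotimes_{i=1}^n H^*(BLU_r^{\tau_i})$ is given by Corollary \ref{orecubrobeui}, with $a$ factors of the $\sigma=Id$ type and $n-a$ of the $\sigma=-Id$ type.

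The key computational step is to identify the module structures. The restriction map $\pi$ sends the $i$-th boundary circle's classes: in $H^*(BMaps_0(X,U_r))$ the relevant generators $\bar c_{i,k}$ (for the boundary circle of index $i\le n$ inside $X$) pull back the generators $\bar c_k^{(i)}$ of the $i$-th factor $H^*(BLU_r)$, while the polynomial classes $c_k$ pull back diagonally. On the fibre side, Corollary \ref{orecubrobeui} tells us that $H^*(\prod BLU_r^{\tau_i})$ is a free $S(c_1,\dots,c_r)^{\otimes n}$-module on which every $\bar c_k^{(i)}$ acts as zero. So as a module over the base $\wedge(\bar c_k^{(i)})\otimes S(c_k^{(i)})$ — here I should carefully check the coalgebra/algebra bookkeeping — each exterior generator $\bar c_k^{(i)}$ of the base acts trivially on the fibre and the polynomial generators act through the diagonal. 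This means the Tor splits as a tensor product over $i$ of local contributions, and for each $i$ the computation reduces to $\mathrm{Tor}$ over $\wedge(\bar c_1,\dots,\bar c_r)\otimes S(c_1,\dots,c_r)$ (acting on the exterior algebra of the $X$-side circle $\bar c_{i,k}$ and on $H^*(BLU_r^{\tau_i})$), which by standard Koszul-type arguments contributes a factor computing a $\mathrm{Tor}$ over the exterior algebra $\wedge(\bar c_1,\dots,\bar c_r)$ (giving a divided-power/polynomial factor of Poincaré series $\prod_{k=1}^r 1/(1-t^{2k})$ roughly, shifted) tensored with the $c$-polynomial bookkeeping from the base. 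The precise matching of degrees — the $\bar c_k$ have degree $2k-1$, the $c_k$ degree $2k$, and $H^*(SO_r)$ or $H^*(U_r)$ must be read off — then assembles into the claimed product formula.

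I would assemble the Poincaré series factor by factor: the $S(c_1,\dots,c_r)$ polynomial part of $H^*(BMaps_0(X,U_r))$ surviving against the $n$-fold diagonal gives a net $\prod_{k=1}^r 1/(1-t^{2k})$ (one copy of the polynomial ring remains after using $n-1$ copies to resolve, or rather the Tor in the polynomial variables over the diagonal contributes appropriately — this is the $1/(1-t^{2k})$ in the denominator); the $2\hat g + n$ exterior generators $\bar c_{i,k}$ ($k\ge 2$) on the $X$-side, of which the $n$ boundary ones get ``absorbed'' via $\pi$ into resolving the base, leave the remaining $2\hat g$ internal ones plus whatever the Tor over $\wedge(\bar c_1,\dots,\bar c_r)$ produces, contributing factors of $(1+t^{2k-1})$ and $1/(1-t^{2k})$; and the fibre cohomologies contribute $\prod_i P_t(BLU_r^{\tau_i})$ with the $\bar c$-action being trivial so no cancellation there beyond what Tor dictates. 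Counting: $a$ boundary circles of $Id$-type each bring $\frac{1}{1+t^r}\prod_k \frac{(1+t^k)^2}{1-t^{2k}}$ and $n-a$ of $(-Id)$-type each bring $\prod_k \frac{1+t^{2k-1}}{1-t^{2k}}$, and the genus plus the polynomial-ring resolution contributes the remaining $(1+t^{2k-1})^{2\hat g}$ and the overall $\prod_k \frac{1}{1-t^{2k}}$; collecting exponents gives exactly $\frac{1}{(1+t^r)^a}\prod_{k=1}^r \frac{(1+t^k)^{2a}(1+t^{2k-1})^{2\hat g + n - a}}{1-t^{2k}}$.

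The main obstacle I expect is verifying that the Eilenberg-Moore spectral sequence collapses at $E_2$ with no extension problems for the Poincaré series count — i.e. that there are no higher differentials. I would argue this by a formality/freeness argument: because $H^*(BLU_r^{\tau_i})$ is a \emph{free} module over the polynomial part $S(c_1,\dots,c_r)$ on which the exterior classes act trivially (Corollary \ref{orecubrobeui}), and because $H^*(BMaps_0(X,U_r))$ is correspondingly free over the relevant sub-Hopf-algebra, the Tor in each exterior variable $\bar c_k$ is concentrated in a predictable bidegree pattern, and a comparison with a model case (e.g. pulling back along a wedge-of-circles retraction, or using the naturality of Proposition \ref{aeoucgxalbrcu} together with the injectivity statements of Lemma \ref{ebarecbiro}) forces collapse. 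Alternatively, one can deduce collapse from the fact — provable by the same retraction trick used in Lemma \ref{oaarceublrc} — that the fibration $B\Omega U_r^{\text{(pieces)}} \to B\tilde\G \to \prod BLU_r^{\tau_i}$ or the relevant surface fibration is cohomologically trivial, reducing everything to Leray--Hirsch. Getting the exterior-vs-polynomial bookkeeping exactly right so that the exponents $2a$, $2\hat g + n - a$ and the $(1-t^{2k})$ denominator come out precisely as stated is the delicate part of the write-up.
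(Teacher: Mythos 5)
Your overall route is the paper's route: apply $B$ to the square (\ref{lrcedlrilde}) and run the Eilenberg--Moore spectral sequence with inputs from Lemma \ref{oaarceublrc} and Corollary \ref{orecubrobeui}. But two of your computational claims are wrong, and they are exactly the points the proof turns on. First, $Maps_0(X,U_r)$ only constrains the \emph{boundary} circles: under the homotopy equivalence $X\simeq \vee^{2\hat g+n}S^1$ the $n$ circles $S^1_i$ are constrained (the extra circle $S$ is a product of commutators and the $S^1_i$, so its condition is redundant), while the $2\hat g$ genus circles are unconstrained. So Lemma \ref{oaarceublrc} applies with $m=2\hat g+n$ and $p=n$, not $p=m$; the genus circles contribute exterior classes in degrees $2k-1$ for \emph{all} $k\geq 1$, giving the factor $\prod_{k}(1+t^{2k-1})^{2\hat g}$ (in particular the $k=1$ term $(1+t)^{2\hat g}$), which your $p=m$ reading loses. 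Your final assembly quietly uses the correct count, so the write-up is internally inconsistent.

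Second, and more seriously, no divided-power factor arises in this Tor, and your expectation of one would wreck the argument. Because $\pi^*$ carries the exterior generators $\bar c^{(i)}_{k'}$ of $R=H^*(\prod BL_0U_r)$ onto free exterior generators of $H^*(BMaps_0(X,U_r))$, that module is \emph{free} over the exterior part of $R$; the only non-free direction is the diagonal $c_{i,k}\mapsto c_k$ on the polynomial part. A Koszul resolution on classes $x_{i',k}$ of bidegree $(-1,2k)$ with $\delta(x_{i',k})=c_{i',k}+c_{1,k}$ handles this, and after tensoring with $H^*(\prod BLU_r^{\tau_i})\cong V\otimes S(c_{i,k})$ (free over the polynomial part, exterior classes acting by zero) these elements form a regular sequence, so the Tor is concentrated in homological degree zero: $EM_2\cong V\otimes A\otimes S(c_1,\dots,c_r)$, entirely in the zeroth column. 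The $\prod_k 1/(1-t^{2k})$ in the answer is the surviving diagonal polynomial algebra $S(c_k)$, not a Tor over $\wedge(\bar c_1,\dots,\bar c_r)$; divided powers appear only when the exterior classes die on \emph{both} sides, which is the situation of the second spectral sequence (Lemma \ref{aloerublaroceb}), not this one. With the correct $EM_2$ there is no room for differentials, so the collapse you spend your last paragraph worrying about is automatic, and the formality/Leray--Hirsch arguments you sketch are neither needed nor (in the form stated) proofs; had your $EM_2$ really contained a divided-power factor, its Poincar\'e series would exceed the claimed answer and collapse would be false.
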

\begin{proof}

Applying the classifying space functor to (\ref{lrcedlrilde}) results in a pull-back diagram

\begin{equation}\label{dgaonpn2}
\xymatrix{ B\tilde{\G}(\hat{g}, n, r; \tau_1,...,\tau_n) \ar[r] \ar[d] & B Maps_0(X, U_r) \ar[d]^{\pi}\\
\prod_{i=1}^n BLU_r^{\tau_i} \ar[r] & \prod_{i=1}^n BL_0U_r  }
\end{equation}
We calculate the Betti numbers of $B\tilde{\G}(\hat{g}, n, r; \tau_1,...,\tau_n)$ using an Eilenberg-Moore spectral sequence (EMSS). We review the EMSS in Appendix \ref{aolrecubque}.  

Let $R := H^*(\prod_{i=1}^n BL_0U_r)$ ($\Z/2$ coefficients understood throughout). The EMSS associated to (\ref{dgaonpn2}) converges to $H^*(B\tilde{\G}(\hat{g}, n, r; \tau_1,...,\tau_n))$ and has second page equal to the bi-graded algebra

\begin{equation}\label{Oeularocebr}
 EM_2^{*,*} =  Tor_R^{*,*}( H^*(\prod_{i=1}^n BLU_r^{\tau_i}) ,  H^*(BMaps_0(X, U_r)) ).
\end{equation}

For the rest of this section we use index sets, $i \in \{1,...,n\}$, $i' \in \{2,...,n\}$ $k \in \{1,...,r \}$, and $k' \in \{2,...,r\}$.  We use the notational convention that the appearance of one of these subscripts means to include the full range of that index set. 

Applying Lemma \ref{oaarceublrc} and the Kunneth theorem

$$R := \bigotimes_{i=1}^nH^*( BL_0U_r) \cong \wedge( \bar{c}_{i,k'}) \otimes S(c_{i,k})$$
where $|\bar{c}_{i,k}|= 2k-1, |c_{i,k}| =2k$.

\begin{lem}\label{aolercubalcroe}
There is an isomorphism
$$H^*( B Maps_0(X, U_r)) \cong \wedge(\bar{c}_{i,k'}) \otimes S(c_{k}) \otimes A, $$
where $A$ is an exterior algebra with Poincar\'e series $$P_t(A) = \prod_{k=1}^r (1+t^{2k-1})^{2 \hat{g}}.$$ 
In these generators, the bundle map $\pi^*: R \rightarrow H^*(Maps_0(X, U_r))$ satisfies  $\pi^*(\bar{c}_{i,k'}) = \bar{c}_{i,k'}$, and $\pi^*(c_{i,k}) = c_k$.
\end{lem}

\begin{proof}
	
The surface $X$ is homotopy equivalent to a wedge of $2 \hat{g} +n$ circles and the this equivalence send the boundary components $S_i^1$ for $i=1,...,n$ to circles in the wedge product. The lemma now follows directly from Lemma \ref{oaarceublrc}.	
\end{proof}

Using the coordinates of Lemma \ref{aolercubalcroe}, the Koszul resolution of $R \rightarrow H^*(BMap_0(X,U_r))$ is the differential bigraded algebra $(K^{*,*},\delta)$, where
$$ K^{*,*} := \wedge(\bar{c}_{i,k'}, x_{i',k} ) \otimes S(c_{i,k}) \otimes A $$
with bidegrees and differentials

\begin{tabular}{|c|c|c|}
	\hline
generator   & bi-degree  & $\delta$-derivative \\
\hline
$\bar{c}_{i,k'}$ & $(0,2k'-1)$ & $0$ \\
$c_{i,k}$ & $(0,2k)$ & $0$\\
$x_{i',k}$ & $(-1,2k)$ & $c_{i',k}+c_{1,k}$ \\
\hline	
\end{tabular}

Note in particular that $K^{*,*}$is a free extension over $R$ and the cohomology $H(K^{*,*},\delta)$ is isomorphic to $H^*(BMap_0(X,U_r))$ as an $R$-module, where we understand elements in $H^d(BMap_0(X,U_r))$ to have bi-degree $(0,d)$. By (\ref{Oeularocebr}), $EM_2^{*,*}$ is isomorphic as a bi-graded algebra to the homology of the complex 
$$ ( K^{*,*} \otimes_R H^{*}(\prod_{i=1}^n BLU_r^{\tau_i}),~ \delta \otimes_R 1). $$
Applying Corollary \ref{orecubrobeui} and the Kunneth theorem, we have an isomorphism of $R$-modules
	$$ H^*(\prod_{i=1}^n BLU_r^{\tau_i}) \cong V \otimes S(c_{i,k})$$  	
where $V$ is a graded vector space with Poincar\'e series
$$P_t(V) = \frac{1}{(1+t^r)^{a}}\prod_{k=1}^{r}(1+t^k)^{2a}(1+t^{2k-1})^{n-a}, $$ and the $R$-module structure is defined by $R \rightarrow V \otimes S(c_{i,k})$, $c_{i,k} \mapsto c_{i,k}$ and $\bar{c}_{i,k'} \mapsto 0$.

Forming the tensor product gives
$$K^{*,*} \otimes_R H^{*}(\prod_{i=1}^n BLU_r^{\tau_i}) \cong V \otimes A \otimes \wedge( x_{i',k} ) \otimes S(c_{i,k}) $$
This complex factors into $V \otimes A$ with trivial differential and the Kozsul complex $\wedge( x_{i',k} ) \otimes S(c_{i,k})$ with differential $\delta(x_{i',k} ) = c_{i',k} + c_{1,k}$ whose homology is simply $S(c_k)$. Applying the Kunneth theorem for chain complexes gives
$$ EM_2 = V \otimes A  \otimes S(c_k). $$
This bigraded algebra is zero outside of the column $EM_2^{0,*}$, so it must collapse and we deduce $$P_t(B \tilde{\G}(\hat{g}, n, r; \tau_1,...,\tau_n)) = P_t(V) P_t(A) P_t(S(c_k))$$ completing the proof.
\end{proof}

\begin{remark}\label{etrgfffre}
In the proof of Lemma \ref{oeracboel}, we showed that $EM_{\infty}$ is supported in the zeroth column. It follows from  Lemma \ref{ezero} that the induced map $H^*(\prod_{i=1}^n BLU_r^{\tau_i}) \otimes H^*(BMaps_0(X, U_r)) \rightarrow H^*(B \tilde{\G}(\hat{g}, n, r; \tau_1,...,\tau_n))$ is injective. \end{remark}

\subsection{The second spectral sequence}\label{secondspec}

The group $\G(\hat{g},n, r; \tau_1,...,\tau_n)$ may be identified with the subgroup of $\tilde{\G}(\hat{g}, n, r; \tau_1,...,\tau_n) \subset Maps_0(X, U_r)$ consisting of those elements that take constant value on the remaining boundary circle $S\subseteq \partial X$. This determines a pull-back diagram of topological groups,

\begin{equation*}
\xymatrix{ \G(\hat{g},n, r; \tau_1,...,\tau_n) \ar[r] \ar[d] & \tilde{\G}(\hat{g}, n, r; \tau_1,...,\tau_n) \ar[d]^{\pi}\\
U_r \ar[r] &  L_0U_r }
\end{equation*}
where $\pi$ is restriction to the boundary circle $S$. Applying the classifying space functor produces a fibre bundle pull-back
\begin{equation}\label{arcoelrb}
\xymatrix{ B\G(\hat{g},n, r; \tau_1,...,\tau_n) \ar[r] \ar[d] & B\tilde{\G}(\hat{g}, n, r; \tau_1,...,\tau_n) \ar[d]^{\pi}\\
BU_r \ar[r] &  BL_0U_r }
\end{equation}

\begin{lem}\label{aloerublaroceb}
The Eilenberg-Moore spectral sequence of the diagram (\ref{arcoelrb}) has second page the bigraded algebra
$$ EM_2^{*,*} \cong  \Gamma(z_2,...,z_r) \otimes H^*(B\tilde{\G}(\hat{g}, n, r; \tau_1,...,\tau_n)) $$
where $z_{k'}$ has bi-degree (-1,2k'-1), $\Gamma(z_2,...,z_r)$ denotes the divide power algebra on generators $z_2,...,z_r$ and $H^d(B\tilde{\G}(\hat{g}, n, r; \tau_1,...,\tau_n))$ is given bidegree $(0,d)$ (i.e. lies in the 0th column).
\end{lem}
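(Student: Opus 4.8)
The plan is to feed the pull-back diagram (\ref{arcoelrb}) into the Eilenberg--Moore spectral sequence machinery and identify the $E_2$-page as a $\mathrm{Tor}$ over $R' := H^*(BL_0U_r)$. By the EMSS for a pull-back of fibrations, we have
$$ EM_2^{*,*} \cong \mathrm{Tor}_{R'}^{*,*}\bigl( H^*(BU_r),\ H^*(B\tilde{\G}(\hat{g}, n, r; \tau_1,...,\tau_n)) \bigr). $$
First I would record the relevant cohomology rings and maps. By Lemma \ref{oaarceublrc} (applied to $S^1$, with $p=0$), $R' = H^*(BL_0U_r) \cong \wedge(\bar{c}_2,\dots,\bar{c}_r) \otimes S(c_1,\dots,c_r)$, with $|\bar c_{k'}| = 2k'-1$, $|c_k|=2k$. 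The right module is $H^*(BU_r) = S(c_1,\dots,c_r)$, and the restriction map $BU_r \to BL_0U_r$ (inclusion of constant loops) is, by Proposition \ref{aeoucgxalbrcu} / the functoriality of $t$, the quotient sending $\bar c_{k'} \mapsto 0$ and $c_k \mapsto c_k$. The left module is $H^*(B\tilde{\G})$, and the map $\pi^* : R' \to H^*(B\tilde{\G})$ coming from restriction to the circle $S$ sends $c_k \mapsto c_k$ (the polynomial generators survive, as in Lemma \ref{aolercubalcroe}) and $\bar c_{k'} \mapsto 0$ (the degree-one-loop classes die, exactly as $i^*(\bar c_k)=0$ in Propositions \ref{robelurcbe} and \ref{ealrcboeruc}).

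Next I would compute the $\mathrm{Tor}$ by resolving $H^*(BU_r)$ over $R'$. Since $R' = \wedge(\bar c_{k'}) \otimes S(c_k)$ and $H^*(BU_r) = S(c_k) = R'/(\bar c_{k'})$, the quotient is realized by the Koszul complex: tensor $R'$ with the exterior algebra $\wedge(z_2,\dots,z_r)$, $|z_{k'}| = (-1, 2k'-1)$, with differential $\delta(z_{k'}) = \bar c_{k'}$. This is a free $R'$-resolution of $H^*(BU_r)$ because $\bar c_2,\dots,\bar c_r$ is a regular sequence (the $S(c_k)$-factor is free, and the $\bar c_{k'}$ generate an exterior algebra on it). Tensoring this resolution over $R'$ with $H^*(B\tilde\G)$ and using $\pi^*(\bar c_{k'}) = 0$, the induced differential $\delta \otimes_{R'} 1$ vanishes identically. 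Hence
$$ EM_2^{*,*} \cong \wedge(z_2,\dots,z_r) \otimes_{\Z/2} H^*(B\tilde\G(\hat g, n, r; \tau_1,\dots,\tau_n)), $$
with $z_{k'}$ in bidegree $(-1, 2k'-1)$ and $H^d(B\tilde\G)$ placed in bidegree $(0,d)$. Over $\Z/2$ the exterior algebra $\wedge(z_{k'})$ and the divided power algebra $\Gamma(z_{k'})$ coincide as graded vector spaces (and indeed the correct multiplicative structure on $\mathrm{Tor}$ in this non-free-commutative situation is the divided power one), so we may write $EM_2^{*,*} \cong \Gamma(z_2,\dots,z_r) \otimes H^*(B\tilde\G)$ as claimed.

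The one point needing care is the identification of $\pi^* : R' \to H^*(B\tilde\G)$ on the generators $\bar c_{k'}$. The map $B\tilde\G \to BL_0U_r$ is restriction to the extra boundary circle $S \subseteq \partial X$; I would argue that $S$ is null-homotopic in $X$ (it bounds the removed disk, so in $X = \Sigma(\hat g, n+1)$ it is the last boundary circle, which bounds in the surface obtained by regluing — more precisely, in the wedge-of-circles model of $X$ the circle $S$ maps to a contractible loop), whence the composite $BMaps_0(X,U_r) \to BL_0U_r$ kills the classes $\bar c_{k'} = t([S^1]\otimes c_{k'})$ by the naturality of the slant-product operator $t$ in Proposition \ref{aeoucgxalbrcu}. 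Pulling back along $B\tilde\G \to BMaps_0(X,U_r)$ then gives $\pi^*(\bar c_{k'}) = 0$ on $B\tilde\G$, while $\pi^*(c_k) = c_k$ since the polynomial generators are detected at a point. This is the main obstacle — everything else is the standard Koszul computation — but it follows cleanly from the functoriality already established, together with the observation from Remark \ref{etrgfffre} that $H^*(B\tilde\G)$ contains the relevant classes faithfully.
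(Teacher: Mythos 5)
Your overall route is the same as the paper's: identify $EM_2$ as $\mathrm{Tor}_{H^*(BL_0U_r)}\bigl(H^*(BU_r),H^*(B\tilde{\G})\bigr)$, resolve $H^*(BU_r)$ over $R'=\wedge(\bar c_{2},\dots,\bar c_{r})\otimes S(c_1,\dots,c_r)$ by a Koszul-type resolution, and use $\pi^*(\bar c_{k'})=0$ to make the differential vanish after tensoring. But the homological algebra at the heart of your computation fails. The classes $\bar c_2,\dots,\bar c_r$ are exterior generators, so $\bar c_{k'}^2=0$; they are zero divisors in $R'$ and emphatically not a regular sequence, and the finite complex $\wedge(z_2,\dots,z_r)\otimes R'$ with $\delta(z_{k'})=\bar c_{k'}$ is not acyclic: already for one generator, $0\to R'z_{2}\xrightarrow{\ \delta\ } R'\to R'/(\bar c_2)\to 0$ is not exact on the left, since $\delta(\bar c_2 z_2)=\bar c_2^{\,2}=0$. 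Killing an exterior generator requires the infinite divided-power resolution $\Gamma(z_{k'})\otimes R'$ with $\delta(z_{k'}^{[m]})=\bar c_{k'}z_{k'}^{[m-1]}$, which is exactly the resolution the paper writes down and is the reason a divided power algebra appears in the statement. Taken literally, your resolution would give $EM_2\cong\wedge(z_2,\dots,z_r)\otimes H^*(B\tilde{\G})$, a strictly smaller answer in which $\mathrm{Tor}^{-p}$ vanishes for $p>r-1$, whereas in fact every column is nonzero. The sentence you use to bridge the gap --- that over $\Z/2$ the exterior algebra and the divided power algebra ``coincide as graded vector spaces'' --- is false: $\wedge(z_{k'})$ is two-dimensional in each generator, while $\Gamma(z_{k'})$ has basis $\{z_{k'}^{[m]}\}_{m\geq 0}$ (over $\Z/2$ it is a tensor product of exterior algebras on the classes $z_{k'}^{[2^i]}$, not an exterior algebra on $z_{k'}$). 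So the final formula is asserted rather than derived.

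A second, smaller problem is your justification of $\pi^*(\bar c_{k'})=0$. The extra boundary circle $S$ is not null-homotopic, nor even null-homologous, in $X$: in $H_1(X;\Z/2)$ one has $[S]=\sum_{i=1}^n[S^1_i]$, the sum of the other boundary classes. The correct argument is that, by Proposition \ref{aeoucgxalbrcu}, restriction to $S$ sends $\bar c_{k'}$ to $\sum_{i=1}^n\bar c_{i,k'}$ in $H^*(BMaps_0(X,U_r))$, and each $\bar c_{i,k'}$ dies after further restriction to $B\tilde{\G}$ because that restriction factors through $H^*(BLU_r^{\tau_i})$, where the odd classes vanish by Corollary \ref{orecubrobeui}; equivalently, this vanishing can be read off from the explicit description $H^*(B\tilde{\G})\cong V\otimes A\otimes S(c_k)$ obtained in Lemma \ref{oeracboel}, in which $\bar c_{i,k'}\otimes 1=1\otimes\bar c_{i,k'}\cdot 1=0$. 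Your identification $\pi^*(c_k)=c_k$, and the rest of the setup of the $\mathrm{Tor}$ term, are fine.
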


\begin{proof}
By Lemma \ref{oaarceublrc} we have isomorphisms $H^*(BL_0U_r) \cong \wedge(\bar{c}_2,...\bar{c}_r) \otimes S(c_1,...,c_r)$ and $H^*(BU_r) \cong S(c_1,...,c_r)$.  The morphism $H^*(BL_0U_r) \rightarrow H^*(BU_r)$ sends $c_k$ to $c_k$ and $\bar{c}_{k'}$ to $0$.  The associated Koszul resolution $(K^{*,*}, \delta)$ is
$$ K^{*,*} \cong \Gamma(z_2,...,z_r) \otimes \wedge(\bar{c}_2,...,\bar{c}_r) \otimes S(c_1,...,c_r) = \Gamma(z_2,...,z_r) \otimes H^*(BL_0U_r)$$
with generators satisfying

\begin{tabular}{|c|c|c|}
	\hline
generator   & bi-degree  & $\delta$-derivative \\
\hline
$\bar{c}_{k'}$ & $(0,2k'-1)$ & $0$ \\
$c_{k}$ & $(0,2k)$ & $0$\\
$z_{k'}$ & $(-1,2k-1)$ & $\bar{c}_{k'}$ \\
\hline	
\end{tabular}

The morphism $\pi^*: H^*(BL_0U_r) \rightarrow H^*(B\tilde{\G}(\hat{g}, n, r; \tau_1,...,\tau_n))$ sends $\bar{c}_{k'}$ to $0$ for all $k' = 2,...,r$, so the tensor product complex $ K^{*,*} \otimes_{H^*(BL_0U_r)} H^*(B\tilde{\G}(\hat{g}, n, r; \tau_1,...,\tau_n)) $ has trivial boundary operator.  We conclude that
$$ EM_2^{*,*} = K^{*,*} \otimes_{H^*(BL_0U_r)} H^*(B\tilde{\G}(\hat{g}, n, r; \tau_1,...,\tau_n))  = \Gamma(z_2,...,z_r) \otimes H^*(B\tilde{\G}(\hat{g}, n, r; \tau_1,...,\tau_n)). $$
\end{proof}

To complete the proof of Theorem \ref{alcgjbrcd.}, it remains to prove that the spectral sequence of Lemma \ref{aloerublaroceb} collapses at $EM_2$. We turn to this tricky problem in \S  \ref{collapse}.

\subsection{Collapsing the spectral sequence}\label{collapse}

The first idea is to stabilize with respect to rank. The \emph{trivial real line bundle} over a real space $(M,\sigma)$ is the line bundle $M\times \C$ with involution $\tau_{triv}(m,z) = (\sigma(m), \bar{z})$.

\begin{lem}\label{aoeublracoe}
The morphism of pull-back diagrams (\ref{arcoelrb}) induced by forming a direct sum with the trivial real line bundle $$B\G(\hat{g},n, r; \tau_1,...,\tau_n) \rightarrow B\G(\hat{g},n, r+1; \tau_1\oplus \tau_{triv},...,\tau_n\oplus \tau_{triv})$$ 
determines a surjection on $EM_2$.
\end{lem}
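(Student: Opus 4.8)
The plan is to compare the two Eilenberg--Moore spectral sequences of Lemma \ref{aloerublaroceb} for rank $r$ and rank $r+1$ via the stabilization map, and to trace the map all the way back to explicit generators. First I would observe that forming a direct sum with the trivial real line bundle $(\C,\tau_{triv})$ induces a morphism of the pull-back diagrams
$$\xymatrix{ B\G(\hat{g},n,r;\tau_\bullet) \ar[r] \ar[d] & B\tilde{\G}(\hat{g},n,r;\tau_\bullet) \ar[d] & \ar[l] BU_r \ar[d] \\
 B\G(\hat{g},n,r+1;\tau_\bullet\oplus\tau_{triv}) \ar[r] & B\tilde{\G}(\hat{g},n,r+1;\tau_\bullet\oplus\tau_{triv}) & \ar[l] BU_{r+1} }$$
Since the EMSS is natural with respect to such morphisms of pull-back squares, it suffices to check that the induced map on the $E_2$-pages is surjective. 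By Lemma \ref{aloerublaroceb}, $EM_2^{*,*} \cong \Gamma(z_2,\ldots,z_r)\otimes H^*(B\tilde{\G}(\hat{g},n,r;\tau_\bullet))$, so surjectivity on $EM_2$ splits into two statements: (i) the divided-power generators $z_{k'}$ ($k'=2,\ldots,r$) for rank $r$ are hit by those for rank $r+1$, and (ii) the map $H^*(B\tilde{\G}(\hat{g},n,r+1;\tau_\bullet\oplus\tau_{triv})) \to H^*(B\tilde{\G}(\hat{g},n,r;\tau_\bullet))$ induced by stabilization is surjective.

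For (i), I would recall that the $z_{k'}$ arise from the Koszul resolution of $H^*(BL_0U_r)\to H^*(BU_r)$, killing the exterior generators $\bar{c}_{k'}$ of degree $2k'-1$. The stabilization map $U_r\to U_{r+1}$ induces $H^*(BL_0U_{r+1})\to H^*(BL_0U_r)$ sending $\bar{c}_{k'}\mapsto\bar{c}_{k'}$ for $k'\le r$ and $\bar{c}_{r+1}\mapsto 0$ (and similarly for the $c_k$), by the functoriality of Lemma \ref{oaarceublrc} together with Proposition \ref{aeoucgxalbrcu}. Hence the induced map of Koszul complexes sends $z_{k'}\mapsto z_{k'}$ for $k'\le r$ and $z_{r+1}\mapsto 0$, so on the level of divided power algebras $\Gamma(z_2,\ldots,z_{r+1})\to\Gamma(z_2,\ldots,z_r)$ is the obvious (split) surjection. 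This part is essentially bookkeeping.

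Statement (ii) is the main obstacle, and I expect it to require the explicit description of $H^*(B\tilde{\G})$ obtained in the proof of Lemma \ref{oeracboel}. There we found $H^*(B\tilde{\G}(\hat{g},n,r;\tau_\bullet)) \cong V\otimes A\otimes S(c_1,\ldots,c_r)$ with $V$ coming from $H^*(\prod_i BLU_r^{\tau_i})$ (via Corollary \ref{orecubrobeui} and Propositions \ref{robelurcbe}, \ref{ealrcboeruc}) and $A$ an exterior algebra on classes of degree $2k-1$ indexed by the $2\hat{g}$ ``internal'' circles. The plan is to check that under the stabilization map each of these tensor factors maps onto its rank-$r$ counterpart: the polynomial generators $c_k$ for $k\le r$ pull back from rank $r+1$ (with $c_{r+1}\mapsto 0$); the internal exterior generators of $A$ likewise stabilize (these are the images of Atiyah--Bott classes $\bar{c}_{i,k}$, functorial by Proposition \ref{aeoucgxalbrcu}, with the new $k=r+1$ generators mapping to $0$); and the factor $V$ maps onto the rank-$r$ version since, by Corollary \ref{orecubrobeui}, $H^*(BLU_r^{\tau})$ is generated over $S(c_1,\ldots,c_r)$ by the Stiefel--Whitney classes $w_k$ (in the $\sigma=Id$ case) or the $U_r$-classes (in the $\sigma=-Id$ case), all of which stabilize by the functoriality of Propositions \ref{robelurcbe} and \ref{ealrcboeruc}. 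Since $EM_2$ is the homology of a Koszul-type complex built from these data (and collapses in the zeroth column by Remark \ref{etrgfffre}), surjectivity of the chain-level map on each factor gives surjectivity on $EM_2$. The delicate point to watch is that $V$ is only a free $S(c_1,\ldots,c_r)$-module, not a subalgebra, so I would be careful to phrase the argument module-theoretically: it suffices that a set of $S(c_\bullet)$-module generators of $H^*(BLU_r^{\tau_i})$ lift, which is exactly what Corollary \ref{orecubrobeui} provides and what the naturality statements of the previous subsection guarantee.
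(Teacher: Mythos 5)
Your proposal is correct and is essentially the argument the paper has in mind: its proof of this lemma is just the remark that the claim is "a routine check using functoriality of the EMSS and Lemma \ref{aloerublaroceb}," and your write-up carries out exactly that check (naturality of $EM_2$ for the morphism of pull-back squares, $z_{k'}\mapsto z_{k'}$ with $z_{r+1}\mapsto 0$ on the Koszul/divided-power factor, and surjectivity of $H^*(B\tilde{\G}(r+1))\to H^*(B\tilde{\G}(r))$ via the generators supplied by Lemma \ref{oeracboel}, Corollary \ref{orecubrobeui} and Remark \ref{etrgfffre}). No substantive difference from the paper's route.
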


\begin{proof}
This is a routine check using functoriality of diagrams (\ref{alorecudcr}) and Lemma \ref{aloerublaroceb}. 
\end{proof}

An immediate consequence of Lemma \ref{aoeublracoe} is that the EMSS for $B\G(\hat{g},n, r; \tau_1,...,\tau_n) $ collapses if the EMSS of  $B\G(\hat{g},n, r+1; \tau_1\oplus \tau_{triv},...,\tau_n\oplus \tau_{triv})$ collapses.  In particular, we may focus on direct limit 
$$B\G(\hat{g},n; \tau_1,...,\tau_n) := \lim_{s\rightarrow \infty} B\G(\hat{g},n, r+s; \tau_1 \oplus \tau_{triv}^s,...,\tau_n \oplus \tau_{triv}^{s}).$$

By working in the stable limit, we gain the following simplification.

\begin{lem}\label{aolercuba}
The homotopy type of $B\G(\hat{g},n; \tau_1,...,\tau_n)$ is independent of the degree and Stieffel-Whitney numbers of the associated real vector bundle. 
\end{lem}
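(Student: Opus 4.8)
The plan is to show that the homotopy type of $B\G(\hat{g},n; \tau_1,\ldots,\tau_n)$ is unchanged under the two independent elementary moves that alter the degree and Stieffel-Whitney data: twisting a real line bundle on a boundary circle $S^1_i$ with $\sigma_i = Id$ by the nontrivial real line bundle (which flips $w_1(E^\tau)$ on that circle and changes the degree by $1$), and tensoring everything by a fixed real line bundle on $\Sigma$ (which changes the total degree without touching the Stieffel-Whitney numbers). By Theorem \ref{arecbu}, real bundles are classified by rank, degree, and the classes $w_1(E^{\tau})$ on each component circle subject to the parity constraint, so these moves suffice to connect any two admissible invariant packages of the same rank; passing to the stable limit removes any remaining rank constraint.

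First I would record that tensoring by a topological real line bundle $(L,\tau_L) \to (\Sigma,\sigma)$ induces an isomorphism of gauge groups $\G(E,\tau) \cong \G(E\otimes L, \tau\otimes\tau_L)$, simply because conjugation by a (fixed, invertible) section-level operator is a group isomorphism — indeed $\G(E,\tau)$ depends only on the underlying principal $\hat U_r$-bundle, and tensoring by a line bundle does not change the adjoint bundle. This already handles all degree shifts by multiples of the degrees realized by real line bundles on $(\Sigma,\sigma)$, and in the stable limit (where we may also add $\tau_{triv}$ summands freely) this is enough flexibility in the degree. Second I would handle the Stieffel-Whitney numbers on the circles with $\sigma_i = Id$: over such a circle the relevant real loop group $LU_r^{\tau_i}$ has the same homotopy type regardless of $w_1$ by Proposition \ref{eloced} together with Corollary \ref{orecubrobeui} (which explicitly states $P_t(BLU_r^{\tau})$ is independent of $\tau$ for $\sigma = Id_{S^1}$), and more is true: there is a homotopy equivalence $LU_r^{\tau_i} \simeq LU_r^{\tau_i'}$ for the two values of $w_1$, because both are identified with a path component of $Maps(S^1, BO_r) = LBO_r$ and the two components are swapped by an ambient self-homotopy-equivalence of $LBO_r$ (translation by a loop representing the generator of $\pi_1(BO_r)$, using the $H$-space structure on $BO$ in the stable limit). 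Feeding this equivalence into the pull-back diagram (\ref{alorecbulrcoeb}) defining $\G(\hat g,n,r;\tau_1,\ldots,\tau_n)$ yields a homotopy equivalence of the pull-backs, hence of the classifying spaces $B\G$.

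The third step is bookkeeping: verify that the two families of moves — the tensor-by-line-bundle move and the per-circle $w_1$-flip — together act transitively on the set of admissible $(d, \{w_1\}_i)$ data for fixed rank in the stable limit, using the parity relation $d \equiv \sum_i w_1(E^\tau)(S^1_i) \bmod 2$ from Theorem \ref{arecbu}. Each $w_1$-flip changes $d$ by $1$ and one $w_1$ by $1$, preserving the relation; any two configurations with the same parities differ by a sequence of such flips followed by a degree adjustment realizable by line-bundle tensoring in the stable range.

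The main obstacle I anticipate is the $w_1$-flip equivalence being compatible with the boundary restriction map $\pi$ in the pull-back square (\ref{alorecbulrcoeb}): one needs the self-equivalence of $LBO_r$ swapping the two path components to be realized (up to homotopy) by a self-equivalence of $Maps_0(\Sigma(\hat g,n),U_r)$ or at least to fit into a commuting ladder of fibrations, so that the induced map on pull-backs is an equivalence. The cleanest route is probably to work entirely in the stable limit $r\to\infty$ from the outset, where $BU$, $BO$, and $BL_0U_r$ all become infinite loop spaces and all the maps in sight are maps of (grouplike) $H$-spaces; then translating by an appropriate element of each mapping space simultaneously shifts all the relevant path components, and the equivalence of pull-backs follows from the fact that translation is a homotopy equivalence and the pull-back of a homotopy equivalence of fibrations (over a fixed base, or compatibly over equivalent bases) is a homotopy equivalence. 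This reduces the whole lemma to the statement that the relevant mapping spaces are grouplike $H$-spaces whose path components are permuted transitively by translation, which is exactly the content of Remark \ref{alrocebucrao} and the Bott-periodicity input already used in Proposition \ref{aolercubaloxe}.
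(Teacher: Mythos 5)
Your proposal is correct, and the ``cleanest route'' you settle on in the final paragraph is precisely the paper's proof: stabilize to $BU$, identify $B\G(\hat{g},n;\tau_1,\ldots,\tau_n)$ with a path component of the homotopy pull-back $H$ of grouplike H-spaces (via Proposition \ref{lracldio}), and use that $\pi_0(H)\cong(\Z/2)^a$ is a group so translation makes all components pairwise homotopy equivalent. The preliminary finite-rank moves (line-bundle tensoring and per-circle $w_1$-flips) are harmless but unnecessary once you pass to the stable H-space argument, and you correctly flag and repair the only delicate point, namely compatibility of the component-swapping translation with the boundary restriction map.
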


\begin{proof}

First recall that $BU$ is an H-space under the map $m: BU \times BU \rightarrow BU$ defined as the direct limit of the maps $BU_r \times BU_r \stackrel{\oplus}{\rightarrow} BU_{2r}$. The multiplication map $m$ clearly commutes with complex conjugation action on $BU$ so for any $\Z/2$-space $Y$ the space of equivariant maps of the form $Maps_{\Z_2}(Y, BU)$ becomes an H-space by point-wise multiplication.

Applying the classifying space functor to stable version of diagram (\ref{alorecbulrcoeb}), we obtain

\begin{equation}
\xymatrix{ B\G(\hat{g},n; \tau_1,...,\tau_n) \ar[r] \ar[d] & BMaps(\Sigma(\hat{g},n), U) \ar[d]^{\pi}\\
\prod_{i=1}^n BLU^{\tau_i} \ar[r] &  \prod_{i=1}^n BLU  }
\end{equation}

Applying Proposition \ref{lracldio} we find that $B\G(\hat{g},n, r; \tau_1,...,\tau_n)$ is identified with a path component of the space $H$ defined by the homotopy pull-back diagram of H-spaces

\begin{equation}
\xymatrix{ H \ar[r] \ar[d] & Maps(\Sigma(\hat{g},n), BU) \ar[d]^{\pi}\\
\prod_{i=1}^n (LBU)^{\sigma_i} \ar[r] &  \prod_{i=1}^n LBU  }
\end{equation}
where $(LBU)^{\sigma_i} = Maps^{\Z/2}( (S^1,\sigma_i), (BU, \bar{\cdot}))$. Because $H$ is an H-space for which $\pi_0(H) \cong \pi_0(\prod_{i=1}^n (LBU)^{\sigma_i}  ) \cong (\Z/2)^a$ is a group (here $a$ is the number of path components of $\Sigma^{\sigma}$), it follows that the path components of $H$ are pair-wise homotopy equivalent.
\end{proof}

Consider now the stable version of (\ref{arcoelrb})
\begin{equation}\label{aolrecubalrcoi}
\xymatrix{ B\G(\hat{g},n; \tau_1,...,\tau_n) \ar[r] \ar[d] & B\tilde{\G}(\hat{g}, n; \tau_1,...,\tau_n) \ar[d]^{\pi}\\
BU\ar[r] &  BL_0U }
\end{equation}
where we set all Stieffel-Whitney classes to zero. We are reduced to showing that the EMSS associated to \ref{aolrecubalrcoi} collapses.

\begin{lem}
The EMSS associated to (\ref{aolrecubalrcoi}) collapses at $EM_2$ if and only if the morphism
\begin{equation} \label{laoelrucabo}
H^*(B\tilde{\G}(\hat{g}, n; \tau_1,...,\tau_n) ) \rightarrow H^*(B\G(\hat{g},n; \tau_1,...,\tau_n))
\end{equation} is injective.
\end{lem}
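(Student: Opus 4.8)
The plan is to identify both conditions with the survival of a single distinguished object in two spectral sequences that each converge to $H^*(B\G(\hat g,n;\tau_1,\ldots,\tau_n))$, and then play the two against each other. First I would record the formal observation behind the ``only if'' direction. By Lemma \ref{aloerublaroceb} the $E_2$-term is $EM_2^{*,*} = \Gamma(z_2,z_3,\ldots)\otimes H^*(B\tilde\G(\hat g,n;\tau_1,\ldots,\tau_n))$, with $H^*(B\tilde\G)$ occupying the zeroth column and the $z_k$ sitting in column $-1$; since the spectral sequence lives in the second quadrant, the zeroth column supports no outgoing differential, so the edge homomorphism factors as $H^*(B\tilde\G)=EM_2^{0,*}\twoheadrightarrow EM_\infty^{0,*}\hookrightarrow H^*(B\G)$, and this composite is precisely the map (\ref{laoelrucabo}) induced by $B\G\to B\tilde\G$. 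Hence if the EMSS collapses at $EM_2$ the first arrow is an isomorphism and (\ref{laoelrucabo}) is injective; and conversely (\ref{laoelrucabo}) is injective exactly when no nonzero differential has image in the zeroth column.

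For the ``if'' direction the plan is as follows. Since (\ref{aolrecubalrcoi}) is a homotopy pullback, $B\G\to B\tilde\G$ has the same homotopy fibre as $BU\to BL_0U$; a short computation with homotopy groups via Bott periodicity identifies this fibre, in the stable range, with $BU$, giving a fibration $BU\to B\G\xrightarrow{p}B\tilde\G$ in which $p^*$ is the map (\ref{laoelrucabo}). I would then run the Serre spectral sequence of this fibration; its coefficient system is trivial because every graded ring automorphism of $H^*(BU;\Z/2)=\Z/2[c_1,c_2,\ldots]$ is the identity, so $E_2=H^*(B\tilde\G)\otimes H^*(BU)$. Assuming (\ref{laoelrucabo}) injective, the bottom row $E_*^{*,0}$ survives; the standard argument for a fibre with polynomial $\Z/2$-cohomology concentrated in even degrees --- each generator $c_k$ can only support a transgression-type differential into the bottom row, which is ruled out, so all $c_k$, hence by multiplicativity the whole fibre, are permanent cycles --- then forces this Serre spectral sequence to collapse, so $P_t(B\G)=P_t(B\tilde\G)\cdot P_t(BU)$. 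Finally, using $\Gamma_{\Z/2}(z)\cong\wedge(\gamma_{2^j}(z):j\ge0)$, the binary identity $\prod_{j\ge0}(1+x^{2^j})=1/(1-x)$, and $|z_k|=2k-2$, one computes $P_t(\Gamma(z_2,z_3,\ldots))=\prod_{k\ge2}1/(1-t^{2k-2})=\prod_{m\ge1}1/(1-t^{2m})=P_t(BU)$, whence $P_t(EM_2)=P_t(B\tilde\G)\cdot P_t(\Gamma(z_2,z_3,\ldots))=P_t(B\G)=P_t(EM_\infty)$; as $EM_\infty$ is a subquotient of $EM_2$ this forces collapse at $EM_2$.

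The main obstacle is this ``if'' direction, and within it the collapse of the auxiliary Serre spectral sequence of $BU\to B\G\to B\tilde\G$ out of the mere survival of its bottom row: this rests on the multiplicative structure together with the even, polynomial nature of $H^*(BU;\Z/2)$, and one must take care that the relevant homotopy fibre really is $BU$ in the stable limit and that the monodromy is trivial (the latter handled by the rigidity of graded ring automorphisms of $H^*(BU;\Z/2)$). A secondary point is the Poincaré-series bookkeeping matching $\Gamma(z_2,z_3,\ldots)$ with $H^*(BU)$. Alternatively, if the review in Appendix \ref{EM spec seq} supplies the fact that the EMSS differentials are derivations compatible with divided powers, the Serre spectral sequence can be bypassed: $d_r$ kills $z_k$ and all of $H^*(B\tilde\G)$ for column reasons, so $d_r(\gamma_i(z_k))=d_r(z_k)\,\gamma_{i-1}(z_k)=0$, and the EMSS collapses directly.
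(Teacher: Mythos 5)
Your ``only if'' direction and the identification of (\ref{laoelrucabo}) with the edge map $EM_2^{0,*}\twoheadrightarrow EM_\infty^{0,*}\hookrightarrow H^*(B\G)$ agree with the paper. The gap is in the ``if'' direction, at the step where you collapse the Serre spectral sequence of $BU\to B\G\to B\tilde\G$: it is not true that a polynomial generator of the fibre ``can only support a transgression-type differential into the bottom row.'' A class $c_k\in E_r^{0,2k}$ supports potential differentials $d_r$ for all $2\le r\le 2k+1$, and only the last of these lands in the bottom row; the intermediate ones land in $E_r^{r,2k-r+1}$ with positive fibre degree. For example $d_3(c_2)$ could a priori equal $x\cdot c_1$ with $x\in H^3(B\tilde\G)$ (which is nonzero here), and neither evenness nor polynomiality of $H^*(BU;\Z/2)$, nor survival of the bottom row, nor the fact that $c_1$ is a permanent cycle, rules this out. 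This is exactly the difficulty the paper confronts: it is resolved not by degree bookkeeping but by the Hopf-algebra structure --- the EMSS of the pullback of H-spaces (\ref{aolrecubalrcoi}) is a spectral sequence of connected, commutative, cocommutative Hopf algebras, the first nonzero differential sends an indecomposable to a primitive (Lemma \ref{rocrdlroced;j}), and since decomposable primitives lie in the image of Frobenius every odd-degree primitive is indecomposable and hence lies in the zeroth column, contradicting injectivity of (\ref{laoelrucabo}). Your Serre-spectral-sequence route could be repaired by running this same primitive/indecomposable argument there (the fibration is one of H-spaces), but then it is essentially the paper's proof transplanted; as written, the ``standard argument'' you invoke does not exist. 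Your Poincar\'e-series bookkeeping $P_t(\Gamma(z_2,z_3,\dots))=P_t(BU)$ is correct but only becomes useful once the collapse is actually established.

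Two secondary points. Your proposed shortcut $d_r(\gamma_i(z_k))=d_r(z_k)\,\gamma_{i-1}(z_k)$ is not a consequence of multiplicativity and is not supplied by Appendix \ref{EM spec seq}: divided powers are not products of lower-order terms, and mod $2$ such compatibility statements are precisely the kind that fail (Kudo-type phenomena require Steenrod-operation corrections); moreover, if it held formally it would give unconditional collapse and render the lemma's equivalence empty, which is not how the argument can go. Also, your justification of trivial monodromy is wrong as stated: $\Z/2[c_1,c_2,\dots]$ has nontrivial graded ring automorphisms (e.g.\ $c_2\mapsto c_2+c_1^2$), so triviality of the $\pi_1(B\tilde\G)$-action on $H^*(\mathrm{fibre})$ would need a different argument. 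Working directly in the EMSS, as the paper does, avoids both the fibre identification and the monodromy question altogether.
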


\begin{proof}

By the stable version of Lemma \ref{aloerublaroceb}, we have an isomorphism of bigraded algebras 

\begin{equation}\label{aoleublrc}
EM_2^{*,*} \cong \Gamma(z_2,z_3,...) \otimes H^*(B \tilde{\G}(\hat{g}, n; \tau_1,...,\tau_n)).
\end{equation}
In spectral sequence terms, we want to show that $EM_2= EM_{\infty}$ if and only if the column $EM_2^{0,*} =  1 \otimes H^*(B \tilde{\G}(\hat{g}, n; \tau_1,...,\tau_n)$ survives to infinity.  The ``only if" direction is clear.

Arguing in the same fashion as the proof of Lemma \ref{aolercuba}, we find that (\ref{aolrecubalrcoi}) is a pull-back diagram of H-spaces.  By (Smith \cite{smith1970lectures} chap. 2) $EM_*^{*,*}$ is a spectral sequence of (connected, commutative and cocommutative) Hopf algebras (we refer to  Milnor-Moore \cite{milnor1965structure} for background on Hopf algebras).

Suppose now that $EM_*$ does not collapse at $EM_2$.  Then for some  $r\geq 2$,  $EM_2^{*,*} = EM_r^{*,*}$ and the coboundary map $d_r$ is non-trivial.  According to Lemma \ref{rocrdlroced;j}, there must exist an indecomposable element $q \in EM_r$ and a non-zero primitive element $p \in P(EM_r)$ such that $d_r(q) = p$. By (\ref{aoleublrc}), all odd total degree indecomposables lie in the zeroth column and thus must be $d_r$-closed. It follows that $q$ must have even total degree and $p$ has odd total degree.  On the other hand, by \cite{milnor1965structure} (Prop. 4.21) decomposable primitives must lie in the image of the Frobenius morphism, hence have even degree.  Thus all odd degree primitives must be indecomposable, so $p$ must lie in the zeroth column $EM_2^{0,*}$. We deduce that  (\ref{laoelrucabo}) is not injective unless $EM_{\infty}^{*,*} = EM_2^{*,*}$.
\end{proof}

We are reduced to proving that (\ref{laoelrucabo}) is injective.  We begin with the genus zero case. Our strategy is to reverse the usual Atiyah-Bott argument by computing the Betti numbers of the real moduli space directly, and then using the recursive formula to compute $P_t(B\G^{\tau}_E)$.

Let $\M_{(\Sigma, \sigma)}(r,d,\tau) = C_{ss}(r,d,\tau)_{h\G_{\C}(r,d,\tau)}$ denote the topological moduli stack of rank $r$, degree $d$ real bundles of type $\tau$.
We consider two involutions $\sigma_a, \sigma_b: \C P^1 \rightarrow \C P^1$ where $\sigma_a$ fixes a circle and $\sigma_b$ has no fixed points (for example, in homogeneous coordinates $\sigma_a([z_1: z_2]) = [\bar{z}_1, \bar{z}_2]$ and $\sigma_b([ z_1, z_2 ]) = [-\bar{z}_2, \bar{z}_1]$). 
\begin{prop}
The moduli stacks satisfy homotopy equivalences
$$ \M_{(\C P^1, \sigma_a)}(r,d,\tau) \cong \M_{(\C P^1, \sigma_b)}(r,0,\tau)  \cong BO_r $$
$$ \M_{(\C P^1, \sigma_b)}(2r,2r,\tau) \cong  \M_{(\C P^1, \sigma_b)}(2r,-2r,\tau)  \cong BSp_{2r}. $$
\end{prop}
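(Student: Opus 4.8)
The plan is to identify each of these moduli stacks with $B\Aut$ of the essentially unique semistable real bundle of the given type, using Grothendieck's splitting theorem and Galois descent for the $\Z/2$-action. Over $\C P^1$ every holomorphic bundle is a sum of line bundles, so a semistable bundle of rank $r$ and degree $d$ exists if and only if $r\mid d$ (which we assume; otherwise all the stacks in question are empty), and then it is isomorphic to $\mathcal{O}(k)^{\oplus r}$ with $k=d/r$ and $\Aut(\mathcal{O}(k)^{\oplus r})\cong\GL_r(\C)$. Thus $C_{ss}(r,d)$ is a single $\G_{\C}(r,d)$-orbit $\cong\G_{\C}(r,d)/\GL_r(\C)$, as already used in \S\ref{cp1complex}. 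Fixing a $\tau$-invariant base point $\E_0$ (one exists since $\mathcal{O}(k)^{\oplus r}$ carries a holomorphic real structure of the unique smooth type over $\C P^1$, by Theorem~\ref{arecbu}), the fixed set $C_{ss}(r,d,\tau)=C_{ss}(r,d)^\tau$ breaks into $\G_{\C}(r,d,\tau)$-orbits indexed by the pointed set $\ker\big(H^1(\Z/2,\GL_r(\C))\to H^1(\Z/2,\G_{\C}(r,d))\big)$, where $\Z/2$ acts on $\GL_r(\C)=\Aut(\E_0)$ by conjugation by the real structure of $\E_0$. This set is a single point, because $H^1(\Z/2,\GL_r(\C))$ is trivial for each conjugation action that occurs — Hilbert~90 when the induced structure on the multiplicity space is of real type, and the classification of $\H$-modules by rank when it is of quaternionic type. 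Consequently $C_{ss}(r,d,\tau)\cong\G_{\C}(r,d,\tau)/\GL_r(\C)^\tau$ as a $\G_{\C}(r,d,\tau)$-space, and since the homotopy quotient of a homogeneous space $G/K$ by $G$ is homotopy equivalent to $BK$,
$$\M_{(\C P^1,\sigma)}(r,d,\tau)=C_{ss}(r,d,\tau)_{h\G_{\C}(r,d,\tau)}\cong B\big(\GL_r(\C)^\tau\big),$$
where $\GL_r(\C)^\tau$ denotes the fixed subgroup of the conjugation action.

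It then remains to compute $\GL_r(\C)^\tau$ in the four cases, which comes down to deciding whether $\mathcal{O}(k)$ admits a real or a quaternionic structure over $(\C P^1,\sigma)$ and is controlled by Theorem~\ref{arecbu}. Over $(\C P^1,\sigma_a)$ the real locus is a circle and $\mathcal{O}(k)$ always carries a real structure (for instance the one on the tautological bundle $\mathcal{O}(-1)\subset\C P^1\times\C^2$ induced by $(z_1,z_2)\mapsto(\bar z_1,\bar z_2)$), so every real structure on $\mathcal{O}(k)^{\oplus r}$ has the form $\tau_{\mathcal{O}(k)}\otimes\beta$ with $\beta$ a real structure on $\C^r$, and $\GL_r(\C)^\tau=\GL_r(\R)$ independently of $d$; the same holds over $(\C P^1,\sigma_b)$ when $d=0$, i.e. $k=0$. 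Over $(\C P^1,\sigma_b)$ with rank $2r$ and $d=\pm2r$ one has $k=\pm1$, which is odd, and $\Sigma^{\sigma_b}=\emptyset$, so $\mathcal{O}(\pm1)$ admits no real structure but does carry a quaternionic one $j$ (the lift $(z_1,z_2)\mapsto(-\bar z_2,\bar z_1)$ of $\sigma_b$ to $\mathcal{O}(-1)$ squares to $-\id$, and this propagates to all odd line bundles); a real structure on $\mathcal{O}(\pm1)\otimes\C^{2r}$ must then be $j\otimes j'$ with $j'$ a quaternionic structure on $\C^{2r}$, so $\GL_{2r}(\C)^\tau=\GL_r(\H)$. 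Since $\GL_r(\R)$ and $\GL_r(\H)$ deformation retract onto their maximal compact subgroups $O_r$ and $Sp_{2r}$ respectively, the four claimed homotopy equivalences follow.

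I expect the main obstacle to be the descent step — proving that $C_{ss}(r,d,\tau)$ is a \emph{single} $\G_{\C}(r,d,\tau)$-orbit rather than several. In the real case this is Hilbert~90 ($H^1(\Z/2,\GL_r(\C))$ trivial for the conjugation action, equivalently every real form of $\C^r$ is $\R^r$), whereas the quaternionic-twisted case rests on the slightly less standard fact that any two quaternionic structures on $\C^{2r}$ are conjugate under $\GL_{2r}(\C)$ (i.e. quaternionic vector spaces are classified by rank). A related point requiring care is the identification of $\G_{\C}(r,d,\tau)$-orbits with isomorphism classes of holomorphic real structures, which relies on the uniqueness over $\C P^1$ of the underlying $C^\infty$ real/quaternionic bundle — immediate from Theorem~\ref{arecbu}, since $H^1(\Sigma^\sigma;\Z_2)$ is either $0$ or $\Z_2$ and is pinned down by the degree. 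Granting these, the rest is routine bookkeeping with the deformation retractions onto maximal compact subgroups.
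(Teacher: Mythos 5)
Your proposal is correct and follows essentially the same route as the paper: both identify $\M_{(\C P^1,\sigma)}(r,kr,\tau)$ with $B\Aut$ of the essentially unique semistable real bundle $\mathcal{O}(k)^{\oplus r}$ (using Grothendieck splitting), then compute the automorphism group as $\GL_r(\R)$ resp. $\GL_r(\H)$ and retract onto $O_r$ resp. the compact symplectic group. The only differences are expository: your nonabelian $H^1(\Z/2,\GL_r(\C))$ vanishing makes explicit the single-orbit/uniqueness step that the paper deduces more briefly from Theorem \ref{arecbu}, and you treat the degree $\pm 2r$ case directly via the structure $j\otimes j'$ instead of tensoring by a quaternionic line bundle to reduce to degree zero, reaching the same conclusion.
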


\begin{proof}
Let $\E \rightarrow \C P^1$, be a semistable holomorphic bundle.  Then by \S \ref{cp1complex} we know $\E\cong \mathcal{O}(k)^{\oplus r}$ for some $k = \deg(\E)/r$, and $Aut(E) \cong GL_r(\C)$.  Combined with the topological classification of real bundles (Theorem \ref{arecbu}), we find that up to isomorphism there is at most one semistable real bundle of given rank and degree over $\C P^1$. It follows that 
$$\M_{(\C P^1, \sigma)}(r,kr,\tau) \cong BAut(r,kr,\tau) $$ 
where $Aut(r,kr,\tau) \subseteq Aut(\mathcal{O}(k)^{\oplus r}) \cong GL_r(\C)$ is the subgroup that commutes with the real involution. 

In the $\sigma_a$ case choose $p \in (\C P^1)^{\sigma_a}$.  Then we may model $\mathcal{O}(k) = \mathcal{O}(kp)$ as the sheaf of meromorphic functions with poles of order at most $k$ at $p$, with $\tau$ acting in the obvious way. The real subgroup $Aut(r,kr,\tau) \subseteq GL_r(\C)$ in this case is easily identified with $GL_r(\R)$.  

In the $\sigma_b$ and $k=0$ case, we have $\E = \C P^1 \times \C^r$ trivial and the isomorphism $GL_r(\C) = Aut(\E)$ can be understood acting in the standard way on the $\C^r$ factor and we have $Aut(\E,\tau) \cong GL_r(\R)$. In the case $\M_{(\C P^1, \sigma_b)}(2r,\pm 2r,\tau)$, tensoring by a degree $\pm 1$ quaternionic line bundle produces an isomorphism with the moduli space of rank $2r$ and degree $0$ quaternionic bundles on $\C P^1$, which by similar reasoning  has automorphism group $Sp_{r}(\C) \subseteq GL_{2r}(\C)$.  	
\end{proof}

\begin{lem}
Over a genus zero curve, the Poincar\'e polynomial of the classifying spaces of stable real gauge groups satisfy
$$ P_t(B\G(0,1;\tau_a)) =   \prod_{k=1}^{\infty}\frac{1}{ (1-t^{k})^2}.$$
$$ P_t(B\G(0,1;\tau_b)) = \prod_{k=1}^{\infty}   \frac{1+t^{2k-1}}{(1-t^{2k})^2}.$$
Consequently, the EMSS of Lemma \ref{aloerublaroceb} collapses in the genus zero case.
\end{lem}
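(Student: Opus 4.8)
The plan is to use the reverse Atiyah–Bott strategy already advertised in the text: rather than computing $P_t(B\G(0,1;\tau))$ from $P_t(B\tilde\G)$ via the (not-yet-established) collapse, I would instead pin down $P_t(B\G(0,1;\tau))$ independently, using the recursive formula (\ref{aolrcubalocre}) together with the explicit identification of the real moduli stacks over $\C P^1$ with $BO_r$ and $BSp_{2r}$ from the previous Proposition. Concretely: over $(\C P^1,\sigma_a)$ every semistable real bundle of rank $r$ and degree $0$ is the trivial bundle with $Aut(r,0,\tau)\cong GL_r(\R)\simeq O_r$, so $C_{ss}(r,0,\tau)_{h\G_\C(r,0,\tau)}\simeq BO_r$ and hence $P_t^{\G_\C(r,0,\tau)}(C_{ss}(r,0,\tau)) = P_t(BO_r) = \prod_{k=1}^r \frac{1}{1-t^k}$. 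Similarly in the $\sigma_b$ case the relevant automorphism group (after tensoring by a quaternionic line bundle) is $Sp_r(\C)\simeq Sp_r$, giving $P_t(BSp_{r}) = \prod_{k=1}^r\frac{1}{1-t^{4k}}$ for the rank $2r$ stable series.

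The key computation is then to feed these into the recursion. Over genus zero, the unstable real HN-strata are forced to be direct sums of line bundles (by Grothendieck, as recalled in \S\ref{cp1complex}), so the recursive formula (\ref{aolrcubalocre}) becomes a completely explicit sum over sequences $k_1>\dots$ weighted by the real normal-bundle codimensions $d_\lambda$ and by products of lower-rank stable series — which we have just evaluated in closed form. Taking the stable ($r\to\infty$) limit, exactly as in the derivation of (\ref{darinking})–(\ref{oeulracbo}) but now with the real correction factors ($t^{d_\lambda}$ instead of $t^{2d_\lambda}$, $BO_n$ or $BSp_n$ in place of $BU_n$), I would obtain a generating-function identity that evaluates the left-hand side $P_t(B\G(0,1;\tau_a)) = \prod_{k=1}^\infty \frac{1}{(1-t^k)^2}$ and $P_t(B\G(0,1;\tau_b)) = \prod_{k=1}^\infty \frac{1+t^{2k-1}}{(1-t^{2k})^2}$. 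The arithmetic here is the real/quaternionic analogue of (\ref{oeulracbo}) and should telescope in the same way.

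With these two Poincaré series in hand, the collapse of the EMSS of Lemma \ref{aloerublaroceb} in genus zero is immediate by a numerical (Euler-characteristic) argument: by Lemma \ref{aloerublaroceb}, $P_t(EM_2) = P_t(\Gamma(z_2,z_3,\dots))\cdot P_t(B\tilde\G(0,1;\tau))$, and $P_t(B\tilde\G(0,1;\tau))$ is known from Lemma \ref{oeracboel} (with $\hat g=0$, $n=1$). Comparing $P_t(EM_2)$ against the value of $P_t(B\G(0,1;\tau))$ just computed, the two must agree term-by-term — the spectral sequence can only decrease Betti numbers, and here there is nothing to lose — so $EM_2 = EM_\infty$.

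The main obstacle I anticipate is the bookkeeping in the stable-limit generating-function identity: getting the real normal-bundle dimension $d_\lambda = \sum_{i<j}(d_ir_j - d_jr_i) + r_ir_j(g-1)$ specialized correctly to $g=0$ with the real (half-dimensional) weighting, and correctly tracking which line-summand degrees survive into the stable limit (the analogue of the $|k_i|\le 1$ truncation), especially in the $\sigma_b$ case where the quaternionic condition (\ref{quatcond}) constrains the admissible strata and forces the symplectic groups to appear. Verifying that the resulting combinatorial sum really does telescope to the claimed products — rather than merely to something with the same first few coefficients — is where the real work lies; once that identity is checked, the collapse is a one-line corollary.
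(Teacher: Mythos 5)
Your proposal is correct and follows essentially the same route as the paper: identify the genus-zero semistable real/quaternionic moduli stacks with $BO_n$ and $BSp_n$, feed these into the recursion (\ref{aolrcubalocre}) in the stable limit (only the $|k_i|\le 1$ strata survive), evaluate using the identities (\ref{darinking}) and (\ref{oeulracbo}) (with $t^2\mapsto t$, resp.\ $x=t^4$), and then deduce collapse by comparing the resulting series with $P_t(EM_2)$ from Lemmas \ref{aloerublaroceb} and \ref{oeracboel}. The only difference is that you present the generating-function identity as still to be verified, whereas the paper has already established it in \S\ref{cp1complex}.
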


\begin{proof}
As explained in  \S \ref{cp1complex}, in the stable limit $r \rightarrow \infty$ the only contributions to the recursive formula
are Harder-Narasimhan strata of the form $((n,n), (r-2n,0), (n,-n))$. In the $\tau_a$ case, the recursive formula (\ref{aolrcubalocre}) gives,
\begin{eqnarray*} P_t(B\G(0,1;\tau_a)) 
	&=& \sum_{n=0}^{\infty} t^{n^2} P_t(BO_n)^2 P_t( \lim_{r\rightarrow \infty} BO_{r-2n})\\
	 &= & P_t(BO) \sum_{n=0}^{\infty} t^{n^2}P_t(BO_n)^2 \\
	 &=&    \Big( \prod_{k=1}^{\infty} \frac{1}{1-t^{k}}\Big)\sum_{n=0}^{\infty}  t^{n^2}\prod_{k=1}^n \frac{1}{(1-t^{k})^2}\\
	 & = & \prod_{k=1}^{\infty}\frac{1}{ (1-t^{k})^2}.  \end{eqnarray*}
	 where the last equality is deduced from (\ref{darinking}) by replacing $t^2$ by $t$.  
For the $\tau_b$ case the formula (\ref{aolrcubalocre}) is altered by the fact that real bundles only exist in even degree and consequently only HN-strata of the form $((2n,2n), (2r-4n,0), (2n,-2n)) $ contribute. In this case, (\ref{aolrcubalocre}) gives
\begin{eqnarray*} P_t(B\G(0,1;\tau_b)) 
	 &= &P_t(BO)\sum_{n=0}^{\infty} t^{4n^2} P_t(BSp_n)^2  \\
	& =&    \Big( \prod_{k=1}^{\infty} \frac{1}{1-t^k}\Big)\Big( \sum_{n=0}^{\infty}  t^{4n^2}  \prod_{k=1}^n\frac{1}{ (1-t^{4k})^2} \Big) \\
& = &\Big( \prod_{k=1}^{\infty} \frac{1+t^k}{1-t^{2k}}\Big) \Big( \prod_{k=1}^{\infty}\frac{1}{ 1-t^{4k}} \Big)
\\& =&  \prod_{k=1}^{\infty} \frac{1+t^k}{(1-t^{2k})^2(1+t^{2k})}\\
& =&  \prod_{k=1}^{\infty}   \frac{1+t^{2k-1}}{(1-t^{2k})^2}
\end{eqnarray*}
where we have employed the identity (\ref{oeulracbo}) with $x = t^4$. 
\end{proof}

Consider now the wedge product of surfaces $ Y := \Sigma(\hat{g}, 0 ) \vee (  \vee_n  \Sigma(0,1)) $ where we choose base points not lying on boundaries. Here $\Sigma(\hat{g},0)$ is the closed surface of genus $\hat{g}$ and $\Sigma(0,1)$ is a disk. Because $Y$ has $n$ boundary circles coming from the $n$ copies of $\Sigma(0,1)$, we can define by analogy with (\ref{alorecbulrcoeb}) the group $\G_Y^{\tau}$ via the pull-back diagram

$$\xymatrix{\G_Y^{\tau} \ar[d] \ar[r]&  Maps_0(Y, U_r) \ar[d]\\ \prod_{i=1}^n LU_r^{\tau_i} \ar[r] & \prod_{i=1}^n LU_r  } .$$

We fit $Y$ into a commutative diagram of spaces
$$ \xymatrix{  Y = \Sigma(\hat{g}, 0 ) \vee (  \vee_n  \Sigma(0,1))  & \Sigma(\hat{g}, n) \ar[l] \\    \Sigma(\hat{g}, 0) \sqcup ( \sqcup_{n} \Sigma(0,1) )  \ar[u]& X \sqcup (\sqcup_n S^1) \ar[l]  \ar[u] }.$$
where as before $X$ is the surface $\Sigma(\hat{g}, n)$ with a disk removed. These maps of surfaces induce homomorphisms of gauge groups and ultimately a commuting diagram
$$ \xymatrix{   H^*(B\G_Y^{\tau})  & H^*(B\G(\hat{g}, n; \tau_1,...,\tau_n)) \ar[l] \\    H^*(B\G(\hat{g}, 0)) \otimes H^*( \prod_{i=1}^n B\G(0,1; \tau_i) ))  \ar[u]^{\phi_2}& H^*(BMaps_0(X,U)) \otimes H^*(\prod_{i=1}^n BLU^{\tau_i})) \ar[l]^{\phi_1}  \ar[u]^{f}  }$$
By Remark \ref{etrgfffre}, the image of $f$ coincides with the image of $(\ref{laoelrucabo})$. Thus, to prove that $(\ref{laoelrucabo})$ is injective, it suffices to prove the following lemma.

\begin{lem} The Poincar\'e series of the image of $\phi_2 \circ \phi_1$ is equal to the Poincar\'e series $P_t(B\tilde{\G}(\hat{g},n;\tau_1,...,\tau_n))$.
\end{lem}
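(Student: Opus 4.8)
The plan is to compute $\im(\phi_2\circ\phi_1)$ explicitly as a subalgebra of $H^*(B\G_Y^\tau)$ and then match Poincar\'e series. One inequality is free: since the square commutes, $\im(\phi_2\circ\phi_1)$ is the image under the top horizontal map $g^*\colon H^*(B\G(\hat g,n;\tau_1,\dots,\tau_n))\to H^*(B\G_Y^\tau)$ of $\im(f)$; by Remark~\ref{etrgfffre}, $\im(f)$ equals the image of (\ref{laoelrucabo}) out of $H^*(B\tilde\G(\hat g,n;\tau_1,\dots,\tau_n))$, so in each degree $P_t(\im(\phi_2\circ\phi_1))\le P_t(\im(f))\le P_t(B\tilde\G(\hat g,n;\tau_1,\dots,\tau_n))$. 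The content of the lemma is therefore the reverse inequality; proving it forces all of these to be equalities, and in particular $P_t(\im(f))=P_t(B\tilde\G(\hat g,n;\tau_1,\dots,\tau_n))$, which is exactly what makes (\ref{laoelrucabo}) injective.

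To get the reverse inequality I would first pin down the two maps. Because each disk $\Sigma(0,1)$ is contractible and contributes only its boundary condition, $B\G_Y^\tau$ is the iterated homotopy fibre product over $U_r$ (evaluation at the wedge point) of $B\G(\hat g,0)$ and the spaces $B\G(0,1;\tau_i)$. Tracing the surface maps of the diagram through the naturality of the slant operator $t$ (Proposition~\ref{aeoucgxalbrcu}): $\phi_1=\phi_1^{(1)}\otimes\phi_1^{(2)}$, where $\phi_1^{(1)}$, induced by collapsing the boundary circles of $X$, annihilates the boundary generators $\bar c_{i,k'}$ and carries the remaining generators onto the subalgebra $S(c_k)\otimes A\subseteq H^*(B\G(\hat g,0))$ spanned by the Chern classes $c_k$ and the $2\hat g$ handle classes $a_{j,k}$ ($|a_{j,k}|=2k-1$), while $\phi_1^{(2)}=\bigotimes_i\bigl(H^*(BLU^{\tau_i})\to H^*(B\G(0,1;\tau_i))\bigr)$ is restriction to the boundary. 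Since the $i$-th boundary circle carries the wedge point, $\phi_2$ composed with $H^*(BLU^{\tau_i})\to H^*(B\G(0,1;\tau_i))$ is the direct map $H^*(BLU^{\tau_i})\to H^*(B\G_Y^\tau)$. Hence $\im(\phi_2\circ\phi_1)$ is precisely the subalgebra of $H^*(B\G_Y^\tau)$ generated by the images of the $a_{j,k}$ and $c_k$ and of all of $H^*(BLU^{\tau_i})$, $i=1,\dots,n$.

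Next I would carry out the count. By Propositions~\ref{robelurcbe} and~\ref{ealrcboeruc}: a circle with $\sigma_i=-Id_{S^1}$ contributes, via $H^*(BLU^{\tau_i})\cong H^*(U)\otimes S(c_k)$, one new exterior class in each degree $2k-1$ (its Chern classes being identified with the standing ones, as all the evaluation maps factor through a single $BU_r$); a circle with $\sigma_i=Id_{S^1}$ contributes, via $H^*(BLU^{\tau_i})\cong H^*(SO)\otimes S(w_k)$, for each $k$ a square root $w_{i,k}$ of $c_k$ (so $|w_{i,k}|=k$ and $w_{i,k}^2=c_k$) together with a copy of $H^*(SO)$. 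The only relations these generators are forced to satisfy in $H^*(B\G_Y^\tau)$ are the identifications of the various $c_k$ and the equations $w_{i,k}^2=c_k$. Computing the Poincar\'e series of the algebra presented by just these relations -- each $\sigma_i=Id$ circle multiplies the factor $\prod_k(1-t^{2k})^{-1}$ by $\prod_k(1+t^k)$ through its square roots and by $P_t(SO)=\prod_k(1+t^k)$ through $H^*(SO)$, while each $\sigma_i=-Id$ circle and each handle multiplies by $\prod_k(1+t^{2k-1})$ -- yields exactly $\prod_{k\ge1}\frac{(1+t^k)^{2a}(1+t^{2k-1})^{2\hat g+n-a}}{1-t^{2k}}$, the stable value of $P_t(B\tilde\G(\hat g,n;\tau_1,\dots,\tau_n))$ from Lemma~\ref{oeracboel}.

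The hard part is justifying that this count is the correct lower bound, i.e. that the generators above are as free in $H^*(B\G_Y^\tau)$ as the forced relations allow, with no hidden relations. For this I would compute $H^*(B\G_Y^\tau)$: run the iterated Eilenberg--Moore spectral sequence of the fibre-product description over $H^*(BU_r)=S(c_k)$ and show it collapses, so that $H^*(B\G_Y^\tau)$ is the tensor product over $S(c_k)$ of $H^*(B\G(\hat g,0))$ and the $H^*(B\G(0,1;\tau_i))$. The collapse follows once each $H^*(B\G(0,1;\tau_i))$ is a free $S(c_k)$-module, which -- via the cohomological-triviality transfer of Lemma~\ref{ebarecbiro} -- reduces to the genus-zero instance of the collapse already established. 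Granted this, the subalgebra $\im(\phi_2\circ\phi_1)$ is read off as in the previous paragraph, the count applies, and together with the automatic upper bound the lemma follows. The main bookkeeping subtlety throughout is tracking which Chern classes get identified in the successive fibre products and checking that the degree-$k$ square roots $w_{i,k}$ genuinely survive, which is a matter of applying Propositions~\ref{robelurcbe}--\ref{ealrcboeruc} along the chain of maps.
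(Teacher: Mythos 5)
Your argument is correct in substance and ends with the same count as the paper, but you route the key structural step differently. The paper computes $P_t(\mathrm{Im}\,\phi_1)$ (boundary classes $\bar c_{i,k}$ die, $S(c_k)\otimes A$ survives, the loop-group factors inject by the genus-zero case) and then simply asserts that $\ker\phi_2$ is the ideal generated by the differences $c_k-c_{i,k}$; since these lie in $\mathrm{Im}\,\phi_1$, multiplying by $\prod_i\prod_k(1-t^{2k})$ gives the stated series. You instead propose to prove the equivalent fact $H^*(B\G_Y^{\tau})\cong H^*(B\G(\hat g,0))\otimes_{S(c_k)}\bigotimes_i H^*(B\G(0,1;\tau_i))$ by viewing $B\G_Y^{\tau}$ as an iterated homotopy fibre product (note: over $BU$, not $U_r$) and collapsing its Eilenberg--Moore spectral sequence, then reading off the subalgebra generated by $S(c_k)\otimes A$ and the images of the $H^*(BLU^{\tau_i})$; your count of that subalgebra, and your observation that only the lower bound is at issue, match the paper exactly. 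What your route buys is a justification of precisely the point the paper leaves implicit; what it costs is the extra input that each $H^*(B\G(0,1;\tau_i))$ is a free $S(c_k)$-module. Lemma \ref{ebarecbiro} does not deliver this (it concerns Serre fibrations, not the EMSS); instead use the genus-zero collapse itself: it gives a filtration of $H^*(B\G(0,1;\tau_i))$ with associated graded $\Gamma(z_{k'})\otimes H^*(B\tilde\G(0,1;\tau_i))$, the latter free over $S(c_k)$ by the proof of Lemma \ref{oeracboel}, and freeness of the associated graded lifts to freeness of the module. One further small correction: $\phi_1$ on the surface factor is induced by the inclusion $X\hookrightarrow\Sigma(\hat g,0)$ (restriction of gauge transformations), not by collapsing the boundary circles, though the cohomological effect you describe is the right one.
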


\begin{proof}
From Lemma \ref{aolercubalcroe} and Corollary \ref{orecubrobeui} , we know that 
$$P_t( BMaps_0(X,U) \times \prod_{i=1}^n BLU^{\tau_i}) = (1+t)^{-n}\prod_{k=1}^{\infty}\frac{(1+t^{2k-1})^{2\hat{g}+2n-a}(1+t^k)^{2a}}{(1-t^{2k})^{n+1}}  $$
The first morphism $\phi_1$ is the tensor product of the injections $H^*(BLU^{\tau_i}) \rightarrow H^*(B\G(0,1;\tau_i))$ and the map $H^*(BMaps_0(X, U)) \rightarrow H^*(B\G(\hat{g},0))$ induced by the inclusion of the punctured surface $X$ into the genus $g$ surface $\Sigma(\hat{g},0)$. This kills only the cohomology coming from the boundary loops (see Lemma \ref{oaarceublrc})
and we deduce that the image of $\phi_1$ has Poincar\'e series $$ P_t(Im(\phi_1)) = \prod_{k=1}^{\infty}\frac{(1+t^{2k-1})^{2\hat{g}+n-a}(1+t^k)^{2a}}{(1-t^{2k})^{n+1}} $$
Next the kernel of $\phi_2$ is generated as an ideal by the classes $c_{k} - c_{k,i}$ for $k=1,...,\infty$ and $i=1,...,n$. All of these classes lie in the image of $\phi_1$, so  $Im( \phi_2 \circ \phi_1)$ has Poincar\'e series
\begin{eqnarray*}P_t(Im( \phi_2 \circ \phi_1)) & = &P_t(Im(\phi_1)) \prod_{i=1}^n \prod_{k=1}^{\infty}(1-t^{2k}) \\ &=& \prod_{k=1}^{\infty}\frac{(1+t^{2k-1})^{2\hat{g}+n-a}(1+t^k)^{2a}}{(1-t^{2k})}
\end{eqnarray*}
which equals  $P_t(B\tilde{\G}(\hat{g},n;\tau_1,...,\tau_n))$ by Lemma \ref{oeracboel}.
\end{proof}

\section{Betti numbers of moduli spaces}\label{sect7}
Let $(E,\tau)\rightarrow (\Sigma,\sigma)$ be a $C^{\infty}$-real bundle and consider the short exact sequence
\begin{equation}\label{onetbunoeb}
 1\rightarrow C_2 \rightarrow \G^{\tau}_E \rightarrow \bar{\G}^{\tau}_E\rightarrow 1
\end{equation}
where $C_2$ is the subgroup of constant maps with value $\pm Id_{U_r}$.

\begin{lem}\label{osrecbuarsc}
If either	
\begin{itemize}
	\item the rank $r$ of $E$ is odd or,
	\item  $w_1(E^{\tau}) \neq 0$ in $H^1(\Sigma^{\sigma};\Z/2)$.
\end{itemize}
then (\ref{onetbunoeb}) splits to define an isomorphism $\G^{\tau}_E \cong C_2 \times \bar{\G}^{\tau}_E$.
In particular, if $\G^{\tau}_E$ acts on a finite type space $X$ such that $C_2$ acts trivially, then   
$$P_t^{\overline{\G}^{\tau}_E}(X) = (1-t) P_t^{\G^{\tau}_E}(X).$$	
\end{lem}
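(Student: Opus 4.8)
The plan is to deduce the Betti-number identity from the asserted splitting, then reduce the splitting to the existence of a suitable homomorphism to $\Z/2$, and finally construct that homomorphism separately in the two cases. For the first reduction: if $\G^\tau_E\cong C_2\times\bar\G^\tau_E$ and $C_2$ acts trivially on the finite-type space $X$, then the $\G^\tau_E$-action factors through $\bar\G^\tau_E$ and there is a homotopy equivalence $X_{h\G^\tau_E}\cong X_{h\bar\G^\tau_E}\times BC_2$; since $H^*(BC_2;\Z/2)=H^*(B\Z/2;\Z/2)$ has Poincar\'e series $(1-t)^{-1}$, the Kunneth theorem gives $H^*_{\G^\tau_E}(X;\Z/2)\cong H^*_{\bar\G^\tau_E}(X;\Z/2)\otimes H^*(B\Z/2;\Z/2)$, hence $P^{\G^\tau_E}_t(X)=(1-t)^{-1}P^{\bar\G^\tau_E}_t(X)$, which is the asserted identity. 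For the second reduction: because $C_2$ is central, any continuous homomorphism $\rho\colon\G^\tau_E\to C_2\cong\Z/2$ with $\rho|_{C_2}=\mathrm{id}$ — equivalently, with $\rho(-Id_{U_r})\neq 0$ — produces the splitting, since then $\G^\tau_E=C_2\times\ker\rho$ with $\ker\rho\xrightarrow{\ \sim\ }\bar\G^\tau_E$. So it suffices to build such a $\rho$ in each case.

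\emph{The case $r$ odd.} Taking fibrewise determinants defines a continuous homomorphism $\det\colon\G^\tau_E\to\G^\tau_{\det E}$ into the real gauge group of the real line bundle $(\det E,\det\tau)$, carrying $-Id_{U_r}$ to the constant $(-1)^r\,Id$; since $r$ is odd this identifies $C_2$ with the central subgroup $\{\pm Id\}\subset\G^\tau_{\det E}$, so it is enough to split this $\Z/2$ off the rank-one gauge group. For a real line bundle $(L,\tau_L)$ one identifies $\G^\tau_L$ with the abelian group $\{\phi\colon\Sigma\to U(1)\mid\phi\circ\sigma=\bar\phi\}$. The homotopy-class map $\phi\mapsto[\phi]$ is a homomorphism whose image is a subgroup of $H^1(\Sigma;\Z)$, hence free abelian, so it splits and one is reduced to its kernel $\{\,e^{if}\mid f\circ\sigma+f\in 2\pi\Z\,\}$. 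On this kernel $e^{if}\mapsto\tfrac1{2\pi}(f\circ\sigma+f)\bmod 2$ is a well-defined homomorphism to $\Z/2$ (replacing $f$ by $f+2\pi k$ changes the value by $2k$) sending the constant $-1=e^{i\pi}$ to the non-zero class; extending it by $0$ over the chosen splitting gives the required retraction of $\G^\tau_{\det E}$ onto $\{\pm Id\}$, and composing with $\det$ gives $\rho$.

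\emph{The case $w_1(E^\tau)\neq 0$.} The case $r$ odd being settled, we may assume $r$ even. Pick a circle component $S\subseteq\Sigma^\sigma$ on which $V:=E^\tau|_S$ is non-orientable and use the restriction homomorphism $\G^\tau_E\to\G(V)$, which sends $-Id_{U_r}$ to the constant gauge transformation $-Id_V$; it then suffices to produce $\rho_V\colon\G(V)\to\Z/2$ with $\rho_V(-Id_V)\neq 0$. Since $V$ is non-orientable, its $O_r$-frame bundle over $S\cong S^1$ has monodromy a reflection $\rho_0\in O_r\setminus SO_r$, and $V$ carries a $\mathrm{Pin}_r$-structure. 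For a gauge transformation $g$ with $\det g\equiv 1$ — a section of the associated $SO_r$-bundle of automorphisms — let $\varepsilon(g)\in\Z/2$ be the obstruction to lifting $g$ to a section of the associated $\mathrm{Spin}_r$-bundle. Because the class classifying the cover $\mathrm{Spin}_r\to SO_r$ lies in degree $1$ of the connected Hopf algebra $H^*(SO_r;\Z/2)$, it is primitive, which forces $\varepsilon$ to be a homomorphism; and a computation in the Clifford algebra — lifting $-Id_{\R^r}$ to the volume element $\omega=e_1\cdots e_r\in\mathrm{Spin}_r$ and $\rho_0$ to $e_1\in\mathrm{Pin}_r$, so that $e_1\,\omega\,e_1^{-1}=(-1)^{r-1}\omega$ — gives $\varepsilon(-Id_V)=(-1)^{r-1}=-1$ since $r$ is even. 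Finally, the determinant-one gauge transformations form an index-two normal subgroup of $\G(V)$, a complement being generated by the constant $\rho_0$ (which squares to the identity), and $\varepsilon$ is invariant under conjugation by $\rho_0$; hence setting $\rho_V=\varepsilon$ there and $\rho_V=0$ on the complement defines the desired homomorphism, and composition with restriction yields $\rho$.

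The reductions in the first paragraph and the hands-on rank-one computation are routine; I expect the main obstacle to be the case $w_1(E^\tau)\neq 0$, where one must show that the constant $-Id$ on an even-rank non-orientable bundle over the circle is not connected to the identity in its gauge group and is detected by a mod-$2$ homomorphism. The $\mathrm{Pin}$-theoretic construction sketched above is the cleanest route I see (equivalently, $\pi_0$ of the twisted loop group $L_{\mathrm{conj}\,\rho_0}O_r$ is elementary abelian of rank two, with $[-Id_V]$ its spinorial generator), but carefully verifying the homomorphism property, carrying out the Clifford computation, and checking that $\varepsilon$ is independent of the auxiliary trivialisation is where the real work lies; once both homomorphisms $\rho$ are in hand, the lemma follows formally.
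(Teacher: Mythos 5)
Your proposal is correct, and its skeleton (centrality of $C_2$, so it suffices to build a continuous homomorphism $\rho:\G^{\tau}_E\to\Z/2$ with $\rho(-Id)\neq 0$; then K\"unneth on $X_{h\G^{\tau}_E}\simeq BC_2\times X_{h\bar{\G}^{\tau}_E}$) is the same as the paper's, but the two key constructions are carried out by genuinely different mechanisms. For odd rank the paper simply takes the sign of the determinant of the gauge action on a (real) fibre, whereas you pass to the determinant real line bundle and build the invariant $e^{if}\mapsto\tfrac{1}{2\pi}(f\circ\sigma+f)\bmod 2$ by hand; your version is more laborious but does not presuppose a real point of $\Sigma$. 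In the even-rank, $w_1(E^{\tau})\neq 0$ case both arguments restrict to an invariant circle, i.e.\ to the twisted loop group $LU_r^{\tau}\cong L_gO_r$; the paper then shows $\pi_0(LU_r^{\tau})\cong(\Z/2)^2$ by combining the exact sequence $\pi_0(\Omega SO_r)\to\pi_0(LU_r^{\tau})\to\pi_0(O_r)$ with the computation $H^1(BLU_r^{\tau};\Z/2)\cong(\Z/2)^2$ from Proposition \ref{robelurcbe}, checks via the homotopy $-1\simeq\gamma\cdot(g\gamma g^{-1})$ that $[-1]$ maps to the generator of $\pi_1(SO_r)$, and only then gets the separating homomorphism abstractly from elementary abelianness of $\pi_0$. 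You instead construct the homomorphism directly as the obstruction to lifting a determinant-one gauge transformation through the fibrewise cover $\mathrm{Spin}_r\to SO_r$ of a Pin-structured non-orientable $V=E^{\tau}|_S$, with the Clifford computation $e_1\omega e_1^{-1}=(-1)^{r-1}\omega$ detecting $-Id_V$; this is the same geometric fact the paper exploits, but your route is independent of the $H^*(BLU_r^{\tau})$ computation (so the lemma becomes self-contained), at the price of the Pin/Spin bookkeeping you flag — in particular, when deducing the homomorphism property of $\varepsilon$ from primitivity of the class of $\mathrm{Spin}_r\to SO_r$ you should dispose of a possible cross-term pulled back from $H^1(S^1;\Z/2)$ (e.g.\ by normalizing with $\varepsilon(Id)=0$), and check $\varepsilon$ is invariant under conjugation by the constant reflection before extending by zero to the index-two complement. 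Conversely, the paper's counting argument yields the extra information $\pi_0(LU_r^{\tau})\cong(\Z/2)^2$, which your construction does not need and does not give.
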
	
	
\begin{proof}

Because $C_2 \subset \G^{\tau}_E$ is central, it suffices to prove that there is some homomorphism $\phi: \G^{\tau}_E \rightarrow \Z/2$ mapping $C_2$ isomorphically onto $\Z/2$. If $r$ is odd, then this can be accomplished simply by taking the determinant of the gauge group action at a fibre.  

It remains to consider the even rank case $r =2n$ and non-trivial $w_1(E^{\tau})$. Necessarily, $\Sigma^{\sigma}$ is non-empty. By factoring through the restriction to an invariant circle $\G^{\tau}_E \rightarrow LU_r^{\tau}$ we only need a homomorphism $LU_r^{\tau} \rightarrow \Z/2$ separating the constant loop $-1$ from the identity. In this case, we can use the model 
$$LU_r^{\tau} \cong L_gO_r = \{ \gamma:I \rightarrow O_r| \gamma(0) = g\gamma(2\pi)g^{-1}\}$$ where $g \in O_r$ has determinant $-1$.  This model determines a short exact sequence of groups
$$1\rightarrow \Omega SO_r \stackrel{i}{\rightarrow} LU_r^{\tau} \stackrel{\rho}{\rightarrow} O_r \rightarrow 1 $$
where $\rho(\gamma) = \gamma(0)$ and an exact sequence on $\pi_0$
\begin{equation}\label{Aoeulcobec}
 \pi_0(\Omega SO_r) \stackrel{i_*}{\rightarrow} \pi_0(LU_r^{\tau}) \rightarrow \pi_0(O_r)  
\end{equation}
where $\pi_0(\Omega O_r)$ and $\pi_0(O_r)$ are cyclic groups of order $2$. It follows that $\pi_0(LU_r^{\tau})$ has order at most four. On the other hand we have natural isomorphisms 

\begin{eqnarray*} Hom( \pi_0(LU_r^{\tau}),\Z/2) & =&  Hom( \pi_1(BLU_r^{\tau}),\Z/2)\\ & =& Hom(H_1(BLU_r^{\tau});\Z), \Z/2)\\& =& H^1(BLU_r^{\tau}, \Z/2)  \cong  (\Z/2)^2
\end{eqnarray*}
where the last isomorphism follows from Proposition \ref{robelurcbe}. We conclude that $\pi_0(LU_r^{\tau}) \cong (\Z/2)^2$, so it is enough to show that the constant loop $-1 \in L_gO_r$ does not lie in identity path component. By a homotopy extension argument, the $-1$ is homotopic to the concatenation $\gamma \cdot (g\gamma g^{-1})$ where $\gamma : I \rightarrow SO_n$ is any path in $SO_r$ with $\gamma(0) = 1$ and $\gamma(1) = -1$. But $\gamma \cdot (g\gamma g^{-1})$ represents the generator of $\pi_1(SO_r) = \pi_0(\Omega SO_r) =\Z/2$. Finally $i^*$ of (\ref{Aoeulcobec}) is injective, so $-1 \in LU_r^{\tau}$ does not lie in the identity component.

Finally, if (\ref{onetbunoeb}) and $\G_E^{\tau}$ acts on $X$ with $C_2$ acting trivially, then $ X_{h\G_E^{\tau}} = BC_2 \times X_{h\bar{\G}_E^{\tau}}$ and the identity of Poincar\'e series follows.
\end{proof}

We are now able to compute some Poincar\'e polynomials. To begin with a simple example, consider the case of rank $r=1$. In this case, all real bundles are semistable, so 

\begin{equation}\label{aoerculrci}
 P_t(M(1,d,\tau)) = (1-t) P_t(C_{ss}(1,d,\tau) )= (1-t) P_t(B\G(1,d,\tau) ) = (1+t)^g
\end{equation}
where in the last step we employ the formula $P_t(B\G(1,d,\tau)) = \frac{(1+t)^g}{1-t}$ . Of course, it is known since Gross-Harris \cite{gross1981real} that $M(1,d,\tau)$ is homeomorphic to $(S^1)^g$, so (\ref{aoerculrci}) is not new. Next, we consider rank two.

\begin{prop}
	Let $\Sigma$ be a genus $g$ real curve with $a >0$ real path components and set $b := a-1$. The moduli space $M(2,d, \tau)$ of real bundles of rank two, odd degree $d$ and fixed topological type has Poincar\'e series

\begin{equation}\label{ercaboecub}
P_t(M(2,d, \tau))= \frac{(1+t)^{g+b}(1+t^2)^{b}(1+t^3)^{g-b} - 2^bt^{g}(1+t)^{2g}}{(1-t)(1-t^2)}.
\end{equation}
\end{prop}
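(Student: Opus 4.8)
The plan is to apply the recursion (\ref{aolrcubalocre}) in rank two, using Theorem \ref{alcgjbrcd.} to supply the rank one and rank two inputs, and then to divide out by the scalar $\R^*$ as in \S\ref{sect7}. Since $d$ is odd we have $\gcd(2,d)=1$; moreover by Theorem \ref{arecbu} the parity condition forces $w_1(E^\tau)(\Sigma^\sigma)\equiv d\equiv 1\bmod 2$, so $w_1(E^\tau)\neq 0$ and Lemma \ref{osrecbuarsc} applies. Consequently, as in \S\ref{sect7} (using that the central $C_2$ acts trivially on $C_{ss}(2,d,\tau)$, cf.\ Lemma \ref{osrecbuarsc}), $P_t(M(2,d,\tau)) = (1-t)\,P_t^{\G(2,d,\tau)}(C_{ss}(2,d,\tau))$, and it suffices to compute the right-hand side from (\ref{aolrcubalocre}).

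First I would enumerate the unstable real Harder--Narasimhan strata. In rank two these are indexed by $\lambda = ((1,d_1,\tau_1),(1,d_2,\tau_2))$ with $d_1 > d_2$ and $d_1 + d_2 = d$; put $m := d_1 - d_2$, a positive odd integer. By Theorem \ref{arecbu} a choice of $\tau_1$ amounts to a choice of $w_1(D_1^{\tau_1}) \in H^1(\Sigma^\sigma;\Z/2)\cong(\Z/2)^a$ with $w_1(D_1^{\tau_1})(\Sigma^\sigma)\equiv d_1\bmod 2$, and then $\tau_2$ is forced by $w_1(D_2^{\tau_2}) = w_1(E^\tau) - w_1(D_1^{\tau_1})$, the parity condition for $(D_2,\tau_2)$ being automatically satisfied. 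Hence there are exactly $2^{a-1} = 2^b$ strata for each positive odd $m$. Since line bundles are always semistable, $C_{ss}(1,d_i,\tau_i) = C(1,d_i,\tau_i)$ is contractible, so the rank one factor in (\ref{aolrcubalocre}) is $P_t^{\G(1,d_i,\tau_i)}(C_{ss}(1,d_i,\tau_i)) = P_t(B\G(1,d_i,\tau_i)) = (1+t)^g/(1-t)$ by Theorem \ref{alcgjbrcd.} at $r=1$ (compare (\ref{aoerculrci})), independently of the topological type. Finally the real codimension $d_\lambda$ equals the real dimension of the normal fibre $H^1(\Sigma,\D_1^*\otimes\D_2)^{\tau_1^*\otimes\tau_2}$ of (\ref{alorcebualrco}), which is a real form of $H^1(\Sigma,\D_1^*\otimes\D_2)$; as $\mu(\D_1) > \mu(\D_2)$ the space $H^0(\Sigma,\D_1^*\otimes\D_2)$ vanishes, so Riemann--Roch gives $d_\lambda = d_1 - d_2 + g - 1 = m + g - 1$.

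With these ingredients the correction sum becomes
\begin{equation*}
\sum_{\lambda \neq (2,d,\tau)} t^{d_\lambda}\prod_{i=1}^2 P_t(B\G(1,d_i,\tau_i)) = 2^b\Big(\frac{(1+t)^g}{1-t}\Big)^{2}\sum_{\substack{m\geq 1\\ m\ \mathrm{odd}}} t^{m+g-1} = \frac{2^b\,t^g(1+t)^{2g}}{(1-t)^2(1-t^2)}.
\end{equation*}
I would then substitute this, together with the value of $P_t(B\G(2,d,\tau))$ obtained from Theorem \ref{alcgjbrcd.} (which after simplifying $1-t^4 = (1-t^2)(1+t^2)$ and writing $a = b+1$ equals $(1+t)^{g+b+2}(1+t^2)^b(1+t^3)^{g-b}/(1-t^2)^3$), into (\ref{aolrcubalocre}), multiply through by $(1-t)$, and combine the two resulting terms over the common denominator $(1-t)(1-t^2)$; this yields (\ref{ercaboecub}).

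The only substantive point is the enumeration of strata --- in particular verifying that in each degree splitting there are precisely $2^b$ compatible topological types and that the rank one Poincar\'e series is independent of them, both resting on Theorem \ref{arecbu}. Everything else is routine manipulation of rational functions, so I do not anticipate a serious obstacle.
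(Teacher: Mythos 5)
Your proposal is correct and follows essentially the same route as the paper: reduce to $(1-t)P_t^{\G(2,d,\tau)}(C_{ss}(2,d,\tau))$ via Lemma \ref{osrecbuarsc}, enumerate the $2^b$ real Harder--Narasimhan strata for each degree splitting $d_1>d_2$, each contributing $\bigl((1+t)^g/(1-t)\bigr)^2$ in codimension $m+g-1$, sum the geometric series, and subtract from $P_t(B\G(2,d,\tau))$ as given by Theorem \ref{alcgjbrcd.}. The only cosmetic differences are that the paper normalizes to $d=1$ by tensoring with a real line bundle, while you keep general odd $d$ and spell out the parity bookkeeping and the Riemann--Roch codimension count that the paper leaves implicit.
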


\begin{proof}
For simplicity, we set $d=1$. The remaining odd degrees cases are isomorphic by tensoring with a real line bundle.	
	
Because the rank and degree are coprime, the action of $\G(2,1,\tau)$ on $C_{ss}(2,1,\tau)$ has constant stabilizer $\Z/2$. Thus, according to Lemma \ref{osrecbuarsc}.
$$ P_t(M(2,1,\tau)) = P_t^{\bar{\G}(2,1,\tau)} (C_{ss}(2,1,\tau))  = (1-t)P_t^{\G(2,1,\tau)} (C_{ss}(2,1,\tau))$$

We wish to apply the recursive formula (\ref{aolrcubalocre}). Complex Harder-Narasimhan types are determined by a splitting $E = L_1 \oplus L_2$ into line bundles with $\deg(L_1) > \deg(L_2)$. For each such complex splitting of $E$, there are $2^{a-1} = 2^b$ real Harder-Narasimhan types determined by possible choices of Stieffel-Whitney numbers, and each higher stratum has Poincar\'e series $\Big(\frac{(1+t)^g}{1-t}\Big)^2$. The recursive formula becomes 
	
\begin{eqnarray*} P_t^{\G(2,1,\tau)}(C_{ss}(2,1,\tau))  &= & P_t(B\G(2,1,\tau)) - \sum_{i=1}^{\infty} t^{2i-1 +(g-1)} \Big(\frac{(1+t)^g}{1-t}\Big)^2\\
	&=& \frac{(1+t)^{g+b}(1+t^2)^b(1+t^3)^{g-b}}{(1-t)^2(1-t^2)} - \frac{2^b t^g(1+t)^{2g}}{(1-t)^2(1-t^2)} .
\end{eqnarray*}

\end{proof}

\begin{remark}
	If $(\Sigma, \tau)$ be a real curve of genus $g$, with $g+1$ real path-components, then (\ref{ercaboecub}) proves a conjectural formula due to Saveliev-Wang \cite{saveliev2010real}.
\end{remark}

For example, for a real curve of genus $g=2$, respectively $a= 1,2,3$, $P_t(M(2,1,\tau))$ equals
$$t^5 + 3t^4 + 4t^3 + 4t^2 + 3t + 1 $$
$$t^5 + 4t^4 + 7t^3 + 7t^2 + 4t + 1 $$
$$t^5 + 5t^4 + 10t^3 + 10t^2 + 5t + 1 $$

For a real curve of genus $g=3$, $a=1,2,3,4$, $P_t(M(2,1,\tau))$ equals
$$t^9 + 4t^8 + 8t^7 + 14t^6 + 21t^5 + 21t^4 + 14t^3 + 8t^2 + 4t + 1 $$
$$t^9 + 5t^8 + 13t^7 + 25t^6 + 36t^5 + 36t^4 + 25t^3 + 13t^2 + 5t + 1 $$
$$t^9 + 6t^8 + 19t^7 + 41t^6 + 61t^5 + 61t^4 + 41t^3 + 19t^2 + 6t + 1 $$
$$t^9 + 7t^8 + 26t^7 + 62t^6 + 96t^5 + 96t^4 + 62t^3 + 26t^2 + 7t + 1 $$

For rank $r$ greater than $2$, the calculation of $P_t(M(r,d,\tau))$ using recursion involves multiple iterated geometric series.

\begin{prop}
Let $\Sigma$ be a genus $g$ real curve with $a >0$ real path components and set $b := a-1$ and let $d$ be an integer relatively prime to $3$. The moduli space $M(3,d, \tau)$ of real bundles of rank three, degree $d$ and fixed topological type has Poincar\'e series
\begin{eqnarray*} P_t(M(3,d, \tau)) &=&  \frac{(1+t)^{g+b}(1+t^2)^{2b}(1+t^3)^g(1+t^5)^{g-b}}{(1-t)(1-t^2)^2(1-t^3)}\\ 
&&- 2^b\frac{t^{2g}(1+t)^{2g+b}(1+t^2)^b(1+t^3)^{g-b}}{t(1-t)^3(1-t^3)}	\\   
&&+ 4^b\frac{t^{3g}(1+t)^{3g}(1+t^2+t^4) }{t(1-t)^2(1-t^2)(1-t^6)}. \end{eqnarray*}
\end{prop}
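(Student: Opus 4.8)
The plan is to run the same machine that produced the rank one and rank two formulas. Since $r=3$ is odd, Lemma~\ref{osrecbuarsc} applies and gives $P_t(M(3,d,\tau)) = (1-t)\,P_t^{\G(3,d,\tau)}(C_{ss}(3,d,\tau))$, and the recursive formula~(\ref{aolrcubalocre}) expresses the right-hand side through $P_t(B\G(3,d,\tau))$ — which is furnished by Theorem~\ref{alcgjbrcd.} and is independent of $d$ — together with the equivariant Poincar\'e series of the semistable strata in ranks $1$ and $2$. Tensoring by a real line bundle shifts the rank~$3$ degree by multiples of $3$, and dualizing $(E,\tau)\mapsto(E^{*},\tau^{*})$ sends $d$ to $-d$, so it is enough to carry out the computation for $d=1$; the class $d\equiv 2\pmod 3$ then follows by duality.

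First I would assemble the lower rank inputs. In rank $1$ every real bundle is semistable, so $P_t^{\G(1,e,\tau)}(C_{ss}(1,e,\tau))=P_t(B\G(1,e,\tau))=(1+t)^{g}/(1-t)$. In rank $2$ I would run~(\ref{aolrcubalocre}) once more: the unstable real HN types are splittings $E\cong L_1\oplus L_2$ with $\deg L_1>\deg L_2$, each underlying complex degree splitting carrying exactly $2^{a-1}=2^{b}$ real refinements by Theorem~\ref{arecbu} (a Stiefel--Whitney class for $L_1$ subject to the parity constraint, the other determined), with real normal bundle of rank $(\deg L_1-\deg L_2)+(g-1)$. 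Since $\deg L_1-\deg L_2$ ranges over the positive odd or positive even integers according to the parity of $e$, summing the geometric series and invoking the $d$-independence of $P_t(B\G(2,\cdot))$ yields a closed form for $P_t^{\G(2,e,\tau)}(C_{ss}(2,e,\tau))$ depending only on $g$, $a$ and the parity of $e$: for odd $e$ it is the expression appearing in the proof of the rank two Proposition, and for even $e$ it is the same with $t^{g}$ replaced by $t^{g+1}$ in the subtracted term.

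Next I would enumerate the unstable real HN types of a rank~$3$, degree~$1$ bundle. Their underlying complex types fall into three families: $(a)$ three line bundles $(1,d_1),(1,d_2),(1,d_3)$ with $d_1>d_2>d_3$; $(b)$ $(1,d_1),(2,d_2)$ with $2d_1>d_2$; $(c)$ $(2,d_1),(1,d_2)$ with $d_1>2d_2$. The number of real refinements is uniform in each family — $4^{b}=(2^{a-1})^2$ for $(a)$ (choosing the Stiefel--Whitney classes of two of the three line summands), and $2^{b}$ for $(b)$ and $(c)$. From the rank formula for $N_\lambda^{\tau}$ in \S\ref{aorecbularcoj} the real codimensions are $2(d_1-d_3)+3(g-1)$, $3d_1-1+2(g-1)$, and $d_1-2d_2+2(g-1)$ respectively. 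The delicate point is family~$(a)$: writing $p=d_1-d_2\ge 1$, $q=d_2-d_3\ge 1$, integrality of the $d_i$ together with $d_1+d_2+d_3=1$ forces $p+2q\equiv 1\pmod 3$, so the sum over $(a)$ is a congruence-restricted double geometric series rather than a free one; and in $(b)$ and $(c)$ the parity of the degree of the rank~$2$ summand alternates with $d_1$, so the parity-dependent rank~$2$ input from the previous step must be tracked through the summation.

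Finally I would substitute into~(\ref{aolrcubalocre}) and collect terms, multiplying through by $(1-t)$ at the end. The contributions proportional to $P_t(B\G(2,\cdot))$ come only from families $(b)$ and $(c)$ and assemble into the second, $2^{b}$-weighted summand of the stated formula. The remaining contributions proportional to $4^{b}$ — one from family $(a)$ and two from the unstable parts of the rank~$2$ factors in $(b)$ and $(c)$ — combine, via an identity of the shape $-t^{3g+3}+t^{3g+1}(1+t^2)+t^{3g-1}(1+t^4)=t^{3g-1}(1+t^2+t^4)$, into the third summand; and the term proportional to $1$ is simply $(1-t)P_t(B\G(3,d,\tau))$, which is the first summand. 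The main obstacle, as in any such recursive Atiyah--Bott computation, is precisely this bookkeeping: organizing the congruence-restricted and iterated geometric sums of family~$(a)$ and the parity-split sums of $(b)$ and $(c)$, and verifying that the three genuinely distinct $4^{b}$-contributions merge into the single compact term displayed.
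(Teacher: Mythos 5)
Your proposal is correct and follows exactly the route the paper intends, since its proof is precisely the "combinatorial exercise" of feeding the rank $1$ and rank $2$ semistable series into the recursion (\ref{aolrcubalocre}) and summing over the three families of real HN types; your codimension formulas, multiplicity counts ($4^b$ and $2^b$), the congruence-restricted sum in family (a), the parity-split rank $2$ input, and the final cancellation identity all check out and reproduce the stated formula after multiplying by $(1-t)$.
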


\begin{proof}
This is a combinatorial exercise.
\end{proof}

\begin{remark}
A combination of tensoring by real line bundles or dualizing produces a homeomorphism between any two real moduli spaces $M(3,d,\tau)$ and $M(3, d',\tau')$ for which $d$ and $d'$ relatively prime to $3$. This explains why the above formula is independent of degree and Stieffel-Whitney numbers.
\end{remark}

For example, for genus $g=2$ and $a=1,2,3$, $P_t(M(3,1,\tau))$ equals
$$t^{10} + 3t^9 + 6t^8 + 12t^7 + 17t^6 + 18t^5 + 17t^4 + 12t^3 + 6t^2 + 3t
    + 1 $$
$$	t^{10} + 4t^9 + 11t^8 + 25t^7 + 40t^6 + 46t^5 + 40t^4 + 25t^3 + 11t^2 +
	    4t + 1 $$ 
$$	t^{10} + 5t^9 + 17t^8 + 44t^7 + 78t^6 + 94t^5 + 78t^4 + 44t^3 + 17t^2 +
		    5t + 1 $$

\begin{remark}Liu and Schaffhauser (\cite{liu2011yang} section 6.2) have produced a closed formula for $P_t(M(r,d,\tau))$ for all $r$, $d$ and $\tau$ by solving the recursion relation.
\end{remark}

\begin{appendix}

\section{Review of the Eilenberg-Moore spectral sequence}\label{EM spec seq}\label{aolrecubque}

We summarize the relevant parts of section 7.1 of McLeary \cite{mccleary2001user}. Let $F \rightarrow E \stackrel{\pi}{\rightarrow} B$ be a fibre bundle with $F$ connected and $B$ is simply connected. Given a continuous map $f: X \rightarrow B$ we may form the pull-back fibre bundle

\begin{equation}\label{rloceuclrb}\begin{CD}
	\xymatrix{ E_f \ar[r] \ar[d] & E \ar[d]^{\pi}\\
	            X \ar[r]^f & B }
\end{CD}\end{equation}

The Eilenberg-Moore spectral sequence is a second quadrant spectral sequence of bigraded algebras $(EM_r^{p,q},\delta_r)$ converging strongly to an associated graded of $H^*(E_f)$ for which $$ E_2^{*,*} = Tor^{*,*}_{H^*(B)}(H^*(X),H^*(E)) $$ 
where $H^*(X)$ and $H^*(E)$ are $H^*(B)$-modules via $f^*$ and $\pi^*$. The boundary maps are bi-graded $\delta_r: EM_r^{p,q} \rightarrow EM_r^{p+r, q-r+1}$. 
\begin{lem}[\cite{mccleary2001user} Proposition 8.23]\label{ezero}
For the EMSS associated to the pull-back digram (\ref{rloceuclrb}), the column $EM_{\infty}^{0,*}$ may be identified with subalgebra of $H^*(E_f)$ generated by $\im(\pi^*)$ and $\im(f^*)$. 
\end{lem}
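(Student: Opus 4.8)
The plan is to deduce the statement from the multiplicative structure and strong convergence of the Eilenberg--Moore spectral sequence together with the standard identification of its edge homomorphism. Write $\bar\pi\colon E_f\to E$ and $\bar f\colon E_f\to X$ for the two projections in (\ref{rloceuclrb}); the subalgebra in question is the one generated by $\bar\pi^*H^*(E)$ and $\bar f^*H^*(X)$ inside $H^*(E_f)$.

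First I would record the shape of the zeroth column. Since $\delta_r\colon EM_r^{p,\ast}\to EM_r^{p+r,\ast}$ raises the homological degree by $r\geq 2$ while the spectral sequence is supported in columns $p\leq 0$, the column $p=0$ carries no outgoing differentials and receives differentials only from the column $p=-r$. As $\delta_r$ is a derivation vanishing on $EM_r^{0,\ast}$, the image of each such incoming $\delta_r$ is an ideal of $EM_r^{0,\ast}$, so $EM_{r+1}^{0,\ast}$ is a quotient \emph{ring} of $EM_r^{0,\ast}$; hence $EM_\infty^{0,\ast}$ is a quotient ring of $EM_2^{0,\ast}=\mathrm{Tor}^{0}_{H^*(B)}\big(H^*(X),H^*(E)\big)=H^*(X)\otimes_{H^*(B)}H^*(E)$.

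Next I would locate $EM_\infty^{0,\ast}$ inside $H^*(E_f)$. The decreasing, multiplicative filtration $F^\bullet H^*(E_f)$ has $F^p/F^{p+1}\cong EM_\infty^{p,\ast-p}$, which vanishes for $p\geq 1$; strong convergence, which the paper invokes, then forces $F^1H^*(E_f)=0$, so $EM_\infty^{0,\ast}=F^0H^*(E_f)$, which is a subalgebra of $H^*(E_f)$ because $F^0\cdot F^0\subseteq F^0$. Composing the surjection $EM_2^{0,\ast}\twoheadrightarrow EM_\infty^{0,\ast}$ with the inclusion $F^0H^*(E_f)\hookrightarrow H^*(E_f)$ produces a ring homomorphism $e\colon H^*(X)\otimes_{H^*(B)}H^*(E)\to H^*(E_f)$ with image exactly $F^0H^*(E_f)$. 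The standard description of the edge map of the Eilenberg--Moore spectral sequence identifies $e$ with the cup-product pairing $x\otimes y\mapsto \bar f^*(x)\smile\bar\pi^*(y)$, so $\bar f^*$ and $\bar\pi^*$ factor through $e$. Since $H^*(X)\otimes_{H^*(B)}H^*(E)$ is generated as a ring by $H^*(X)\otimes 1$ and $1\otimes H^*(E)$, its image $F^0H^*(E_f)=EM_\infty^{0,\ast}$ is generated as a ring by $\bar f^*H^*(X)$ and $\bar\pi^*H^*(E)$, which is the claim.

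The one point I expect to require genuine care is the identification of the edge homomorphism $e$ with the cup-product pairing of $\bar f^*$ and $\bar\pi^*$ --- equivalently, the compatibility of the algebra structure on $EM_2^{0,\ast}=\mathrm{Tor}^0$ with the cup product on $H^*(E_f)$ through all pages of the spectral sequence. This is built into the construction of the Eilenberg--Moore spectral sequence via a (co)bar model of $\mathrm{Tor}$ and the fact that it is a spectral sequence of algebras; the remaining steps are formal manipulations with a multiplicative, strongly convergent, second-quadrant spectral sequence. Since this is precisely \cite{mccleary2001user} Proposition~8.23, I would in practice just cite it, but the outline above is the substance.
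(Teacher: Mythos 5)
Your argument is correct: it is the standard edge-homomorphism proof — the zeroth column is the last one, so $EM_\infty^{0,*}$ is a quotient ring of $\mathrm{Tor}^0=H^*(X)\otimes_{H^*(B)}H^*(E)$ identified with $F^0=H^*(E_f)$'s bottom filtration stage via the cup product of the two pullbacks. The paper offers no argument of its own, citing exactly \cite{mccleary2001user} Proposition~8.23, so your proposal simply spells out the proof behind that citation and matches the paper's approach.
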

The EMSS is functorial with respect to morphisms of diagrams

\begin{equation}\label{alorecudcr} \xymatrix{ X \ar[r]  \ar[d]^{\phi} & B \ar[d]^{\phi}  & \ar[l] E \ar[d]^{\phi}  \\ X' \ar[r] & B' & \ar[l] E'  }\end{equation}
and the map on $EM_2$ is the standard algebraic map $$  Tor^{*,*}_{H^*(B')}(H^*(X'),H^*(E'))  \rightarrow Tor^{*,*}_{H^*(B)}(H^*(X),H^*(E)) $$ induced by the homomorphisms of cohomology rings $\phi^*$.

In case (\ref{rloceuclrb}) is a diagram of H-spaces, $EM_*^{*,*}$ becomes a spectral sequence of Hopf algebras as explained in Smith \cite{smith1970lectures} chapter 2. 

\begin{lem}[\cite{mccleary2001user} Lemma 7.13]\label{rocrdlroced;j}
If $(E_r, d_r)$ is a spectral sequence of Hopf algebras, then for each $r$, in the lowest degree that $d_r$ is non-trivial, it is defined on an indecomposable element and has as value a primitive element. 
\end{lem}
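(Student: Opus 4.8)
The plan is to derive the statement purely from the defining features of a spectral sequence of (graded, connected) Hopf algebras: on each page $d_r$ is a derivation for the product, a coderivation for the coproduct, it satisfies $d_r(1) = 0$, and it raises total degree by one. Fix $r$ and let $n$ be the smallest total degree in which $d_r$ is non-zero, so that $d_r$ vanishes identically in total degrees $\le n-1$. All degree arguments below use the total degree $p+q$, which the product and coproduct preserve.

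First I would check that $d_r$ annihilates decomposables in degree $n$. If $x$ is decomposable of total degree $n$, write $x = \sum_i a_i b_i$ with $|a_i|, |b_i| \ge 1$, hence both degrees $< n$; the Leibniz rule then gives $d_r(x) = \sum_i \bigl( d_r(a_i) b_i \pm a_i d_r(b_i) \bigr) = 0$ by minimality of $n$ (signs are immaterial in the $\Z/2$ setting of this paper). Hence $d_r$ descends to a map on the indecomposable quotient $Q(E_r)$ in degree $n$, which is therefore non-zero; choosing a lift produces an indecomposable element on which $d_r$ does not vanish. This is the content of the first assertion.

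Second I would check that every value taken by $d_r$ in this degree is primitive. Because $d_r$ is a coderivation with $d_r(1) = 0$, the reduced coproduct obeys $\bar\Delta(d_r x) = (d_r \otimes \mathrm{id} + \mathrm{id} \otimes d_r)(\bar\Delta x)$ for any $x$ of total degree $n$. Writing $\bar\Delta x = \sum x' \otimes x''$ with $|x'|, |x''| \ge 1$ and $|x'| + |x''| = n$, each tensor factor has degree $< n$, so $d_r$ kills it; thus $\bar\Delta(d_r x) = 0$ and $d_r x$ is primitive. Combining the two steps proves the lemma.

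I do not anticipate a real obstacle here; the only step requiring care is the bookkeeping with the bigrading — one must consistently work with the total degree $p+q$ rather than either of $p,q$ separately, and one must use $d_r(1) = 0$ when passing between the full and the reduced coproduct. One could alternatively deduce primitivity from the Milnor--Moore structure theory for Hopf algebras, but the coderivation computation above is the most direct route.
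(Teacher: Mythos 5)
Your argument is correct: since the paper itself gives no proof of this lemma (it is quoted from McCleary, Lemma~7.13), the relevant comparison is with the standard proof in that reference, and yours is exactly it — by minimality of the total degree $n$, the Leibniz rule kills $d_r$ on decomposables (so any element of degree $n$ with $d_r x \neq 0$ is automatically indecomposable), and the coderivation identity together with $d_r(1)=0$ and the vanishing of $d_r$ on the lower-degree factors of the reduced coproduct shows $\bar\Delta(d_r x)=0$, i.e.\ $d_r x$ is primitive. The only hypotheses you should make explicit are connectedness of the Hopf algebras (so the reduced coproduct terms have strictly positive, hence strictly smaller, total degree) and that the grading used throughout is the total degree, both of which hold in the paper's application in \S\ref{collapse}.
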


\end{appendix}

\bibliographystyle{amsalpha}
\bibliography{TomReferences}

\providecommand{\bysame}{\leavevmode\hbox to3em{\hrulefill}\thinspace}
\providecommand{\MR}{\relax\ifhmode\unskip\space\fi MR }
\providecommand{\MRhref}[2]{%
  \href{http://www.ams.org/mathscinet-getitem?mr=#1}{#2}
}
\providecommand{\href}[2]{#2}
\begin{thebibliography}{Mum62}

\bibitem[AB83]{ab2}
MF~Atiyah and R.~Bott, \emph{{The Yang-Mills equations over Riemann surfaces}},
  Philos. Trans. Roy. Soc. London Ser. A \textbf{308} (1983), no.~1505,
  523--615.

\bibitem[BHH10]{biswas2009moduli}
I.~Biswas, J.~Huisman, and J.C. Hurtubise, \emph{{The moduli space of stable
  vector bundles over a real algebraic curve}}, Mathematische Annalen
  \textbf{347} (2010), no.~1, 201--233.

\bibitem[Dol63]{dold1963partitions}
A.~Dold, \emph{Partitions of unity in the theory of fibrations}, The Annals of
  Mathematics \textbf{78} (1963), no.~2, 223--255.

\bibitem[GH81]{gross1981real}
B.H. Gross and J.~Harris, \emph{Real algebraic curves}, Ann. Sci. {\'E}cole
  Norm. Sup.(4) \textbf{14} (1981), no.~2, 157--182.

\bibitem[GH04]{goldin2004real}
RF~Goldin and TS~Holm, \emph{Real loci of symplectic reductions}, Trans. Amer.
  Math. Soc \textbf{356} (2004), no.~11, 4623--4642.

\bibitem[Gro57]{grothendieck1957classification}
A.~Grothendieck, \emph{Sur la classification des fibr{\'e}s holomorphes sur la
  sphere de riemann}, American Journal of Mathematics \textbf{79} (1957),
  no.~1, 121--138.

\bibitem[HN75]{hn}
G.~Harder and MS~Narasimhan, \emph{{On the cohomology groups of moduli spaces
  of vector bundles on curves}}, Mathematische Annalen \textbf{212} (1975),
  no.~3, 215--248.

\bibitem[LS13]{liu2011yang}
C.C.M. Liu and F.~Schaffhauser, \emph{{Yang-Mills equations over Klein
  surfaces}}, arXiv:1109.5164v3 (2013).

\bibitem[McC01]{mccleary2001user}
J.~McCleary, \emph{{A user's guide to spectral sequences}}, Cambridge
  University Press, 2001.

\bibitem[Mil56a]{milnor1956construction}
J.~Milnor, \emph{{Construction of universal bundles, I}}, The Annals of
  Mathematics \textbf{63} (1956), no.~2, 272--284.

\bibitem[Mil56b]{milnor1956construction2}
\bysame, \emph{{Construction of universal bundles, II}}, The Annals of
  Mathematics \textbf{63} (1956), no.~3, 430--436.

\bibitem[MM65]{milnor1965structure}
J.W. Milnor and J.C. Moore, \emph{On the structure of {H}opf algebras}, The
  Annals of Mathematics \textbf{81} (1965), no.~2, 211--264.

\bibitem[MS74]{milnor1974cc}
J.W. Milnor and J.D. Stasheff, \emph{{Characteristic Classes}}, Princeton
  University Press, 1974.

\bibitem[Mum62]{mumford2004projective}
D.~Mumford, \emph{Projective invariants of projective structures and
  applications}, Proceedings of the ICM (1962), 23.

\bibitem[PS60]{ps}
R.~Palais and T.~Stewart, \emph{{Deformations of compact differentiable
  transformation groups}}, Amer. J. Math \textbf{82} (1960), 935--937.

\bibitem[Sch11]{schaffhauser2009moduli}
F.~Schaffhauser, \emph{{Moduli spaces of vector bundles over a Klein surface}},
  Geom. Dedicata \textbf{151} (2011), no.~1, 187--206.

\bibitem[Sch12]{schaffhauser2010real}
\bysame, \emph{Real points of coarse moduli schemes of vector bundles on a real
  algebraic curve}, Journal of Symplectic Geometry \textbf{10} (2012), no.~4,
  503--534.

\bibitem[Smi70]{smith1970lectures}
L.~Smith, \emph{Lectures on the {E}ilenberg-{M}oore spectral sequence},
  Springer-Verlag, 1970.

\bibitem[SW10]{saveliev2010real}
N.~Saveliev and S.~Wang, \emph{On real moduli spaces of holomorphic bundles
  over {M}-curves}, Topology and its Applications \textbf{158} (2010), no.~3,
  344--351.

\end{thebibliography}

\end{document}